\numberwithin{equation}{section}
\numberwithin{table}{section}
\numberwithin{figure}{section}
\newtheorem{theorem}{Theorem}[section]
\newtheorem{lemma}[theorem]{Lemma}
\newtheorem{corollary}[theorem]{Corollary}
\newtheorem{definition}[theorem]{Definition}
\theoremstyle{definition}
\newtheorem{example}[theorem]{Example}
\newtheorem{remark}[theorem]{Remark}
\def\revddots{\mathinner{\mkern1mu\raise\p@
     \vbox{\kern7\p@\hbox{.}}\mkern2mu
     \raise4\p@\hbox{.}\mkern2mu\raise7\p@\hbox{.}\mkern1mu}}
\newcommand {\mat}  [1] {\left[\begin{array}{#1}}
\newcommand {\rix}      {\end{array}\right]}
\newcommand{\la}{\ensuremath{\lambda}}
\def\max{\mathop{\rm max}}
\def\rank{\mathop{\rm rank}}
\def\min{\mathop{\rm min}}
\def\det{\mathop{\rm det}}
\mathchardef\Gamma="7100
\mathchardef\Delta="7101
\mathchardef\Theta="7102
\mathchardef\Lambda="7103
\mathchardef\Xi="7104
\mathchardef\Pi="7105
\mathchardef\Sigma="7106
\mathchardef\Upsilon="7107
\mathchardef\Phi="7108
\mathchardef\Psi="7109
\mathchardef\Omega="710A
\newcommand{\bF}{\mathbb{F}}
\newcommand{\FF}{\mathbb{F}}
\newcommand{\RR}{\mathbb{R}}
\newcommand{\CC}{\mathbb{C}}
\newcommand{\BF}{\mathbb{F}}
\newcommand{\beq}{\begin{equation}}
\newcommand{\eeq}{\end{equation}}
\active\gdef@{\mkern1mu}}
\def\veps{\varepsilon}
\begin{document}

\title{Robustness and Perturbations of Minimal Bases}
\author{Paul Van Dooren\footnote{Department of Mathematical Engineering, Universit\'e catholique de Louvain, Avenue Georges Lema\^{i}tre 4, B-1348 Louvain-la-Neuve, Belgium.  Email: {\tt paul.vandooren@uclouvain.be}. Supported by the Belgian network DYSCO (Dynamical Systems, Control, and Optimization), funded by the Interuniversity Attraction Poles Programme initiated by the Belgian Science Policy Office.} \hspace{0.25cm} and \hspace{0.25cm} Froil\'an M. Dopico\footnote{Departamento de Matem\'aticas, Universidad Carlos III de Madrid, Avenida de la Universidad 30, 28911, Legan\'es, Spain.  Email: {\tt dopico@math.uc3m.es}. Supported by ``Ministerio de Econom\'{i}a, Industria y Competitividad of Spain'' and ``Fondo Europeo de Desarrollo Regional (FEDER) of EU'' through grants MTM-2015-68805-REDT, MTM-2015-65798-P (MINECO/FEDER, UE).}
               }
\date{\today}
\maketitle

\begin{abstract}
Polynomial minimal bases of rational vector subspaces are a classical concept that plays an important role in control theory, linear systems theory, and coding theory. It is a common practice to arrange the vectors of any minimal basis
as the rows of a polynomial matrix and to call such matrix simply a minimal basis.
Very recently, minimal bases, as well as the closely related pairs of dual minimal bases, have been applied to a number of problems that include the solution of general inverse eigenstructure problems for polynomial matrices, the development of new classes of linearizations and $\ell$-ifications of polynomial matrices, and backward error analyses of complete polynomial eigenstructure problems solved via a wide class of strong linearizations. These new applications have revealed that although the algebraic properties of minimal bases are rather well understood, their robustness and the behavior of the corresponding dual minimal bases under perturbations have not yet been explored in the literature, as far as we know. Therefore, the main purpose of this paper is to study in detail when a minimal basis $M(\la)$ is robust under perturbations, i.e., when all the polynomial matrices in a neighborhood of $M(\la)$ are minimal bases, and, in this case, how perturbations of $M(\la)$ change its dual minimal bases. In order to study such problems, a new characterization of whether or not a polynomial matrix is a minimal basis in terms of a finite number of rank conditions is introduced and, based on it, we prove that polynomial matrices are generically minimal bases with very specific properties. In addition, some applications of the results of this paper are discussed.
\end{abstract}

{\small
{\bf Key words.} backward error analysis, dual minimal bases, genericity, linearizations, $\ell$-ifications, minimal bases, minimal indices, perturbations, polynomial matrices, robustness, Sylvester matrices  \\

{\bf AMS subject classification.} 15A54, 15A60, 15B05, 65F15, 65F35, 93B18 }


\section{Introduction} \label{sect.intro}

Minimal bases, formed by vectors with polynomial entries, of a rational vector subspace were made popular in standard references of control theory and linear systems theory as those of Wolovich \cite{wolovich}, Forney \cite{For75}, and Kailath \cite{Kai80}, although all three of them cite earlier work for some theoretical developments on the so-called {\em minimal polynomial bases}. For instance, one can  read in \cite[p. 460]{Kai80} the following sentence: {\em I.C. Gohberg pointed out to the author that the significance of minimal bases was perhaps first realized by J. Plemelj in 1908 and then substantially developed in 1943 by N.I. Mushkelishvili and N.P. Vekua.} This means that this paper deals with an almost 110 years old classical mathematical notion. However, the discovery of the importance of this concept in applications had to wait until the 1970s, when the contributions of authors such as Wolovich, Forney, Kailath, and others, provided computational schemes for constructing a minimal basis from an arbitrary polynomial basis,
and showed the key role that this notion plays in multivariable linear systems.
These systems could be modeled by rational matrices, polynomial matrices,
or linearized state-space models, and had tremendous potential for solving analysis and design problems in control theory as well as in coding theory. The reader is referred to \cite{For75,Kai80} and the references therein for more information on minimal bases and their applications, and also to the brief revision included in the Section \ref{Sec:Invariants} of this paper. Moreover, many papers have been published after \cite{Kai80} on the computation and applications of minimal bases and some of them can be found in the references included in \cite{antoniou-min-bases}.

Very recently, minimal bases, and the closely related notion of pairs of dual minimal bases, have been applied to the solution of some problems that have attracted the attention of many researchers in the last fifteen years. For instance, minimal bases have been used (1) in the solution of inverse complete eigenstructure problems for polynomial matrices (see \cite{DDMV,DDV} and the references therein), (2) in the development of new classes of linearizations and $\ell$-ifications of polynomial matrices \cite{buenoREU,DDV-l-ifications,blockKron,BMBelifications,fassbendersaltenberger,lawrence-perez-cheby,robol},
which has allowed to recognize that many important linearizations commonly used in the literature are constructed via dual minimal bases (including the classical Frobenius companion forms), (3) in the explicit construction of linearizations of rational matrices \cite{amparanjointrat},
and (4) in the backward error analysis of complete polynomial eigenvalue problems solved via the so-called ``block Kronecker linearizations'' of polynomial matrices \cite[Section 6]{blockKron}, which include the interesting class of Fiedler linearizations (see \cite{buenoREU,blockKron} for references on this class of linearizations), but {\em do not include most of the linearizations and $\ell$-ifications that can be constructed from minimal bases} \cite{buenoREU,DDV-l-ifications,blockKron,BMBelifications,fassbendersaltenberger,lawrence-perez-cheby,robol}.
See also \cite{lawrence-vanbarel-vandooren} for additional references on the role played by dual minimal bases in backward error analyses.

The backward error analysis in \cite[Section 6]{blockKron} uses heavily the following two key ideas: (1) that the very particular minimal bases of degree one which are blocks of the block Kronecker linearizations are robust under perturbations, in the sense that all the polynomial matrices in a neighborhood of these minimal bases are also minimal bases with similar properties; and (2) that if these particular minimal bases are perturbed by a certain magnitude, then a particular choice of their dual minimal bases changes essentially by the same magnitude. These results have made clear that for extending the backward error analysis in \cite[Section 6]{blockKron} to much more general contexts that include many other classes of linearizations and also $\ell$-ifications of polynomial matrices, it is necessary to study in depth the robustness of general minimal bases and the behaviour of their dual minimal bases under perturbations, questions that have not been considered so far in the literature.

To solve the robustness and perturbation problems for minimal bases discussed in the previous paragraph is the main goal of this paper, which is achieved in Sections \ref{sec.smoothness} and \ref{sec.dualminbases}. We emphasize that the solution of these problems is based on a number of additional results that, in our opinion, are by themselves interesting contributions to the theory and applications of minimal bases. For instance, a new characterization of minimal bases in terms of Sylvester matrices is presented in Section \ref{sec.newchar}, and we prove in Section \ref{sec.genericity} that polynomial matrices are generically minimal bases with very specific properties that are encoded in the concept of polynomial matrices with full-Sylvester-rank, which are studied in Section \ref{sect:RankProp}. In Section \ref{sec.classrevisited}, the standard rank characterization of minimal bases is connected to the results in this paper. This work is completed with a discussion of some preliminary applications in Section \ref{sec.applications} and the conclusions are presented in Section \ref{sect.conclusions}.

\section{Preliminaries}
\label{Sec:Invariants}
The results in Sections \ref{Sec:Invariants}, \ref{sec.newchar}, and \ref{sect:RankProp} of this paper hold for an arbitrary field $\FF$, while in the remaining sections $\FF$ is the field of real numbers $\RR$ or of complex numbers $\CC$, which will be simultaneously denoted by $\FF$.
We adopt standard notations in the area:
$\FF[\la]$ denotes the ring of polynomials in the variable $\la$
with coefficients in $\FF$ and $\FF(\la)$ denotes the field of fractions of $\FF[\la]$, also known as the field of rational functions over $\bF$.
Vectors with entries in $\FF[\la]$ are called polynomial vectors. In addition, $\FF[\la]^{m\times n}$ stands for the set of $m\times n$ polynomial matrices, and $\FF(\la)^{m\times n}$ for the set of $m\times n$ rational matrices. The {\em degree} of a polynomial vector, $v(\la)$, or matrix, $P(\la)$, is the highest degree of all its entries and is denoted by $\deg(v)$ or $\deg(P)$. The rigorous definition of genericity we adopt requires to work inside the vector space over $\FF$ of $m \times n$ polynomial matrices with degree at most $d$, which is denoted by $\FF[\la]^{m\times n}_d$. Finally, $\overline \FF$ denotes the algebraic closure of $\FF$, $I_n$ the $n\times n$ identity matrix,
and $0_{m\times n}$ the $m\times n$ zero matrix, where the sizes will be omitted when they are clear from the context.

Polynomial matrices with size $m\times n$ and degree at most $d$ are represented in this paper in the monomial basis as $P(\la) = \sum_{i=0}^d P_i \la^i$, where $P_i \in \FF^{m\times n}$. If $P_d \ne 0$, then the  degree of $P(\la)$ is precisely $d$, that is $\mbox{deg} (P) =d$.
The rank of $P(\la)$ (sometimes called ``normal rank'') is just the rank of $P(\la)$ considered as a matrix over the field $\FF (\la)$,
and is denoted by $\mbox{rank} (P)$.
The finite eigenvalues of $P(\la)$ are the roots of its invariant polynomials,
and associated to each such eigenvalue are elementary divisors of $P(\la)$. These and the rest of concepts used in this paper can be found in the classical books \cite{Gan59,GohbergLancasterRodman09,Kai80}, as well as in the summaries included in \cite[Sect. 2]{DDM} and in \cite[Sect. 2]{DDMV}, which are recommendable since are brief and follow exactly the conventions in this paper.

Since ``minimal basis'' is the key concept of this paper, we revise its definition, characterization, main properties, and related notions. It is well known that every rational vector subspace $\mathcal{V}$,
i.e., every subspace $\mathcal{V} \subseteq \FF(\la)^n$ over the field $\FF(\la)$,
has bases consisting entirely of polynomial vectors. Among them some are minimal in the following sense introduced by Forney \cite{For75}: a {\em minimal basis} of $\mathcal{V}$ is a basis of $\mathcal{V}$ consisting of polynomial vectors whose sum of degrees is minimal among all bases of $\mathcal{V}$  consisting of polynomial vectors. The fundamental property \cite{For75,Kai80} of such bases is that the ordered list of degrees of the polynomial vectors in any minimal basis of $\mathcal{V}$ is always the same. Therefore, these degrees are an intrinsic property of the subspace $\mathcal{V}$ and are called the {\em minimal indices} of $\mathcal{V}$.
This discussion immediately leads us to the definition of the minimal bases and indices of a polynomial matrix. An $m\times n$ polynomial matrix $P(\lambda)$ with rank $r$ smaller than $m$ and/or $n$ has non-trivial left and/or right rational null-spaces, respectively,
over the field $\FF (\la)$:
\begin{eqnarray*}
  {\cal N}_{\ell}(P)\!\! &:=& \!\left\{y(\la)^T \in\BF(\la)^{1 \times m}
                           \,:\,y(\la)^T P(\la)\equiv0^T\right\},\\
                             {\cal N}_r(P)\!\! &:=& \!\left\{x(\la)\in\BF(\la)^{n \times 1}
                           \,:\, P(\la)x(\la)\equiv0\right\}.
\end{eqnarray*}
Polynomial matrices with non-trivial left and/or right null-spaces
are called {\em singular} polynomial matrices. If the rational subspace ${\cal N}_{\ell}(P)$ is non-trivial, it has minimal bases and minimal indices, which are called the {\em left minimal bases and indices} of $P(\la)$. Analogously, the {\em right minimal bases and indices} of $P(\la)$ are those of ${\cal N}_{r}(P)$, whenever this subspace is non-trivial.

The definition of minimal basis given above cannot be easily handled in practice. Therefore, \cite[p. 495]{For75} includes five equivalent characterizations of minimal bases. Among those, we present in Theorem \ref{minbasis_th} only the one we believe is the most useful in practice, since relies on the ranks of constant matrices. The statement of Theorem \ref{minbasis_th} requires to introduce first Definition \ref{colred}.
For brevity, we use the expression ``column (resp., row) degrees'' of a polynomial matrix to mean the degrees of its column (resp., row) vectors.

\begin{definition}\label{colred}
Let $d'_1,\ldots,d'_n$ be the column degrees of $N(\lambda) \in \FF[\la]^{m\times n}$.
The highest-column-degree coefficient matrix of $N(\la)$, denoted by $N_{hc}$,
is the $m\times n$ constant matrix whose $j$th column
is the vector coefficient of $\lambda^{d'_j}$ in the $j$th column of $N(\lambda)$.
The polynomial matrix $N(\la)$ is said to be \emph{column reduced} if $N_{hc}$ has full column rank.

Similarly, let $d_1,\ldots,d_m$ be the row degrees of $M(\la) \in \FF[\la]^{m\times n}$. The highest-row-degree coefficient matrix of $M(\la)$, denoted by $M_{hr}$,
is the $m\times n$ constant matrix whose $j$th row is the vector coefficient
of $\lambda^{d_j}$ in the $j$th row of $M(\lambda)$.
The polynomial matrix $M(\la)$ is said to be \emph{row reduced} if $M_{hr}$ has full row rank.
\end{definition}

Theorem \ref{minbasis_th} provides the announced characterization of minimal bases proved in \cite{For75}.

\begin{theorem}\label{minbasis_th}
  The columns {\rm (}resp., rows{\rm )} of a polynomial matrix $N(\lambda)$ over a field $\FF$
  are a minimal basis of the subspace they span
  if and only if $N(\lambda_0)$ has full column {\rm (}resp., row{\rm )} rank
  for all $\lambda_0 \in \overline \FF$,
  and $N(\la)$ is column {\rm (}resp., row{\rm )} reduced.
\end{theorem}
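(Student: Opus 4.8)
The plan is to prove the column version; the row version follows by transposing. The strategy is to connect the Forney definition of minimal basis (a polynomial basis whose sum of column degrees is smallest possible) with the two stated rank conditions, passing through the notion of column reducedness and the behaviour of column degrees under unimodular transformations.

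First I would recall the standard predivisor/factorization facts. Since the columns of $N(\la) \in \FF[\la]^{m \times n}$ are linearly independent over $\FF(\la)$ (being a basis), $N(\la)$ has rank $n$ and admits a factorization $N(\la) = \wh N(\la)\, U(\la)$, where $U(\la) \in \FF[\la]^{n\times n}$ is a matrix whose columns span the same rational subspace intersected with polynomial vectors --- more precisely, I would invoke the Smith form: there exist unimodular $V(\la) \in \FF[\la]^{m\times m}$ and $W(\la) \in \FF[\la]^{n \times n}$ with $V(\la) N(\la) W(\la) = \left[\begin{smallmatrix} \Sigma(\la) \\ 0 \end{smallmatrix}\right]$, $\Sigma(\la) = \diag(s_1(\la),\ldots,s_n(\la))$. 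The key observation is that $N(\la_0)$ has full column rank for all $\la_0 \in \ov\FF$ if and only if all the invariant polynomials $s_i(\la)$ are nonzero constants, i.e., $N(\la)$ has no finite elementary divisors; equivalently, $N(\la) x(\la)$ polynomial with $x(\la)$ rational forces $x(\la)$ polynomial. This last reformulation says exactly that the columns of $N(\la)$ generate (over $\FF[\la]$) the full module of polynomial vectors lying in the rational span --- a known equivalent form of "minimal basis up to column reducedness".

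Next I would handle the column-reducedness condition. The central lemma here (the predictable-degree property, Forney/Kailath) is: if $N(\la)$ is column reduced with column degrees $d'_1,\ldots,d'_n$, then for any polynomial vector $x(\la) = \sum_j x_j(\la) e_j$, one has $\deg(N(\la) x(\la)) = \max_j \{\deg(x_j) + d'_j\}$, because the highest-degree terms cannot cancel (that cancellation would be a nontrivial $\FF$-linear dependence among the columns of $N_{hc}$, contradicting full column rank). I would prove this directly by looking at the coefficient of the top power. From this it follows that among all polynomial matrices $\wt N(\la) = N(\la) Q(\la)$ with $Q(\la)$ unimodular spanning the same subspace, a column-reduced one minimizes the sum of column degrees: any other polynomial basis is $N(\la) R(\la)$ for some $R(\la) \in \FF[\la]^{n\times n}$ (using the module-generation property from the previous paragraph), with $R(\la)$ necessarily unimodular, and the predictable-degree property applied column by column gives $\sum \deg(\wt N_{\cdot j}) \ge \sum d'_j$. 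Conversely, if $N(\la)$ is \emph{not} column reduced, an explicit elementary column operation (add to a highest-column-degree column a scalar multiple --- times a suitable power of $\la$ --- of another, chosen from a linear dependence among the columns of $N_{hc}$) strictly lowers the sum of column degrees while keeping a basis of the same subspace, so $N(\la)$ is not minimal.

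Assembling: if both conditions hold, then $N(\la)$ generates the polynomial-vector module of its rational span (first condition) and is column reduced, hence has minimal column-degree sum among all polynomial bases of that span (second condition plus predictable-degree), so its columns form a minimal basis. Conversely, a minimal basis must, by the reduction argument, be column reduced; and it must have full column rank at every $\la_0\in\ov\FF$, since otherwise there is a finite elementary divisor, meaning some $s_i(\la)$ is nonconstant, and then dividing the relevant column through (after an appropriate unimodular change of the spanning set, which is legitimate because such division stays inside the subspace and inside $\FF[\la]^n$) yields a polynomial basis of the same subspace with strictly smaller degree sum --- contradiction. The main obstacle I anticipate is the careful bookkeeping in this last step: one must argue that removing a finite elementary divisor can be realized by a genuine polynomial column operation producing another \emph{basis} of the \emph{same} rational subspace with strictly smaller total degree, which requires the module-generation characterization and a clean handling of the Smith form; everything else is the (by now classical) predictable-degree computation.
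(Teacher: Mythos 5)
You should first note that the paper itself contains no proof of this theorem: it is quoted verbatim as Forney's classical characterization with a citation to [For75], so there is no internal argument to compare against. Your reconstruction is the standard Forney--Kailath proof (Smith form to show that ``full column rank at every $\la_0\in\overline{\FF}$'' is equivalent to the columns generating the full $\FF[\la]$-module of polynomial vectors in the rational span, plus the predictable-degree property to handle column reducedness), and the overall architecture is correct.

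Two steps need repair. First, ``the predictable-degree property applied column by column gives $\sum_j \deg(\widetilde{N}_{\cdot j})\ge \sum_j d'_j$'' is not literally immediate: from $\widetilde{N}=NR$ with $R$ unimodular you get $\deg \widetilde{N}_{\cdot j}=\max\{\deg R_{ij}+d'_i : R_{ij}\ne 0\}$, and to sum these you must either pick a permutation $\sigma$ with $\prod_j R_{\sigma(j)j}\ne 0$ (one exists because $\det R$ is a nonzero constant), which yields $\deg \widetilde{N}_{\cdot j}\ge d'_{\sigma(j)}$ and hence the claim, or run the ordered-degrees argument showing the $k$-th smallest column degree of $\widetilde N$ dominates that of $N$. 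Second, and more seriously, the step you yourself flag would fail as sketched: passing to the Smith form $VNW=\bigl[\begin{smallmatrix}\Sigma\\ 0\end{smallmatrix}\bigr]$ and ``dividing the relevant column through by $s_i$'' operates on the basis $NW$, whose degree sum may exceed that of $N$ by more than $\sum_i\deg s_i$, so the resulting basis need not beat $N$ and no contradiction with minimality is obtained. The correct move is elementary and local: take $\la_0\in\overline{\FF}$ with $\operatorname{rank}N(\la_0)<n$ and a constant $v\ne 0$ with $N(\la_0)v=0$; then $w(\la):=N(\la)v/(\la-\la_0)$ is a polynomial vector of degree strictly less than $d'_k:=\max\{d'_j: v_j\ne 0\}$, and replacing column $k$ by $w$ is an invertible rational column operation producing a polynomial basis of the same subspace with strictly smaller degree sum. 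Since this argument lives over $\overline{\FF}$, for general $\FF$ you must add that the minimal degree sum is invariant under extension of scalars (it is determined by ranks of constant matrices built from the coefficients---the paper's Sylvester-matrix machinery of Section 3 makes this explicit), so non-minimality over $\overline{\FF}$ implies non-minimality over $\FF$. With these two repairs the proposal is a complete and correct proof.
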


\begin{remark} \label{rem:NEWconvention}
In this paper we follow the convention in \cite{For75} and often say, for brevity, that a $p \times q$ polynomial matrix $N(\la)$ is a {\em minimal basis} if the columns (when $q<p$) or rows (when $p<q$) of $N(\la)$ form a minimal basis of the rational subspace they span.
\end{remark}

\begin{remark} \label{rem:unicity}
If $M(\la)\in \bF[\lambda]^{m\times k}$ is a row reduced polynomial matrix, then $M(\la)$ has full row (normal) rank. This can be seen as follows: let $d_i, i=1,\ldots,m$, be the row degrees of $M(\la)$ and let $S(\la)$ be an $m\times m$ submatrix of $M(\la)$ such that the corresponding submatrix of $M_{hr}$, denoted by $S_{hr}$, is nonsingular. Then, $\det S(\la) = \la^{d_1+\cdots+d_m} \, \det S_{hr} + \mbox{(lower degree terms in $\la$)} \ne 0$. This means that the rows of $M(\la)$ form a basis of the rational subspace $\mathcal{V}$ they span (which is the row space of $M(\la)$) and, by definition of minimal basis, the sum of its row degrees $\sum_{i=1}^m d_i$ is an upper bound for the sum of the minimal indices $\sum_{i=1}^m \eta_i$ of $\mathcal{V}$. Moreover, equality of the sums implies that $M(\la)$ is a minimal basis of $\mathcal{V}$ and that the ordered lists $\{d_i\}$ and $\{\eta_i\}$ are also equal, by the uniqueness of the minimal indices.
\end{remark}

Next, we introduce the concept of {\em dual minimal bases}, which has played a key role in a number of recent applications \cite{DDMV,DDV-l-ifications,blockKron,lawrence-perez-cheby,robol}.

\begin{definition}\label{dual-m-b}
  Polynomial matrices $M(\lambda)\in \bF[\lambda]^{m\times k}$
  and $N(\lambda)\in \bF[\lambda]^{n\times k}$ with full row ranks
  are said to be {\em dual minimal bases}
  if they are minimal bases satisfying $m+n=k$ and $M(\lambda) \, N(\lambda)^T=0$.
\end{definition}

The name ``dual minimal bases'' seems to be very recent in the literature, because \cite {DDMV} is the first reference where is used, to our knowledge. However, the origins of such concept can be found in \cite[Section 6]{For75}. According to \cite[Section 6]{For75}, dual minimal bases span rational vector subspaces of $\FF (\la)^k$ that are dual to each other. In the language of null-spaces of matrix polynomials, observe that $M(\lambda)$ is a minimal basis of ${\cal N}_\ell (N(\la)^T)$
and that $N(\lambda)^T$ is a minimal basis of ${\cal N}_r (M(\la))$.
As a consequence, the right minimal indices of $M(\la)$ are the row degrees of $N(\la)$ and the left minimal indices of $N(\la)^T$ are the row degrees of $M(\la)$.
Note that dual minimal bases have been defined with more columns than rows,
as in the classical reference \cite{For75}, although, one can also use matrices with more rows than columns in the definition. It follows from this discussion that for every minimal basis, there exists a minimal basis
that is dual to it. In addition, every minimal basis is a minimal basis of some matrix polynomial.

The next theorem is crucial for the rest of this paper. Its first part was proven in \cite{For75}, while the second (converse) part has been proven very recently in \cite{DDMV}.
\begin{theorem} \label{thm:dualbasis}
  Let $M(\la)\in \bF[\lambda]^{m\times (m + n)}$ and $N(\lambda) \in \bF[\lambda]^{n\times (m+n)}$
  be dual minimal bases with row degrees $(\eta_1,\hdots,\eta_m )$
  and $(\varepsilon_1,\hdots,\veps_{n} )$, respectively.
  Then
\begin{equation} \label{eqn.keyequality}
  \sum_{i=1}^m \eta_i \,=\, \sum_{j=1}^{n}\varepsilon_j \,.
\end{equation}
Conversely, given any two lists of nonnegative integers
$(\eta_1,\hdots,\eta_m)$ and $(\varepsilon_1,\hdots,\veps_{n})$
satisfying \eqref{eqn.keyequality},
there exists a pair of dual minimal bases $M(\la)\in \bF[\lambda]^{m\times (m+n)}$
and $N(\lambda) \in \bF[\lambda]^{n\times (m+n)}$
with precisely these row degrees, respectively.
\end{theorem}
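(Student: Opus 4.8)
The plan is to prove the two parts separately, both relying on Theorem~\ref{thm:dualbasis} (the dual-basis characterization) and Theorem~\ref{minbasis_th} (Forney's rank characterization). For the first (direct) part, assume $M(\la)$ and $N(\la)$ are dual minimal bases with row degrees $(\eta_1,\ldots,\eta_m)$ and $(\veps_1,\ldots,\veps_n)$. The key observation is that, as recalled in the discussion following Definition~\ref{dual-m-b}, $N(\la)^T$ is a minimal basis of $\mathcal{N}_r(M(\la))$ and $M(\la)$ is a minimal basis of $\mathcal{N}_\ell(N(\la)^T)$; hence the row degrees $(\veps_1,\ldots,\veps_n)$ of $N(\la)$ are the right minimal indices of $M(\la)$, and the row degrees $(\eta_1,\ldots,\eta_m)$ of $M(\la)$ are the left minimal indices of $N(\la)^T$. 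So \eqref{eqn.keyequality} is really an instance of the classical \emph{index sum theorem} for singular polynomial matrices: I would invoke it for the $m\times(m+n)$ matrix $M(\la)$, which has normal rank $m$ (by Remark~\ref{rem:unicity}, since $M(\la)$ being row reduced has full row rank), no finite or infinite elementary divisors (it is a minimal basis, so $M(\la_0)$ has full rank for all $\la_0\in\overline\FF$, and being row reduced kills the structure at infinity), and exactly $n$ right minimal indices summing to $\sum_j\veps_j$. The index sum theorem then forces $\sum_i\eta_i + \sum_j\veps_j = (\text{total degree contribution})$; but for a minimal basis with no elementary divisors the bookkeeping collapses to the statement that the "degree" of $M(\la)$, measured as the sum of its row degrees $\sum_i\eta_i$, equals the sum of its right minimal indices $\sum_j\veps_j$. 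Alternatively, and perhaps more self-containedly, I would argue directly: by the predictable-degree property, $\deg(M(\la)x(\la)) = \max_i(\eta_i + \deg x_i)$ componentwise for the relevant combinations, and a dimension/degree count on the space of polynomial vectors in the kernel pins down both sums to the same value.

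For the converse part, given lists $(\eta_1,\ldots,\eta_m)$ and $(\veps_1,\ldots,\veps_n)$ of nonnegative integers with $\sum_i\eta_i = \sum_j\veps_j =: \sigma$, I would construct the pair explicitly. The cleanest route is to reduce to a canonical building block: for a single pair of indices, say $m = n = 1$ with $\eta_1 = \veps_1 = \sigma$, the row vectors $M(\la) = [\,1,\ \la,\ \la^2,\ \ldots,\ \la^\sigma\,]$ — no, this must be adapted — one wants $M(\la)\in\FF[\la]^{1\times(\sigma+... )}$; the standard device is the pair associated with the shift, e.g.
\[
L_k(\la) = \begin{bmatrix} -\la & 1 & & \\ & -\la & 1 & \\ & & \ddots & \ddots \\ & & & -\la & 1 \end{bmatrix} \in \FF[\la]^{k\times(k+1)},
\]
whose rows form a minimal basis with all row degrees $1$, dual to $\Lambda_k(\la)^T$ where $\Lambda_k(\la) = [\,1,\ \la,\ \ldots,\ \la^k\,]$, a single row of degree $k$; these are the prototypical dual minimal bases from \cite{For75,DDMV,blockKron}. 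Then the general case follows by a direct-sum / interleaving construction: assemble blocks of the above type and permute rows and columns so that the resulting $M(\la)$ has exactly row degrees $(\eta_1,\ldots,\eta_m)$ and $N(\la)$ exactly row degrees $(\veps_1,\ldots,\veps_n)$, using $\sum\eta_i = \sum\veps_j$ to guarantee the column counts match up to $m+n$. One must then verify that the assembled $M(\la)$ and $N(\la)$ still satisfy $M(\la)N(\la)^T = 0$, retain full row rank at every $\la_0\in\overline\FF$, and remain row reduced — all three properties are preserved under the block/permutation operations, so Theorem~\ref{minbasis_th} applies and the pair is a pair of dual minimal bases.

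The main obstacle is the converse construction: it is easy to build \emph{a} pair of dual minimal bases with the right \emph{sum} of indices, but engineering the blocks so that the row degrees match the \emph{prescribed ordered lists} $(\eta_1,\ldots,\eta_m)$ and $(\veps_1,\ldots,\veps_n)$ exactly — rather than some other partition of $\sigma$ — requires care in how the canonical $L_{k}$-type blocks are chosen and glued, and in checking that no "mixing" of blocks inadvertently raises a row degree or destroys column-reducedness. The direct part is comparatively routine once one recognizes it as the index sum theorem specialized to a matrix with trivial elementary divisor structure; the only subtlety there is justifying that a minimal basis $M(\la)$ genuinely has no elementary divisors at any finite or infinite point, which is exactly the content of Theorem~\ref{minbasis_th} (full rank everywhere in $\overline\FF$ plus row reducedness). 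I would present the direct part first as a short paragraph citing \cite{For75}, then devote the bulk of the argument to the explicit construction for the converse, following \cite{DDMV}.
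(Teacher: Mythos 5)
This theorem is not proved in the paper at all: the text explicitly attributes the first part to \cite{For75} and the converse to \cite{DDMV} and gives no argument, so there is no ``paper proof'' to match. Judged on its own merits, your outline identifies the right sources and the right general strategy, but it has two genuine soft spots. First, in the direct part your justification that a minimal basis has ``no infinite elementary divisors'' because it is row reduced is false when the row degrees $\eta_i$ are not all equal: taking grade $g=\max_i\eta_i$, the reversal $\mathrm{rev}_g M(\la)$ has $M_{hr}$-related structure at $\la=0$ contributing exactly $\sum_i(g-\eta_i)$ to the index sum, and it is precisely this term that, inserted into the Index Sum Theorem $\delta_\infty+\sum_j\veps_j=mg$, yields $\sum_j\veps_j=\sum_i\eta_i$. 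Your phrase ``the bookkeeping collapses'' hides the one computation that actually carries the proof; it needs to be done (or the argument run via the degree of the maximal $m\times m$ minors, which for a row-reduced matrix equals $\sum_i\eta_i$).

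Second, and more seriously, the converse is not established. A direct sum of blocks $L_{k}(\la)\oplus\cdots$ dual to $\Lambda_{k}(\la)^T\oplus\cdots$ can only realize pairs of lists in which one of the two matrices has all row degrees equal to $1$ (or, after obvious variants, all equal to a common value); no amount of row/column permutation of such a direct sum changes the multiset of row degrees, so this device cannot produce two \emph{arbitrary} prescribed lists $(\eta_1,\ldots,\eta_m)$ and $(\veps_1,\ldots,\veps_n)$ with equal sums. You correctly flag this as ``the main obstacle,'' but flagging it is not resolving it: the actual construction in \cite{DDMV} (the ``zigzag'' matrices of its title) is a genuinely different, non-block-diagonal design in which the two degree sequences are interleaved, and it is the entire content of the converse. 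As written, your proposal proves the easy direction modulo the infinite-structure computation and leaves the hard direction as an acknowledged gap.
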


\section{A finite number of rank conditions for minimal bases} \label{sec.newchar}

The characterization of a minimal basis given in Theorem \ref{minbasis_th} does not seem tractable at first sight from a numerical point of view, since it requires a rank test over all $\lambda_0 \in \overline \FF$. In this section we show that this can be reduced to a finite number of rank tests on matrices that can be described
from the dimensions and coefficients of $M(\la)$. A crucial role will be played here by the so-called Sylvester matrices \cite{AndJuryIEEE76,BKAK} of a polynomial matrix $M(\la)$ of degree at most $d$,
 \begin{equation}\label{eq:Md}
   M(\la)  := M_0 + M_1 \la + \cdots +  M_d \la^d, \quad M_i\in \FF^{m\times (m+n)},
   \end{equation}
  defined for $k=1,2,\ldots$ as
  \begin{equation}\label{eq:Sylvester}
   S_k := \underbrace{ \left[ \begin{array}{ccccc} M_0 \\ M_1 & M_0 \\
   \vdots & M_1 &  \ddots \\
   M_d & & \ddots & M_0 \\
   0 & M_d & & M_1\\
   \vdots & \ddots & \ddots & \vdots \\
   0 & \ldots & 0 & M_d
   \end{array}\right]}_{k \; \mathrm{blocks}}, \quad S_k \in  \FF^{(k+d)m\times k(m+n)},
  \end{equation}
where $S_k$ has $k$ block columns. In order to avoid trivialities, we assume throughout the paper that $m>0, n>0,$ and $d > 0$. In certain results involving Sylvester matrices of several polynomial matrices, we will use the notation $S_k (M)$ to indicate that the Sylvester matrices are associated to the polynomial matrix $M(\la)$.

The ranks of the matrices $S_k$ in \eqref{eq:Sylvester} are fundamental in this paper and the following two simple lemmas are used very often.
\begin{lemma} \label{lk}
Let $S_k$ for a given index $k>1$ have full column rank $r_k:=k(m+n)$. Then all submatrices $S_\ell$ with $1 \le \ell < k$
also have full column rank $r_\ell:=\ell(m+n)$.
\end{lemma}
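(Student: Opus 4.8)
The plan is to exhibit, for each $\ell$ with $1 \le \ell < k$, an explicit copy of $S_\ell$ inside $S_k$ sitting in a set of columns that is ``independent'' of the remaining columns in a way that transfers full column rank downward. The natural choice is to take the first $\ell$ block columns of $S_k$. Writing out the block structure in \eqref{eq:Sylvester}, the first $\ell$ block columns of $S_k$ form a matrix of size $(k+d)m \times \ell(m+n)$ whose leading $(\ell+d)m$ rows are exactly $S_\ell$, and whose remaining $(k-\ell)m$ rows are entirely zero (because the nonzero blocks $M_0,\ldots,M_d$ in each of the first $\ell$ block columns only reach down to block row $\ell+d$). Hence the first $\ell$ block columns of $S_k$, after deleting zero rows, are precisely $S_\ell$, so $\operatorname{rank} S_\ell$ equals the rank of that submatrix of $S_k$.

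The remaining point is monotonicity of rank under deleting columns: since $S_k$ has full column rank $k(m+n)$, every subset of its columns is linearly independent; in particular the first $\ell(m+n)$ columns are linearly independent, so the submatrix they span has rank $\ell(m+n)$. Combining this with the previous paragraph gives $\operatorname{rank} S_\ell = \ell(m+n) = r_\ell$, i.e.\ $S_\ell$ has full column rank. Since $\ell$ was arbitrary in the range $1 \le \ell < k$, the lemma follows.

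I do not expect any real obstacle here: the only thing to be careful about is correctly reading off the block pattern of \eqref{eq:Sylvester} to confirm that the first $\ell$ block columns genuinely have $S_\ell$ as their top part and zeros below, which is immediate from the banded-Toeplitz shape (the highest block $M_d$ in block column $j$ lands in block row $j+d$, so for $j \le \ell$ it never exceeds block row $\ell+d$). One could alternatively phrase this via the trivial inclusion of column spaces, but the submatrix-extraction argument above is the cleanest and needs no additional machinery beyond ``a submatrix of a full-column-rank matrix obtained by deleting columns still has full column rank.''
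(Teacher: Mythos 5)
Your argument is correct and coincides with the paper's proof, which likewise observes that $S_\ell$ padded with zeros forms the first $\ell$ block columns of $S_k$ and then invokes the fact that columns of a full-column-rank matrix remain linearly independent after deletion. Your write-up just spells out the block-Toeplitz bookkeeping that the paper calls trivial.
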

\begin{proof}
Each matrix $S_\ell$ with $1 \le \ell < k$, appropriately padded with zeros, forms the first $\ell$ block columns of $S_k$, from which the rank condition trivially follows.
\end{proof}

\begin{lemma}  \label{kl}
Let  $S_k$ with a given index $k>0$  have full row rank $r_k:=(k+d)m$. Then all embedding matrices $S_\ell$ with $k<\ell$ also have full row rank
$r_\ell := (\ell+d)m$.
\end{lemma}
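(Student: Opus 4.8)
The plan is to show that if $S_k$ has full row rank, then appending one more block column (which adds $m$ new rows and $(m+n)$ new columns) preserves full row rank; the general case $k < \ell$ then follows by induction on $\ell - k$. First I would observe from the block structure in \eqref{eq:Sylvester} that $S_{k+1}$ is obtained from $S_k$ by adding a new (rightmost) block column containing a copy of the stacked coefficients $[M_0^T, M_1^T, \ldots, M_d^T]^T$, shifted down by $k$ block rows, together with $m$ new block rows at the bottom (the rows indexed by block-row $k+1+1$ through $k+1+d$ in the new matrix versus block-row $k+1$ through $k+d$ in the old one). Concretely, writing things in terms of block rows, $S_{k+1}$ has the form
\begin{equation*}
S_{k+1} = \left[ \begin{array}{c|c} S_k & \star \\ \hline 0 & M_d \end{array}\right],
\end{equation*}
where the top-left block $S_k$ occupies the first $(k+d)m$ rows and first $k(m+n)$ columns, the bottom-right block is the single $m \times (m+n)$ matrix $M_d$ (which is $\begin{bmatrix} M_d & 0 \end{bmatrix}$ padded, but actually just $M_d$ sitting in the last block column), and $\star$ denotes the remaining entries of the last block column in the first $(k+d)m$ rows.

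The key step is then a straightforward linear-algebra argument: suppose $y^T S_{k+1} = 0$ for some row vector $y \in \FF^{1 \times (k+1+d)m}$. Partition $y = [y_1^T \mid y_2^T]$ conformally, where $y_1 \in \FF^{1 \times (k+d)m}$ corresponds to the rows of the embedded $S_k$ and $y_2 \in \FF^{1 \times m}$ corresponds to the $m$ new bottom rows. Looking only at the columns belonging to the embedded $S_k$ (the first $k(m+n)$ columns), the equation $y^T S_{k+1} = 0$ restricted to those columns reads $y_1^T S_k = 0$ (since $y_2$ multiplies a zero block there). Because $S_k$ has full row rank $(k+d)m$ by hypothesis, this forces $y_1 = 0$. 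Feeding $y_1 = 0$ back into the equation restricted to the last block column, what survives is $y_2^T M_d = 0$. Here one uses that $S_k$ having full row rank $(k+d)m$ forces $M_d$ to have full row rank $m$: indeed the bottom block row of $S_k$ is $\begin{bmatrix} 0 & \cdots & 0 & M_d \end{bmatrix}$, so if $M_d$ did not have full row rank there would be a nonzero left-annihilator of that block row, and (padding with zeros above) of $S_k$, contradicting full row rank. Hence $y_2^T M_d = 0$ gives $y_2 = 0$, so $y = 0$ and $S_{k+1}$ has full row rank $(k+1+d)m$.

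I do not anticipate a genuine obstacle here; the lemma is the natural "embedding" counterpart of Lemma \ref{lk}, and the only mild subtlety is correctly reading off from \eqref{eq:Sylvester} exactly which rows and columns are appended when passing from $S_k$ to $S_{k+1}$ and verifying that the new block column meets the old rows only through a copy of the coefficient stack while the new rows meet the old columns only through zeros. Once that block decomposition is in hand, the rank propagation is immediate, and the extra observation that full row rank of $S_k$ forces full row rank of $M_d$ (needed to close the argument on $y_2$) is itself a one-line consequence of the bottom block row of $S_k$ being $[\,0 \;\cdots\; 0 \; M_d\,]$. An alternative, even quicker, phrasing avoids the vector $y$ altogether: column operations on $S_{k+1}$ show its row space contains that of $S_k$ (full, by hypothesis) plus the rows of $M_d$ in the new positions, and dimension counting with the fact that $\rank M_d = m$ gives the claim — but the annihilator argument above is cleanest to write out.
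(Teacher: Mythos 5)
Your proposal is correct and follows essentially the same route as the paper: the same block-triangular decomposition $S_{k+1}=\left[\begin{smallmatrix} S_k & \star \\ 0 & M_d\end{smallmatrix}\right]$, the same observation that full row rank of $S_k$ forces $\rank M_d=m$ via its bottom block row $[\,0\;\cdots\;0\;M_d\,]$, and induction on $\ell-k$. The only difference is that you spell out the left-annihilator argument explicitly, whereas the paper simply cites the block-triangular structure with both diagonal blocks of full row rank.
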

\begin{proof}
We first prove that the result holds for $\ell=k+1$. Since $S_k$ has full row rank $r_k$, its bottom block row $\left[\begin{array}{c|c} 0 & M_d \end{array}\right]$ has full row rank $m$ and so does the matrix $M_d$. Since the matrix $S_{k+1}$ can be partitioned in a block triangular form
$$S_{k+1} = \left[\begin{array}{c|c} S_k & X \\  \hline 0 & M_d \end{array}\right], $$
where both $S_k$ and $M_d$ have full row rank, $S_{k+1}$ has also full row rank. The result for general $\ell$ larger than $k$, then easily follows by induction.
\end{proof}

In some of the results in this section, we will assume that $M(\la) \in \FF[\la]^{m \times (m+n)}$ has (full) normal rank $m$ so that it has a (right)
null-space of dimension $n$ (over the field of rational functions). The connection of this null-space $\mathcal{N}_r (M)$ with the Sylvester matrices in \eqref{eq:Sylvester} comes from the fact that any $N(\la)\in \FF[\la]^{n\times (m+n)}$ such that the columns of $N(\la)^T$ belong to $\mathcal{N}_r (M)$ (i.e.\/ $M(\la)N(\la)^T=0$) with expansion
 \begin{equation}\label{eq:Nd}
   N(\la)  := N_0 + N_1 \la + \cdots +  N_{s} \la^{s}, \quad N_i\in \FF^{n\times (m+n)},
   \end{equation}
will satisfy the equations
$$
S_k \left[ \begin{array}{c} N^T_0 \\  \vdots \\ N^T_{s} \\ 0  \end{array}\right] = 0 \quad \mbox{for all} \quad  k \geq s+1 .$$
In particular, one can choose $N(\la)^T$ to be a minimal basis of $\mathcal{N}_r (M)$ and then to derive certain rank conditions from this. The following theorem is proven in \cite{BKAK} using this type of arguments.

\begin{theorem} \label{thm:ranktheorem} Let $M(\la) \in \FF[\la]^{m\times (m+n)}$ be a polynomial matrix of full row rank $m$ and let $S_k$ be its Sylvester matrices for $k= 1,2, \ldots$. Let $r_k$ be the rank of $S_k$, $n_k$ be the right nullity of $S_k$, and $\alpha_k$ be the number of right minimal indices $\varepsilon_i$ of $M(\la)$ equal to $k$. Then these magnitudes obey the following recursive relations
\begin{equation} \label{recur}
\alpha_k = (n_{k+1}-n_k)-  (n_k-n_{k-1}) =  (r_k-r_{k-1})-(r_{k+1}-r_k), \quad k=1,2, \ldots ,
\end{equation}
initialized with $r_0=n_0=0$ and $\alpha_0=m+n-r_1=n_1$.
\end{theorem}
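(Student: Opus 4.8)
The plan is to analyze the right null-space of $M(\la)$ via a minimal basis $N(\la)^T$ of $\mathcal{N}_r(M)$ and to count, for each $k$, how the columns of the shifted blocks of $N(\la)$ contribute to $\ker S_k$. Write $N(\la) = N_0 + N_1\la + \cdots + N_s\la^s \in \FF[\la]^{n\times(m+n)}$ for a minimal basis of $\mathcal{N}_r(M)$, so that the rows of $N(\la)$ (equivalently, the columns of $N(\la)^T$) have degrees exactly the right minimal indices $\varepsilon_1 \le \cdots \le \varepsilon_n$ of $M(\la)$, and $s = \varepsilon_n$. For each right minimal vector $x_i(\la)^T$ of degree $\varepsilon_i$ and each exponent $j$ with $\varepsilon_i \le j \le k-1$, the vector obtained by stacking the coefficients of $\la^j x_i(\la)^T$ (padded with zeros to length $k$ blocks) lies in $\ker S_k$; there are $\sum_{i=1}^n \max(0, k - \varepsilon_i)$ such vectors. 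First I would show these vectors are linearly independent — this follows from the fact that $N(\la)$ is column reduced (Theorem \ref{minbasis_th}), since a nontrivial dependence among $\{\la^j x_i(\la)\}$ would produce a nonzero polynomial combination $N(\la)^T u(\la)$ of lower-than-expected degree, contradicting minimality. Next I would argue they span all of $\ker S_k$: any vector in $\ker S_k$, read off as the coefficient tuple of a polynomial vector $y(\la)^T$ of degree $< k$ with $M(\la)y(\la)^T = 0$, lies in $\mathcal{N}_r(M)$ and hence is an $\FF[\la]$-combination $\sum_i p_i(\la) x_i(\la)^T$; the degree constraint $\deg y < k$ forces $\deg(p_i x_i) < k$, i.e.\ $\deg p_i \le k-1-\varepsilon_i$, so $y(\la)^T$ is an $\FF$-combination of exactly the spanning vectors above. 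This yields the closed form $n_k = \sum_{i=1}^n \max(0, k-\varepsilon_i)$.

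From this formula the recursion is a routine finite difference. Setting $\beta_k := n_k - n_{k-1} = \#\{i : \varepsilon_i \le k-1\} = \#\{i : \varepsilon_i < k\}$, one gets $\beta_{k+1} - \beta_k = \#\{i : \varepsilon_i = k\} = \alpha_k$, which is the first equality in \eqref{recur}. The rank identity $r_k = k(m+n) - n_k$ (valid because $S_k$ has $k(m+n)$ columns) converts the nullity recursion into the stated rank recursion, since $(r_k - r_{k-1}) - (r_{k+1} - r_k) = -[(n_k - n_{k-1}) - (n_{k+1} - n_k)] \cdot(-1)$ after substituting $r_k = k(m+n)-n_k$; a careful sign check confirms $(r_k-r_{k-1})-(r_{k+1}-r_k) = (n_{k+1}-n_k)-(n_k-n_{k-1}) = \alpha_k$. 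Finally I would verify the initialization: $S_1 = \begin{bmatrix} M_0 \\ \vdots \\ M_d\end{bmatrix}$ has $m+n$ columns and its right nullity $n_1$ equals the number of degree-zero right minimal vectors plus the dimension of any "constant" part, but more directly $n_1 = \#\{i : \varepsilon_i = 0\} = \alpha_0$ matches $m+n - r_1$; and $r_0 = n_0 = 0$ holds by the empty-matrix convention. Here I should be slightly careful that $\alpha_0$ counts $\varepsilon_i = 0$, i.e.\ constant right null-vectors, consistent with $n_1 = \sum_i \max(0, 1-\varepsilon_i) = \#\{i : \varepsilon_i = 0\}$.

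The main obstacle, and the step deserving the most care, is the spanning claim $\ker S_k \subseteq \operatorname{span}\{\la^j x_i(\la)^T\}$: it hinges on translating "$v \in \ker S_k$" into "$v$ is the coefficient vector of a polynomial $y(\la)^T$ of degree strictly less than $k$ annihilated by $M(\la)$," then invoking the predictable-degree (minimal basis) property of $N(\la)$ to control the degrees of the $\FF[\la]$-coefficients $p_i(\la)$ in the expansion $y(\la)^T = \sum_i p_i(\la) x_i(\la)^T$. That predictable-degree property is exactly what column-reducedness of $N(\la)$ buys us via Theorem \ref{minbasis_th}, together with the fact (Remark \ref{rem:unicity}-type reasoning) that $N(\la)^T$ being a minimal basis means $\deg(\sum_i p_i x_i) = \max_i (\deg p_i + \varepsilon_i)$. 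I would isolate this as a small lemma before assembling the counting argument, and then the rest is bookkeeping with the two trivial rank/nullity lemmas (Lemmas \ref{lk} and \ref{kl}) available to handle the boundary behavior in $k$.
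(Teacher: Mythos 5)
Your argument is correct and is essentially the proof the paper has in mind: the paper itself only sketches the same counting of $\ker S_k$ by shifted copies of the minimal-basis null vectors (deferring details to \cite{BKAK}), and your closed form $n_k=\sum_{i}\max(0,k-\varepsilon_i)$, justified by the predictable-degree property and followed by the finite-difference and rank--nullity bookkeeping, is exactly the rigorous version of that sketch. One cosmetic slip: the shifts should range over $0\le j\le k-1-\varepsilon_i$ (so that $\deg(\la^j x_i)\le k-1$), not $\varepsilon_i\le j\le k-1$, although the count $\max(0,k-\varepsilon_i)$ you actually use is the correct one.
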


The second equality in \eqref{recur} between ranks and nullities easily follows from the identity
\begin{equation} \label{eq:rn}
r_k+n_k= k(m+n), \; \mbox{for}\; k\ge 1,
\end{equation}
and an intuitive proof of the first identity follows from the recursive definition of the null-spaces of $S_k$, for $k=1, 2, \ldots$. For $k=1$ the theorem says that the matrix
$$   S_1 := \left[ \begin{array}{c} M_0 \\ M_1 \\ \vdots \\ M_d \end{array}\right]  \in \FF^{(d+1)m \times (m+n)} $$
has a right null space of dimension $n_1$ equal to $\alpha_0$, the number of right minimal indices $\varepsilon_i$ of $M(\la)$ equal to $0$. Indeed, every
index $\varepsilon_i=0$ corresponds to a right null vector of degree $0$ (i.e.\/ constant) of $S_1$ and hence also of $M(\la)$.
For $k=2$, the matrix
$$   S_2 := \left[ \begin{array}{cc} M_0 & 0 \\ M_1 & M_0 \\ \vdots & M_1 \\ M_d & \vdots \\ 0 & M_d \end{array}\right] \, \in \FF^{(d+2)m \times 2(m+n)} $$
has a right null space of dimension $n_2$ equal to $\alpha_1+2\alpha_0$, because $S_2$ contains $S_1$ (padded with zeros) twice as a submatrix and hence the null-space of $S_1$ will contribute also twice to the null-space of $S_2$. The ``additional'' $\alpha_1$ linearly independent vectors in the null-space of $S_2$ correspond to the ``true" vectors of degree $1$ in $\mathcal{N}_r (M)$, i.e., those vectors in the null-space of $S_2$ which cannot be obtained as linear combinations of vectors in the null-space of $S_1$ padded with zeros. It then follows that $n_2-n_1 = \alpha_0+\alpha_1$. One uses the same arguments to show recursively that
\begin{equation} \label{sum} n_{k+1}-n_k = \sum_{i=0}^k \alpha_i \, , \end{equation}
which is equivalent to the first equality in \eqref{recur} together with the initializations $n_0 =0$, $n_1 = \alpha_0$. We refer to \cite{BKAK} and the references therein for a more detailed proof.
The following corollary, also given in \cite{BKAK}, is then easily derived from this.

\begin{corollary} \label{dprime} Let $M(\la) \in \FF[\la]^{m\times (m+n)}$ be a polynomial matrix of full row rank $m$ and let $S_k$ be its Sylvester matrices for $k =1,2,\ldots$. Let $r_k$ be the rank of $S_k$ and $n_k$ be the right nullity of $S_k$. If $d'$ is the smallest index $k$ for which
 \begin{equation}\label{eq:max}
    n_{k+1} - n_{k} = n, \quad \mathrm{or \;\; equivalently} \quad r_{k+1} - r_{k} = m,
 \end{equation}
 then $d'$ is the maximum right minimal index of $M(\la)$ or, equivalently, the maximum column degree of any minimal basis of $\mathcal{N}_r (M)$. Moreover, for all $k$ larger than $d'$, the equalities \eqref{eq:max} still hold.
 \end{corollary}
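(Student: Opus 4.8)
The plan is to extract $d'$ directly from the counting identity \eqref{sum}, so the argument is essentially bookkeeping built on Theorem \ref{thm:ranktheorem}. First I would record what the full-row-rank hypothesis buys us: since $M(\la)\in\FF[\la]^{m\times(m+n)}$ has normal rank $m$, the rational subspace $\mathcal{N}_r(M)$ has dimension $n$, hence any minimal basis of $\mathcal{N}_r(M)$ consists of exactly $n$ polynomial vectors, and $M(\la)$ has exactly $n$ right minimal indices $\varepsilon_1\le\cdots\le\varepsilon_n$. In the notation of Theorem \ref{thm:ranktheorem} this says $\sum_{i\ge0}\alpha_i=n$, where $\alpha_i$ is the number of $\varepsilon_j$ equal to $i$.

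Next I would use the recursion \eqref{sum}, namely $n_{k+1}-n_k=\sum_{i=0}^k\alpha_i$, together with the identity \eqref{eq:rn}, which gives $r_{k+1}-r_k=(m+n)-(n_{k+1}-n_k)$ and hence the equivalence $n_{k+1}-n_k=n\Longleftrightarrow r_{k+1}-r_k=m$ asserted in \eqref{eq:max}. The partial sums $\sum_{i=0}^k\alpha_i$ form a nondecreasing sequence of integers bounded above by $n$; it equals $n$ precisely when every minimal index $\varepsilon_j$ satisfies $\varepsilon_j\le k$, i.e.\ precisely when $k\ge\varepsilon_n:=\max_j\varepsilon_j$, and it is strictly smaller than $n$ for $k<\varepsilon_n$ because then the contribution $\alpha_{\varepsilon_n}\ge1$ has not yet been included. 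Therefore the smallest index $k$ with $n_{k+1}-n_k=n$ is exactly $\varepsilon_n$, which proves $d'=\varepsilon_n$; and since $\sum_{i=0}^k\alpha_i=n$ continues to hold for every $k>\varepsilon_n$, the equalities \eqref{eq:max} persist for all $k$ larger than $d'$.

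Finally, for the reformulation in terms of minimal bases of $\mathcal{N}_r(M)$, I would invoke the fundamental uniqueness property of minimal bases due to Forney (recalled in Section \ref{Sec:Invariants}): the ordered list of degrees of the vectors in any minimal basis of $\mathcal{N}_r(M)$ coincides with the list of right minimal indices $(\varepsilon_1,\ldots,\varepsilon_n)$ of $M(\la)$. Hence the maximum column degree over any minimal basis of $\mathcal{N}_r(M)$ (arranged with the basis vectors as columns, as in Theorem \ref{minbasis_th}) equals $\varepsilon_n=d'$. There is no real obstacle here beyond Theorem \ref{thm:ranktheorem}; the only points requiring care are that the full-row-rank hypothesis is what forces the total count $\sum_{i\ge0}\alpha_i$ to be exactly $n$, and that the monotonicity and boundedness of the partial sums is what simultaneously pins down the \emph{smallest} such $k$ and shows the condition cannot fail afterwards.
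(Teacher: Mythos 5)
Your proposal is correct and follows essentially the same route as the paper: both arguments rest on the counting identity \eqref{sum} from Theorem \ref{thm:ranktheorem}, observing that the partial sums $\sum_{i=0}^k\alpha_i$ first reach $n$ exactly at the largest right minimal index and stay there afterwards, with \eqref{eq:rn} supplying the rank/nullity equivalence. Your write-up is slightly more explicit than the paper's about why the \emph{smallest} such $k$ coincides with $\varepsilon_n$, but there is no substantive difference.
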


 \begin{proof}
 As soon as the $\alpha_i$, the number of right minimal indices of $M(\la)$ equal to $i$, add up to $n$, we have found a complete polynomial basis of the rational right null-space of $M(\la)$.
 There can be no further linearly independent right null vectors of $M(\la)$ or nonzero $\alpha_i$ since the sum $\sum_{i=0}^k \alpha_i =n$
is the total number of right minimal indices of $M(\la)$. The corresponding largest index $k$ with $\alpha_k\neq 0$ is therefore the largest right minimal index $d'$ of $M(\la)$. Since all $\alpha_k=0$ for $k$ larger than $d'$, equation \eqref{sum} guarantees that the equality \eqref{eq:max}
continues to hold. The equivalent condition $r_{k+1} - r_{k} = m$ follows from \eqref{eq:rn}.
 \end{proof}

We can also express the sum of the right minimal indices $\varepsilon_j$ of $M(\la)$ as a function of the nullities $n_k$.
\begin{corollary} \label{degreesum}
Let $M(\la) \in \FF[\la]^{m\times (m+n)}$ be a polynomial matrix of full row rank $m$ and let $S_k$ be its Sylvester matrices for $k=1,2,\ldots$. Let $n_k$ and $r_k$ be the right nullity and the rank of $S_k$, respectively, $\alpha_k$ be the number of right minimal indices $\varepsilon_i$ of $M(\la)$ equal to $k$, and $d'$ be the maximum right minimal index of $M(\la)$. Then
 \begin{equation}\label{dsum}
    \sum_{j=1}^n \varepsilon_j = \sum_{k=1}^{d'} k\alpha_k = \sum_{k=1}^{d'} k(n_{k-1}-2 n_k + n_{k+1})  = n \cdot d' - n_{d'}
= r_{d'} - m\cdot d'.
 \end{equation}
 \end{corollary}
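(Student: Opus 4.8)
The plan is to verify the chain of five equalities one link at a time, from left to right, using only Theorem~\ref{thm:ranktheorem}, Corollary~\ref{dprime}, and the rank--nullity identity \eqref{eq:rn}. I would dispose of the degenerate case $d'=0$ at the outset: there all the displayed quantities equal $0$, the sums being empty and $n_0 = r_0 = 0$, so in what follows I may assume $d'\ge 1$ without worrying about empty ranges.

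The first equality $\sum_{j=1}^n \varepsilon_j = \sum_{k=1}^{d'} k\alpha_k$ is immediate: by definition $\alpha_k$ counts the right minimal indices $\varepsilon_j$ of $M(\la)$ equal to $k$, by Corollary~\ref{dprime} every such index lies in $\{0,1,\ldots,d'\}$, and the $k=0$ term contributes nothing. The second equality follows by substituting, term by term, the relation $\alpha_k = (n_{k+1}-n_k)-(n_k-n_{k-1}) = n_{k-1}-2n_k+n_{k+1}$ provided by Theorem~\ref{thm:ranktheorem} for $k=1,2,\ldots$.

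For the third equality I would carry out a summation by parts. Setting $\delta_k := n_{k+1}-n_k$, so that $n_{k-1}-2n_k+n_{k+1} = \delta_k - \delta_{k-1}$, the sum becomes $\sum_{k=1}^{d'} k(\delta_k-\delta_{k-1})$; re-indexing the $\delta_{k-1}$ part and cancelling the common terms collapses this to $d'\,\delta_{d'} - \sum_{j=0}^{d'-1}\delta_j = d'\,\delta_{d'} - (n_{d'}-n_0)$. Now $\delta_{d'} = n_{d'+1}-n_{d'} = n$ by Corollary~\ref{dprime}, and $n_0 = 0$ by the initialization in Theorem~\ref{thm:ranktheorem}, which yields $n\cdot d' - n_{d'}$. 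Finally, the rank--nullity identity \eqref{eq:rn} at $k=d'$ reads $r_{d'}+n_{d'} = d'(m+n)$, whence $n_{d'} = d'(m+n)-r_{d'}$ and therefore $n\cdot d' - n_{d'} = r_{d'} - m\cdot d'$, closing the chain.

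The argument is essentially bookkeeping, and I do not anticipate a genuine obstacle; the only step requiring the slightest attention is the index shift in the summation by parts of the third equality. An alternative route that avoids it altogether would exchange the order of summation in $\sum_{k=1}^{d'} k\alpha_k = \sum_{k=1}^{d'}\sum_{i=1}^{k}\alpha_k = \sum_{i=1}^{d'}\sum_{k=i}^{d'}\alpha_k$ and then telescope, using $\sum_{i=0}^{j-1}\alpha_i = n_j - n_{j-1}$ from \eqref{sum} together with $\sum_{i=0}^{d'}\alpha_i = n$; this leads to the same value $n\cdot d' - n_{d'}$.
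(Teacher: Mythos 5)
Your proof is correct and follows essentially the same route as the paper: the paper's appeal to the matrix relation \eqref{matrixrelation} to get $\sum_{k=1}^{d'} k\alpha_k = d'\,n_{d'+1}-(d'+1)n_{d'}$ is just your summation by parts carried out implicitly, and both arguments then finish with $n_{d'+1}=n_{d'}+n$ from Corollary \ref{dprime} and the rank--nullity identity \eqref{eq:rn} at $k=d'$.
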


\begin{proof}
The first identity follows from the definition of $\alpha_k$ which is the number of right minimal indices $\varepsilon_j$ equal to $k$. The second identity follows from
Theorem \ref{thm:ranktheorem}. The third identity follows from the relations of Theorem \ref{thm:ranktheorem}, which written as follows
\begin{equation} \label{matrixrelation}
 n_0=0, \quad \left[ \begin{array}{c} \alpha_0 \\\alpha_1 \\ \alpha_2 \\ \vdots \\ \alpha_{d'}  \end{array} \right]   =
 \left[ \begin{array}{cccccc} 1 \\ -2 & 1 \\ 1 & -2 & 1 \\ & \ddots & \ddots & \ddots \\  & & 1 & -2 & 1 \end{array} \right]
  \left[ \begin{array}{c} n_1 \\ n_2 \\ \vdots \\ n_{d'} \\  n_{d'+1} \end{array} \right],
\end{equation}
allow us to see immediately that $\sum_{k=1}^{d'} k \alpha_k = d' \cdot n_{d' +1} - (d'+1) n_{d'}$. This equality combined with the identity in Corollary \ref{dprime}, i.e.,
$$ n_{d'+1}= n_{d'} +n,
$$
gives $\sum_{k=1}^{d'} k \alpha_k = n \cdot d' -  n_{d'}$.
The fourth identity follows from \eqref{eq:rn} applied to $k=d'$.
 \end{proof}

 \begin{remark}
 Notice that equation \eqref{matrixrelation} in the proof of Corollary \ref{degreesum} establishes a one to one correspondence between the index sets $\{n_i,i=1,\ldots,d'+1\}$ and
  $\{\alpha_i,i=0,\ldots,d'\}$, as long as all the indices are non-negative and  $\sum_{i=0}^{d'}\alpha_i=n$ holds. This last identity is clearly equivalent to $n_{d'+1}-n_{d'}=n$.
 \end{remark}

We are now ready to formulate our necessary and sufficient finite number of rank conditions on constant matrices derived from $M(\la)$ to verify that it is a minimal basis. The conditions are given in the next theorem, where we note that $d' \leq \sum_{i=1}^m d_i$, as a consequence of Corollary \ref{dprime} and Theorem \ref{thm:dualbasis}, and so the number of rank conditions to be checked is indeed finite.

\begin{theorem} \label{th:finitenumbranks}
Let $M(\la) \in \FF[\la]^{m\times (m+n)}$ be a polynomial matrix, let $d_i, i=1,\ldots, m$, be its row degrees, and $M_{hr}$ be its highest-row-degree coefficient matrix. Let $S_k$ be the Sylvester matrices of $M(\la)$ for $k=1,2,\ldots$, and let $r_k$ and $n_k$ be the rank and the right nullity of $S_k$, respectively. Let $d'$ be the smallest index $k$ for which $n_{k+1}=n_k+n$, or equivalently, $r_{k+1}=r_k+m$. Then $M(\la)$ is a minimal basis if and only if the following rank conditions are satisfied
 \begin{equation}\label{eq:minimal}
    \rank M_{hr} = m \quad \mbox{and} \quad  r_{d'} - m \cdot d' = \sum_{i=1}^m d_i  \,.
 \end{equation}
 \end{theorem}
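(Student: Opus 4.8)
The plan is to deduce the statement from Theorem~\ref{minbasis_th}, Corollary~\ref{degreesum}, Theorem~\ref{thm:dualbasis} and Remark~\ref{rem:unicity}. First I would dispose of the degenerate situation $\rank M_{hr}<m$: then $M(\la)$ is not row reduced, hence not a minimal basis by Theorem~\ref{minbasis_th}, while the first equality in \eqref{eq:minimal} fails, so both sides of the asserted equivalence are false. Thus one may assume $\rank M_{hr}=m$, i.e.\ $M(\la)$ is row reduced; by Remark~\ref{rem:unicity} it then has full row (normal) rank $m$ and its rows form a basis of the rational subspace $\mathcal{V}$ they span, and by Corollary~\ref{dprime} the index $d'$ is well defined. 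Since the first condition in \eqref{eq:minimal} now holds automatically, it remains to prove that $M(\la)$ is a minimal basis if and only if $r_{d'}-m\cdot d'=\sum_{i=1}^m d_i$.

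Next I would bring in the right null-space. As $M(\la)$ has full row rank $m<m+n$, the subspace $\mathcal{N}_r(M)$ is nontrivial of dimension $n$, so there is $N(\la)\in\FF[\la]^{n\times(m+n)}$ such that the columns of $N(\la)^T$ form a minimal basis of $\mathcal{N}_r(M)$; then $M(\la)N(\la)^T=0$, $N(\la)$ has full row rank $n$, its row degrees are exactly the right minimal indices $\varepsilon_1,\dots,\varepsilon_n$ of $M(\la)$, and Corollary~\ref{degreesum} gives $\sum_{j=1}^n\varepsilon_j=r_{d'}-m\cdot d'$. Because $N(\la)^T$ has full column rank $n$, the space $\mathcal{N}_\ell(N^T)$ has dimension $(m+n)-n=m$; it contains the $m$ linearly independent rows of $M(\la)$ (since $MN^T=0$), and hence coincides with $\mathcal{V}$.

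Then I would compute the sum of the minimal indices of $\mathcal{V}$. Let $\widehat M(\la)\in\FF[\la]^{m\times(m+n)}$ be any minimal basis of the rational subspace $\mathcal{V}=\mathcal{N}_\ell(N^T)$; by definition its row degrees are the minimal indices $\eta_1,\dots,\eta_m$ of $\mathcal{V}$. Each row of $\widehat M$ lies in $\mathcal{N}_\ell(N^T)$, so $\widehat M N^T=0$, and since $\widehat M$ and $N$ are minimal bases of full row ranks $m$ and $n$ with $m+n$ columns, Definition~\ref{dual-m-b} shows they are dual minimal bases; Theorem~\ref{thm:dualbasis} then yields $\sum_{i=1}^m\eta_i=\sum_{j=1}^n\varepsilon_j=r_{d'}-m\cdot d'$. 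Finally, by Remark~\ref{rem:unicity} (applicable because $M(\la)$ is row reduced), $\sum_{i=1}^m d_i\ge\sum_{i=1}^m\eta_i$, with equality holding precisely when $M(\la)$ is a minimal basis of $\mathcal{V}$, i.e.\ a minimal basis in the sense of Remark~\ref{rem:NEWconvention}. Combining, $M(\la)$ is a minimal basis iff $\sum_{i=1}^m d_i=r_{d'}-m\cdot d'$, which completes the argument.

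The main obstacle is establishing the bridge ``the sum of the minimal indices of $\mathcal{V}$ equals $\sum_j\varepsilon_j$''; the remaining steps are essentially bookkeeping. Making this rigorous requires the careful identification $\mathcal{V}=\mathcal{N}_\ell(N^T)$ together with the verification that an arbitrary minimal basis of that subspace is genuinely a dual minimal basis of the chosen minimal basis $N(\la)$ of $\mathcal{N}_r(M)$, precisely so that Theorem~\ref{thm:dualbasis} may be invoked. One must also take some care that $d'$ is guaranteed to exist under the standing hypothesis $\rank M_{hr}=m$, which is why the case $\rank M_{hr}<m$ is treated separately at the outset.
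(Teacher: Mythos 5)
Your proposal is correct and follows essentially the same route as the paper's proof: both reduce the statement to the chain $\sum_i d_i \geq \sum_i \eta_i = \sum_j \varepsilon_j = r_{d'}-m\cdot d'$ (with equality in the first step characterizing minimality) using Remark~\ref{rem:unicity}, Corollaries~\ref{dprime} and~\ref{degreesum}, and Theorem~\ref{thm:dualbasis}. Your write-up is somewhat more explicit than the paper's about why Theorem~\ref{thm:dualbasis} applies (the identification $\mathcal{V}=\mathcal{N}_\ell(N^T)$ and the verification that $\widehat M$ and $N$ are dual minimal bases), but the underlying argument is the same.
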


\begin{proof} Let us assume first that the conditions \eqref{eq:minimal} hold.
The condition $\rank M_{hr} = m$ says that $M(\la)$ is row reduced, which implies that $M(\la)$ has normal rank equal to its number of rows, $m$, (recall Remark \ref{rem:unicity}) and that it has a right null space of dimension $n$ and a set of $n$ right minimal indices $\varepsilon_j, j=1,\ldots, n$. These right minimal indices can then be computed via the recurrences of Theorem \ref{thm:ranktheorem} and condition $r_{d'} - m \cdot d' = \sum_{i=1}^m d_i$ implies (according to Corollaries \ref{degreesum} and \ref{dprime}) that
\begin{equation}  \label{minimality}
\sum_{i=1}^m d_i = \sum_{j=1}^n \varepsilon_j.
\end{equation}
Since $M(\la)$ has normal rank $m$, its rows form a polynomial basis of the row space of $M(\la)$ whose degree sum equals $\sum_{i=1}^m d_i$. If $\eta_1, \ldots , \eta_m$ are the minimal indices of the row space of $M(\la)$, then Theorem \ref{thm:dualbasis} combined with \eqref{minimality} imply
$\sum_{i=1}^m d_i = \sum_{j=1}^n \varepsilon_j = \sum_{j=1}^m \eta_j$, which in turn implies that the rows of $M(\la)$ must form a minimal basis, by definition of minimal basis.

The reverse implication follows immediately: assume that $M(\la)$ is a minimal basis. Then $\rank M_{hr} = m$ by Theorem \ref{minbasis_th}. In addition, Corollaries \ref{dprime} and \ref{degreesum} and Theorem \ref{thm:dualbasis} imply  $r_{d'} - m \cdot d' = \sum_{i=1}^m d_i$.
\end{proof}

Theorem \ref{th:finitenumbranks} is considerably simplified under the additional generic assumption that the leading coefficient of $M(\la)$ has full rank. Corollaries \ref{cor1:Mdfull} and \ref{cor2:Mdfull} are two results in that direction.

\begin{corollary} \label{cor1:Mdfull} Let $M(\la) = M_0 + M_1 \la + \cdots + M_d \la^d \in \FF[\la]^{m \times (m+n)}$ be a polynomial matrix such that $\rank M_d =m$. Let $S_k$ be the Sylvester matrices of $M(\la)$ for $k=1,2,\ldots$, let $r_k$ be the rank of $S_k$, and let $d'$ be the smallest index $k$ such that $r_{k+1} = r_k + m$. Then, $M(\la)$ is a minimal basis if and only if $r_{d'} = m (d+d')$.
\end{corollary}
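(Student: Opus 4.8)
The plan is to obtain Corollary \ref{cor1:Mdfull} as an immediate specialization of Theorem \ref{th:finitenumbranks}. The single observation that drives everything is that the hypothesis $\rank M_d = m$ makes the highest-row-degree data of $M(\la)$ trivial to describe, so that the two conditions in \eqref{eq:minimal} collapse to one.

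First I would show that $\rank M_d = m$ forces every row of $M(\la)$ to have degree exactly $d$: an $m \times (m+n)$ matrix of full row rank has no zero row, so the $\la^d$-coefficient of each row of $M(\la)$ is nonzero, and since $\deg M(\la) \le d$ this gives $d_i = d$ for $i = 1,\ldots,m$. Two consequences follow at once: (i) $\sum_{i=1}^m d_i = m d$; and (ii) the highest-row-degree coefficient matrix $M_{hr}$, whose $i$th row is by definition the coefficient of $\la^{d_i}$ in the $i$th row of $M(\la)$, coincides with $M_d$. In particular $\rank M_{hr} = \rank M_d = m$, so $M(\la)$ is row reduced; by Remark \ref{rem:unicity} it then has full row normal rank $m$, hence a right null space of dimension $n$, so Corollary \ref{dprime} applies and the index $d'$ of the statement is well defined. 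I would also note here that this $d'$ is literally the index appearing in Theorem \ref{th:finitenumbranks}, the two defining conditions $n_{k+1}=n_k+n$ and $r_{k+1}=r_k+m$ being equivalent via \eqref{eq:rn}.

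Then the proof is a one-line substitution. Theorem \ref{th:finitenumbranks} says $M(\la)$ is a minimal basis if and only if $\rank M_{hr} = m$ and $r_{d'} - m d' = \sum_{i=1}^m d_i$. Under the hypothesis $\rank M_d = m$ the first condition holds automatically by step one, and the right-hand side of the second equals $m d$; thus \eqref{eq:minimal} reduces exactly to $r_{d'} = m(d+d')$, which is the asserted criterion. (The point of the corollary is precisely that one no longer needs to extract the row degrees of $M(\la)$ nor to form and test $M_{hr}$ separately: a single rank of a Sylvester matrix suffices.)

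I do not anticipate a real obstacle. The only items requiring care are the elementary argument that $\rank M_d = m$ entails $\rank M = m$ — for which the cleanest route is the chain "$M_{hr} = M_d$ has full row rank $\Rightarrow$ $M(\la)$ is row reduced $\Rightarrow$ $M(\la)$ has full row rank" supplied by Remark \ref{rem:unicity}, ensuring both that $d'$ exists and that Theorem \ref{th:finitenumbranks} is being invoked in the intended regime — and the bookkeeping check that the corollary's $d'$ matches the theorem's. Everything else is a direct rewriting of \eqref{eq:minimal}.
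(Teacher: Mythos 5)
Your proposal is correct and follows the same route as the paper's own proof: observe that $\rank M_d = m$ forces $M_{hr}=M_d$ and all row degrees equal to $d$, so that the two conditions of Theorem \ref{th:finitenumbranks} collapse to the single equality $r_{d'} = m(d+d')$. Your extra care in checking that $d'$ is well defined and coincides with the index of the theorem is a welcome (if implicit in the paper) detail, but the argument is the same.
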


\begin{proof} Note that the assumptions of Corollary \ref{cor1:Mdfull} imply that $M_{hr} = M_d$ and that the row degrees $d_1, d_2, \ldots, \allowbreak d_m$ of $M(\la)$ are all equal to $d$ and, so, $\sum_{i=1}^m d_i = m\cdot d$. In this scenario, if $M(\la)$ is a minimal basis, then it satisfies the second equality in \eqref{eq:minimal}, which implies $r_{d'} = m (d+d')$. Conversely, if one assumes that $r_{d'} = m (d+d')$ holds, then the second equality in \eqref{eq:minimal} is satisfied, while the first one is guaranteed by $\rank M_d = m$. Therefore $M(\la)$ is a minimal basis.
\end{proof}

\begin{corollary} \label{cor2:Mdfull}
Let $M(\la) = M_0 + M_1 \la + \cdots + M_d \la^d \in \FF[\la]^{m \times (m+n)}$ be a polynomial matrix and let $S_k$ be the Sylvester matrices of $M(\la)$ for $k=1,2,\ldots$. Then, $M(\la)$ is a minimal basis with $\rank M_d =m$ if and only if there exists an index $k$ such that $S_k$ has full row rank. In this case, if $d'$ is the smallest index $k$ for which $S_k$ has full row rank, then $d'$ is the largest right minimal index of $M(\la)$.
\end{corollary}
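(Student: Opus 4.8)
The plan is to reduce everything to the already-established Corollary \ref{cor1:Mdfull} together with the rank/nullity bookkeeping of Theorem \ref{thm:ranktheorem} and Corollary \ref{dprime}. The key observation that makes the full-row-rank condition on $S_k$ the ``right'' object is the identity $r_k + n_k = k(m+n)$ from \eqref{eq:rn}: $S_k$ has full row rank $(k+d)m$ precisely when its right nullity satisfies $n_k = k(m+n) - (k+d)m = kn - dm$.

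First I would prove the ``only if'' direction. Assume $M(\la)$ is a minimal basis with $\rank M_d = m$. By the remark following Corollary \ref{cor1:Mdfull}, all row degrees equal $d$, so $M_{hr} = M_d$ and $\sum_{i=1}^m d_i = md$; moreover $M(\la)$ has full row rank $m$, so Corollaries \ref{dprime} and \ref{degreesum} apply. Let $d'$ be the maximum right minimal index. By Corollary \ref{cor1:Mdfull} (or directly from \eqref{dsum} with $\sum \varepsilon_j = \sum d_i = md$), we get $r_{d'} = m(d+d')$, which is exactly $(d'+d)m$, i.e.\ $S_{d'}$ has full row rank. This proves existence of an index $k$ with $S_k$ of full row rank, and by Lemma \ref{kl} the same holds for every $k \ge d'$.

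Next I would prove the ``if'' direction. Suppose $S_k$ has full row rank $(k+d)m$ for some $k$. First, this forces $M_d$ to have full row rank $m$: the bottom block row of $S_k$ is $[\,0 \mid M_d\,]$, so if $S_k$ has full row rank then $M_d$ does too (this is the argument already used inside the proof of Lemma \ref{kl}). Having $\rank M_d = m$, we are in the setting of Corollary \ref{cor1:Mdfull}, so it remains only to check the rank condition $r_{d'} = m(d+d')$ there, where $d'$ is the smallest index with $r_{j+1} = r_j + m$. Since $M_d$ has full row rank $m$, $M(\la)$ has normal rank $m$ and a right null-space of dimension $n$, so Theorem \ref{thm:ranktheorem} and Corollary \ref{dprime} apply; in particular $d'$ is well defined and finite, and for all $j \ge d'$ we have $r_{j+1} - r_j = m$. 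From $r_j + n_j = j(m+n)$ one sees that $S_j$ having full row rank is equivalent to $n_j = jn - dm$, and since the increments $n_{j+1} - n_j$ are nondecreasing in $j$ for $j$ small and equal to $n$ for $j \ge d'$ (by \eqref{sum} and Corollary \ref{dprime}), once $S_k$ attains full row rank all $S_j$ with $j \ge \max(k,d')$ do as well, and in fact $S_{d'}$ already has full row rank: for $j \ge d'$, $n_{j+1} - n_j = n = (j+1)n - jn$, so the quantity $n_j - (jn - dm)$ is constant for $j \ge d'$; since it is $0$ for large $j$ (where $S_j$ has full row rank) it is $0$ at $j = d'$, giving $r_{d'} = d'(m+n) - n_{d'} = d'(m+n) - (d'n - dm) = m(d+d')$. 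Hence Corollary \ref{cor1:Mdfull} yields that $M(\la)$ is a minimal basis, and $\rank M_d = m$ was already shown. Finally, the last sentence of the statement — that the smallest $k$ with $S_k$ of full row rank equals the largest right minimal index — follows because, as just computed, $S_j$ has full row rank iff $n_j - (jn - dm) = 0$; for $j < d'$ the increments $n_{j+1} - n_j = \sum_{i=0}^{j} \alpha_i$ are strictly less than $n$ (since not all $n$ minimal indices have been accounted for), so $n_j - (jn-dm)$ is strictly decreasing through negative-to-zero values and first hits $0$ exactly at $j = d'$.

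The main obstacle I anticipate is not any single deep step but rather being careful with the bookkeeping that the smallest index with $S_k$ full row rank coincides with the smallest index $d'$ of Corollary \ref{dprime} (defined via $r_{k+1} - r_k = m$) and with the largest right minimal index; one must make sure the monotonicity of the partial sums $\sum_{i=0}^{j}\alpha_i$ and the initialization $n_0 = 0$, $n_1 = \alpha_0$ are used correctly so that ``full row rank of $S_k$'' cannot occur before $k = d'$. A minor point to handle cleanly is that the existence of \emph{some} $k$ with $S_k$ of full row rank already guarantees $M(\la)$ has full row (normal) rank $m$, which is the hypothesis needed to invoke Theorem \ref{thm:ranktheorem} in the first place; this is exactly the ``$[\,0\mid M_d\,]$ has full row rank'' observation, so no circularity arises.
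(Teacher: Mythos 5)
Your proposal is correct and follows essentially the same route as the paper: reduce to Corollary \ref{cor1:Mdfull}, extract $\rank M_d=m$ from the bottom block row $[\,0\mid M_d\,]$ of $S_k$, and use the monotonicity of the increments coming from Theorem \ref{thm:ranktheorem} (you phrase it via the nullity increments $n_{j+1}-n_j=\sum_{i\le j}\alpha_i$, the paper via the concavity $r_j-r_{j-1}\ge r_{j+1}-r_j$, which is the same fact) to identify the smallest full-row-rank index with $d'$. The only blemish is the phrase ``decreasing through negative-to-zero values'': the quantity $n_j-(jn-dm)$ is always nonnegative and decreases from $dm$ to $0$, so it should read ``positive-to-zero''; this does not affect the argument.
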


\begin{proof}
If $M(\la)$ is a minimal basis with $\rank M_d =m$, then Corollary \ref{cor1:Mdfull} implies $r_{d'} = m (d+d')$, which is the number of rows of $S_{d'}$. Then $S_{d'}$ has full row rank.

Conversely, if there exists an index $k$ such that $S_k$ has full row rank, then $M_d$ has full row rank, because the last block row of $S_k$ is $[0 \;|\; M_d]$, which also implies that $M(\la)$ has full row normal rank. Let $k_0$ be the smallest index $k$ such that $S_k$ has full row rank and denote by $r_k$ the rank of any Sylvester matrix $S_k$. Then, according to Lemma \ref{kl}, $S_{k_0 + 1}$ also has full row rank and their ranks satisfy
\begin{equation} \label{eq:innercor2}
r_{k_0 + 1} - r_{k_0} = m.
\end{equation}
However, $r_{k_0-1} < (k_0 -1 +d) m$, because $S_{k_0-1}$ has not full row rank. Therefore, $r_{k_0} - r_{k_0 - 1} > m$ and, so, $r_{k +1} - r_{k} > m$ for all $k \leq k_0 -1$, since Theorem \ref{thm:ranktheorem} implies $r_{j} - r_{j-1} \geq r_{j +1} - r_{j}$ for all $j\geq 1$ because $\alpha_j \geq 0$. Therefore, $k_0$ is the smallest index $k$ such that $r_{k + 1} = r_{k} + m$, that is, $k_0 = d'$ in Corollary \ref{cor1:Mdfull} and $r_{d'} = m\, (d' + d)$, since $S_{k_0} = S_{d'}$ has full row rank. So, Corollary \ref{cor1:Mdfull} implies that $M(\la)$ is a minimal basis. In addition, observe that also $k_0 = d'$ in Corollary \ref{dprime} and, so, $k_0 = d'$ is the largest right minimal index of $M(\la)$.
\end{proof}

We illustrate Theorems \ref{thm:ranktheorem} and \ref{th:finitenumbranks}, and Corollaries \ref{dprime}, \ref{cor1:Mdfull}, and \ref{cor2:Mdfull} with three simple examples. With the purpose that the reader can easily check that these results yield the right outcome, in our three examples Theorem \ref{minbasis_th} also allows us to see very easily whether $M(\la)$ is a minimal basis or not. In addition, we display a matrix $N(\la)$ such that the columns of $N(\la)^T$ are a minimal basis of $\mathcal{N}_r (M)$ (and, so, $N(\la)$ is a minimal basis dual to $M(\la)$ whenever $M(\la)$ is a minimal basis), which can again be easily checked by Theorem \ref{minbasis_th}.

\begin{example} \label{ex1} Let $M(\la)\in  \FF[\la]^{6\times 8}$ and $N(\la)\in  \FF[\la]^{2\times 8}$ be given:
$$M(\la) = \left[ \begin{array}{cccc} -I_2 & \la I_2 \\ & -I_2 & \la I_2  \\ & & -I_2 & \la I_2 \end{array} \right] ,
\quad N(\la) = \left[ \begin{array}{cccc} \la^3 I_2 & \la^2 I_2 & \la I_2  & I_2  \end{array} \right] \, .
$$
Then clearly $M(\la)N(\la)^T=0.$
The ranks $r_k$ and nullities $n_k$ of the Sylvester matrices  $S_k$ of $M(\la)$ are:
$$  r_1=8, r_2=16, r_3=24, r_4=30; \quad  n_1=0, n_2=0, n_3=0, n_4=2 \; .
$$
Then, clearly $d'=3$ and \eqref{recur} gives $[\alpha_0, \alpha_1, \alpha_2, \alpha_3] = [0,0,0,2]$ (which agrees with the row degrees of $N(\la)$). The condition \eqref{eq:minimal} becomes
$\rank M_{hr} = 6$ and $r_3-3\cdot m=24-3\cdot 6=6$, which is indeed equal to $d_1+d_2+d_3+d_4+d_5+d_6=6$. Therefore, $M(\la)$ is a minimal basis with right minimal indices $\{3,3\}$. Observe that for proving just that $M(\la)$ is a minimal basis (not for getting that its right minimal indices are $\{3,3\}$), one can use simply Corollary \ref{cor2:Mdfull} because the leading coefficient of $M(\la)$ has clearly full rank. In this case the numbers of rows of $S_k$ for $k=1,2,3$ are $12,18,24$, respectively, which implies that $S_3$ has full row rank and that the largest right minimal index of $M(\la)$ is $3$.
\end{example}

The second example corresponds to a polynomial matrix that is not a minimal basis and has three different right minimal indices.
\begin{example} Let $M(\la)\in  \FF[\la]^{4\times 7}$ and $N(\la)\in  \FF[\la]^{3\times 7}$ be given:
$$M(\la) = \left[ \begin{array}{cc|cc|cccc} \la & 0 & & \\ \hline & & -1 & \la \\ \hline & & & & -1 & \la \\ & & & & & -1 & \la \end{array} \right] ,
\quad N(\la) = \left[ \begin{array}{cc|cc|ccc} 0 & 1 & & \\ \hline & & \la & 1 \\ \hline & & & & \la^2 & \la & 1 \end{array} \right] .
$$
Then clearly $M(\la)N(\la)^T=0$. The ranks $r_k$ and nullities $n_k$ of the Sylvester matrices  $S_k$ of $M(\la)$ are:
$$  r_1=6, r_2=11, r_3=15; \quad  n_1=1, n_2=3, n_3=6 \, .
$$
Then clearly $d'=2$ and \eqref{recur} gives $[\alpha_0,\alpha_1,\alpha_2]= [1,1,1]$ (which agrees with the row degrees of $N(\la)$). The condition \eqref{eq:minimal} becomes
$\rank M_{hr} = 4$ and $r_2-2\cdot m=11-2\cdot 4=3$, which is not equal to $d_1+d_2+d_3+d_4=1+1+1+1=4$. Therefore $M(\la)$ is not a minimal basis. Observe that for proving just that $M(\la)$ is not a minimal basis (not for getting that its right minimal indices are $\{0,1,2\}$), one can use simply Corollary \ref{cor1:Mdfull} because the leading coefficient of $M(\la)$ has clearly full rank.
\end{example}

In the previous two examples $M(\la)$ has degree $1$ and all its row degrees equal. The polynomial matrix $M(\la)$ in the next example does not satisfy any of these two properties.

\begin{example} \label{ex3} Let $M(\la)\in  \FF[\la]^{6\times 8}$ and $N(\la)\in  \FF[\la]^{2\times 8}$ be given:
$$M(\la) = \left[ \begin{array}{cccc} -I_2 & \la I_2 \\ & -I_2 & \la I_2  \\ & & -I_2 & \la^2 I_2 \end{array} \right] ,
\quad N(\la) = \left[ \begin{array}{cccc} \la^4 I_2 & \la^3 I_2 & \la^2 I_2  & I_2  \end{array} \right] \, .
$$
Then clearly $M(\la)N(\la)^T=0.$
The ranks $r_k$ and nullities $n_k$ of the Sylvester matrices  $S_k$ of $M(\la)$ are:
$$
r_1 = 8, r_2 = 16, r_3 = 24, r_4 = 32, r_5 = 38; \quad
n_1 = 0, n_2 = 0, n_3 = 0, n_4 = 0, n_5 = 2 \, .
$$
Then clearly $d'=4$ and \eqref{recur} gives $[\alpha_0, \alpha_1, \alpha_2, \alpha_3, \alpha_4] =[0,0,0,0,2]$ (which agrees with the row degrees of $N(\la)$). The condition \eqref{eq:minimal} becomes $\rank M_{hr} = 6$ and $r_4 - 4\cdot m = 32 - 4\cdot 6 = 8$, which is equal to $d_1 + d_2 + d_3 + d_4 + d_5 + d_6 = 8$. Therefore, $M(\la)$ is a minimal basis. In this case the leading coefficient of $M(\la)$ (the one corresponding to degree $2$) has not full row rank and Corollaries \ref{cor1:Mdfull} and \ref{cor2:Mdfull} cannot be used.
\end{example}

We emphasize that in the examples above the rank conditions are completely in terms of the coefficient matrices of $M(\la)$ via its Sylvester matrices, and that the minimal bases $N(\la)$ are displayed only for the purpose of comparison.

\section{Full-Sylvester-rank polynomial matrices and their properties}
\label{sect:RankProp} In this section we characterize the polynomial matrices of size $m \times (m+n)$ and degree at most $d$ all of whose Sylvester matrices $S_k$ defined in \eqref{eq:Sylvester} have full rank, either full column rank when $S_k$ has more rows than columns or full row rank otherwise. We advance that such matrices are always minimal bases and that satisfy other additional properties.
The {\em ceiling} function of a real number $x$ is often used in the rest of this paper and is denoted by $\lceil x \rceil$. Recall that $\lceil x \rceil$ is the smallest integer that is larger than or equal to $x$.

The following definition will allow us to refer to the property of interest in this section in a concise way.
\begin{definition} \label{def.fullsylrank} Let $M(\la) \in \FF[\la]^{m \times (m+n)}$ be a polynomial matrix of degree at most $d$, let $S_k$ for $k=1,2,\ldots$ be its Sylvester matrices, and let $r_k$ be the rank of $S_k$. The polynomial matrix $M(\la)$ is said to have full-Sylvester-rank if all the matrices $S_k$ have full rank, i.e., if $r_k = \min\{(k+d)m \, , \, k(m+n)\}$ for $k=1,2,\ldots$.
\end{definition}

It is necessary and sufficient to check at most two ranks for determining whether a polynomial matrix has full-Sylvester-rank or not, as a consequence of Lemmas \ref{lk} and \ref{kl}. This is stated in Lemma \ref{lemm.two-ranks}.

\begin{lemma} \label{lemm.two-ranks}Let $M(\la) \in \FF[\la]^{m \times (m+n)}$ be a polynomial matrix of degree at most $d$, let $S_k$ for $k=1,2,\ldots$ be its Sylvester matrices, and let
\begin{equation} \label{eq.ktprime}
k' := \left\lceil \frac{m d}{n} \right\rceil \quad \mbox{and} \quad n k' = m d + t, \quad \mbox{where $0\leq t < n$.}
\end{equation}
Then the following statements hold.
\begin{itemize}
\item[\rm (a)] $k'$ is the smallest index $k$ for which the number of columns of $S_k$ is larger than or equal to the number of rows of $S_k$.
\item[\rm (b)] If $k' > 1$ and $t>0$, then $M(\la)$ has full-Sylvester-rank if and only if $S_{k'-1}$ has full column rank and $S_{k'}$ has full row rank.
\item[\rm (c)] If $k' = 1$ or $t=0$, then $M(\la)$ has full-Sylvester-rank if and only if $S_{k'}$ has full row rank.
\end{itemize}
\end{lemma}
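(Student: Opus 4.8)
The plan is to prove the three statements essentially by unwinding the definitions of $k'$ and $t$ and then invoking the two monotonicity lemmas, Lemma~\ref{lk} and Lemma~\ref{kl}, which control how full-rankness propagates up and down the chain of Sylvester matrices.

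For part (a), recall that $S_k$ has $(k+d)m$ rows and $k(m+n)$ columns, so the number of columns is at least the number of rows precisely when $k(m+n)\geq (k+d)m$, i.e.\ when $kn\geq md$, i.e.\ when $k\geq md/n$. Since $k$ ranges over positive integers, the smallest such $k$ is $\lceil md/n\rceil=k'$, which is exactly the definition in \eqref{eq.ktprime}. This also shows that for all $k<k'$ the matrix $S_k$ is strictly ``tall'' (more rows than columns), while for $k\geq k'$ it is ``wide or square'' (at least as many columns as rows); moreover $t=nk'-md$ measures the excess of columns over rows in $S_{k'}$, since the column count minus the row count of $S_{k'}$ equals $k'(m+n)-(k'+d)m=k'n-md=t$. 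In particular $S_{k'}$ is square exactly when $t=0$.

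For part (c), suppose $k'=1$ or $t=0$. If $S_{k'}$ has full row rank, then by Lemma~\ref{kl} every $S_\ell$ with $\ell>k'$ also has full row rank, and since these matrices are wide, full row rank is their maximal (hence full) rank. For the matrices $S_\ell$ with $\ell<k'$ we must show they have full \emph{column} rank: if $t=0$ then $S_{k'}$ is square and full row rank means $S_{k'}$ is nonsingular, so it has full column rank as well, whence Lemma~\ref{lk} gives full column rank for all $S_\ell$ with $\ell<k'$; if instead $k'=1$ there are no indices $\ell$ with $1\le\ell<k'$, so there is nothing further to check. Conversely, if $M(\la)$ has full-Sylvester-rank then in particular $S_{k'}$, being wide, has full row rank. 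Hence the equivalence in (c).

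For part (b), suppose $k'>1$ and $t>0$, so $S_{k'}$ is strictly wide and $S_{k'-1}$ is strictly tall. The ``only if'' direction is immediate from the definition of full-Sylvester-rank. For the ``if'' direction, assume $S_{k'-1}$ has full column rank and $S_{k'}$ has full row rank. By Lemma~\ref{lk}, full column rank of $S_{k'-1}$ forces full column rank of every $S_\ell$ with $1\le \ell<k'$, and each such $S_\ell$ is tall, so this is its full rank. By Lemma~\ref{kl}, full row rank of $S_{k'}$ forces full row rank of every $S_\ell$ with $\ell>k'$, and each such $S_\ell$ is wide, so this is its full rank. Together with the hypothesis on $S_{k'}$ itself this covers all indices $k=1,2,\ldots$, so $M(\la)$ has full-Sylvester-rank.

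The only mild subtlety --- and the step I would be most careful about --- is bookkeeping the boundary cases in (c): one must separately handle $t=0$ (where squareness of $S_{k'}$ converts full row rank into full column rank, feeding Lemma~\ref{lk} downward) and $k'=1$ (where the downward direction is vacuous); conflating the two, or forgetting that Lemma~\ref{lk} as stated requires $k>1$, would leave a gap. Everything else is a direct application of the dimension count in (a) together with the two propagation lemmas.
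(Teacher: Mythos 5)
Your proof is correct and follows essentially the same route as the paper's: the dimension count $(k+d)m\le k(m+n)\iff k\ge md/n$ for part (a), and Lemmas \ref{lk} and \ref{kl} to propagate full column rank downward from $S_{k'-1}$ and full row rank upward from $S_{k'}$, with the same case split for $t=0$ versus $k'=1$ in part (c). Your version merely spells out the bookkeeping (e.g., that $t$ is the column excess of $S_{k'}$) in more detail than the paper does.
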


\begin{proof}
Part (a) follows from the size of $S_{k}$ displayed in \eqref{eq:Sylvester}, because $(k+d) m \leq k(m+n)$ is equivalent to $d m/n \leq k$, and this is equivalent to $k' \leq k$. Part (b) follows from Lemmas \ref{lk} and \ref{kl}, together with the fact that if $t>0$ then $S_{k'}$ has strictly more columns than rows, while $S_{k'-1}$ has strictly less columns than rows. Note that $S_{k'-1}$ is defined since $k'>1$. In part (c) we have two scenarios. If $t=0$, then $S_{k'}$ is a square matrix and the result follows again from Lemmas \ref{lk} and \ref{kl} for any value of $k'$. If $k'=1$ and $t>0$, the same argument as in part (b) proves the result, with the only difference that in this case $S_{k'-1}$ is not defined.
\end{proof}

Note that Corollary \ref{cor2:Mdfull} immediately implies that polynomial matrices with full-Sylvester-rank are minimal bases whose leading coefficient has full rank. So, they have full row normal rank and, since the ranks $r_k$ of their Sylvester matrices are given by $r_k = \min\{(k+d)m \, , \, k(m+n)\}$, their right minimal indices are fixed by the recurrence in Theorem \ref{thm:ranktheorem}. This leads to a characterization of full-Sylvester-rank matrices in terms of their {\em complete eigenstructure}, i.e., their finite and infinite elementary divisors\footnote{In this paper the infinite elementary divisors of a polynomial matrix $M(\la)$ with degree at most $d$ are the elementary divisors associated to the eigenvalue $0$ of the {\em reversal polynomial matrix} $\mbox{rev}_d M(\la) := \la^d M(1/\la)$ \cite{DDM}.} and their left and right minimal indices. This characterization together with other properties are presented in Theorem \ref{thm.propsfullSylvrank}.

\begin{theorem} \label{thm.propsfullSylvrank} Let $M(\la) = M_0 + M_1 \la + \cdots + M_d \la^d \in \FF[\la]^{m \times (m+n)}$ be a polynomial matrix of degree at most $d$, let $\alpha_k$ be the number of right minimal indices of $M(\la)$ equal to $k$, and let $k'$ and $t$ be defined as in \eqref{eq.ktprime}. Then the following statements hold.
\begin{itemize}
\item[\rm (a)] $M(\la)$ has full-Sylvester-rank if and only if the complete eigenstructure of $M(\la)$ consists only of the following right minimal indices
\begin{equation} \label{eq.statfullSylrank}
\alpha_{k'-1} = t, \quad \alpha_{k'} = n-t, \quad \mbox{and} \quad  \alpha_j = 0 \; \; \mbox{for $j\notin \{k'-1,k'\}$}.
\end{equation}
\item[\rm (b)] If $M(\la)$ has full-Sylvester-rank, then $M(\la)$ is a minimal basis with $\rank M_d = m$, i.e., with all its row degrees equal to $d$.

\item[\rm (c)] If $M(\la)$ has full-Sylvester-rank, then the degree of any minimal basis dual to $M(\la)$ is equal to $k'$. That is, with the notation of Corollaries \ref{dprime} and \ref{degreesum}, $d' = k'$ holds for full-Sylvester-rank matrices.
\end{itemize}

\end{theorem}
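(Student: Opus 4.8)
The plan is to prove (b) first, because it identifies $M(\la)$ as a minimal basis of full row normal rank and thereby makes the whole apparatus of Section \ref{sec.newchar} available; then to prove the two directions of (a) via the rank recursion of Theorem \ref{thm:ranktheorem}; and finally to read (c) off (a).

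For (b): if $M(\la)$ has full-Sylvester-rank then $S_{k'}$ has full rank, and since $S_{k'}$ is square when $t=0$ and has strictly more columns than rows when $t>0$, in both cases it has full row rank. Corollary \ref{cor2:Mdfull} then gives at once that $M(\la)$ is a minimal basis with $\rank M_d=m$, and $\rank M_d=m$ forces every row of $M(\la)$ to have degree exactly $d$, so all row degrees equal $d$ and $M_{hr}=M_d$. In particular $M(\la)$ has full row normal rank $m$, its right null-space has dimension $n$, and Theorem \ref{thm:ranktheorem} together with Corollaries \ref{dprime} and \ref{degreesum} apply.

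For the forward direction of (a): given full-Sylvester-rank, (b) makes $M(\la)$ a minimal basis with $\rank M_d=m$, so $M(\la_0)$ has full row rank for every $\la_0\in\overline\FF$ by Theorem \ref{minbasis_th} (hence no finite elementary divisors), $\rev_d M(0)=M_d$ has full row rank (hence no infinite elementary divisors), and full row normal rank leaves no left minimal indices; thus the complete eigenstructure reduces to the right minimal indices. To identify these I would substitute $r_k=\min\{(k+d)m,\,k(m+n)\}$ into $n_k=k(m+n)-r_k$ and then into $\alpha_k=n_{k-1}-2n_k+n_{k+1}$ from Theorem \ref{thm:ranktheorem}, split according to $k\le k'-2$, $k=k'-1$, $k=k'$, $k\ge k'+1$, and use $nk'=md+t$ in the two middle cases; this yields exactly \eqref{eq.statfullSylrank}. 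For the converse, assuming \eqref{eq.statfullSylrank}: the absence of left minimal indices gives full row normal rank $m$, so Theorem \ref{thm:ranktheorem} applies; from \eqref{eq.statfullSylrank} and $n_{k+1}-n_k=\sum_{i=0}^k\alpha_i$ (equation \eqref{sum}) with $n_0=0$ one computes $n_k=0$ for $k\le k'-1$, $n_{k'}=t$, and $n_{k'+j}=t+jn$ for $j\ge0$, whence $r_k=k(m+n)-n_k$, and using $k'(m+n)=(k'+d)m+t$ one checks $r_k=\min\{(k+d)m,\,k(m+n)\}$ for every $k$, i.e.\ full-Sylvester-rank.

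For (c): by (a) the right minimal indices are $t$ copies of $k'-1$ and $n-t$ copies of $k'$, and since $t<n$ we have $n-t\ge1$, so the largest right minimal index is $k'$; as the largest right minimal index of $M(\la)$ equals the degree of any dual minimal basis (by the properties of dual minimal bases recalled in Section \ref{Sec:Invariants}) and also equals $d'$ (Corollary \ref{dprime}), we conclude $d'=k'$. I expect no genuine conceptual obstacle here; the real work is the bookkeeping in the case analysis that ties $k'$, $t$, $m$, $n$, $d$ together through $nk'=md+t$, and in particular handling the degenerate configurations $k'=1$ (where the constant right minimal indices are counted by $\alpha_0$ and $S_{k'-1}$ is undefined) and $t=0$ (where $S_{k'}$ is square and $\alpha_{k'-1}=0$), which must be checked separately but present no difficulty.
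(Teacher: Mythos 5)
Your proposal is correct and follows essentially the same route as the paper: Corollary \ref{cor2:Mdfull} to get (b), elimination of all eigenstructure except the $n$ right minimal indices, the rank/nullity bookkeeping of Theorem \ref{thm:ranktheorem} with $nk'=md+t$ in both directions of (a), and reading (c) off the fact that the dual row degrees are the right minimal indices. The only difference is organizational (you extract (b) as a first step, while the paper proves it at the start of the proof of (a)), so there is nothing substantive to add.
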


\begin{proof}
Proof of (a). We assume first that $M(\la)$ has full-Sylvester-rank. Then for all $k\geq k'$ the Sylvester matrix $S_k$ of $M(\la)$ has full row rank, since $S_k$ has full rank and does not have more rows than columns. Therefore, Corollary \ref{cor2:Mdfull} implies that $M(\la)$ is a minimal basis and that $M_d$ has full row rank. These properties imply in turn that the complete eigenstructure of $M(\la)$ consists only of $n$ right minimal indices, since $M(\la)$ has not finite elementary divisors as a consequence of Theorem \ref{minbasis_th}, $M(\la)$ has not infinite elementary divisors as a consequence of $\rank M_d = m$, $M(\la)$ has not left minimal indices because has full row normal rank, and the number of right minimal indices of $M(\la)$ is $\dim \mathcal{N}_r (M) = m+n - \rank (M) = n$. It remains to determine the $n$ right minimal indices of $M(\la)$.
To this purpose note that the full-Sylvester-rank property together with Lemma \ref{lemm.two-ranks}-(a) imply that $S_k$ has full column rank for all $1 \leq k < k'$ and has full row rank for all $k \geq k'$, which in terms of the rank, $r_k$, and right nullity, $n_k$, of $S_k$ is equivalent to
\begin{equation} \label{eq.fullSylvranknull}
\begin{array}{lll}
r_k = k (m+n), \quad &n_k = 0, \quad & \mbox{for $1 \leq k < k'$, \; and} \\
r_k = (k+d) m, \quad &n_k = nk - dm, \quad & \mbox{for $k \geq k'$.}
\end{array}
\end{equation}
The relations \eqref{eq.fullSylvranknull} can be expressed also in terms of $k'$ and $t$ in \eqref{eq.ktprime} as follows
\begin{equation} \label{eq.fullSylvranknull2}
\begin{array}{lll}
r_k = k (m+n), \quad &n_k = 0, \quad & \mbox{for $1 \leq k < k'$, \; and} \\
r_k = (k+d) m, \quad &n_k = n(k-k') + t, \quad & \mbox{for $k \geq k'$.}
\end{array}
\end{equation}
Finally, from Theorem \ref{thm:ranktheorem} we have that $\alpha_0 = n_1$ and $\alpha_k = n_{k+1} - 2 n_k + n_{k-1}$ for $k \geq 1$, which combined with \eqref{eq.fullSylvranknull2} yields
\begin{equation} \label{eq.fullSylvranknull3}
\begin{array}{lll}
\alpha_k = 0, \quad & \mbox{for $1 \leq k \leq k'-2$,} \\
\alpha_{k' -1} = n_{k'} = t,\\
\alpha_{k'} = n_{k'+1} - 2 n_{k'} = n-t,\\
\alpha_{k} = 0, \quad & \mbox{for $k > k'$,}
\end{array}
\end{equation}
where the last line follows immediately from the fact that the $n$ right minimal indices of $M(\la)$ have been already determined in the previous lines of \eqref{eq.fullSylvranknull3}.
Observe that according to \eqref{eq.ktprime} $k'\geq 1$ and that in the limit case $k' = 1$, i.e., $md \leq n$, we get from \eqref{eq.fullSylvranknull3} $\alpha_{k' -1} = \alpha_0 = n_{1} = t$, which is consistent with the initialization in Theorem \ref{thm:ranktheorem}. In this limit case the first lines in \eqref{eq.fullSylvranknull}, \eqref{eq.fullSylvranknull2}, and \eqref{eq.fullSylvranknull3} are not present. The following table illustrates the nullities, ranks, and $\alpha_k$ numbers for $k = k'-2 , k'-1 , k' , k'+1 , k'+2$:
$$ \begin{array}{c|ccccc} k & k'-2 & k'-1 & k' & k'+1 & k'+2 \\ \hline n_k & 0 & 0 & t & t+n & t+2n \\
r_k & (k'-2)(m+n) & (k'-1)(m+n) & (k'+d)m & (k'+d+1)m & (k'+d+2)m \\
\alpha_k & 0 & t & n-t & 0 & 0  \end{array} $$

Next we prove the sufficiency in part (a). Assume that the complete eigenstructure of $M(\la)$ consists only of the $n$ right minimal indices described in \eqref{eq.statfullSylrank}. Since the number of right minimal indices is precisely $n$, $M(\la)$ has full normal rank, i.e., $\rank (M) = m$. Then, we can apply Theorem \ref{thm:ranktheorem} to $M(\la)$. Note that \eqref{recur} implies \eqref{sum} and, so, the right nullities $n_k$ of the Sylvester matrices are determined from the $\alpha_k$ numbers as follows
\begin{equation} \label{eq.fullSylvranknull4}
n_1 = \alpha_0, \qquad n_{k+1} = n_k + \sum_{i=0}^k \alpha_i \quad \mbox{for $k \geq 1$}.
\end{equation}
This recursion combined with \eqref{eq.statfullSylrank} yields
\[
\begin{array}{ll}
n_k = 0, \quad &\mbox{for $k < k'$},\\
n_k = n (k-k') + t, \quad &\mbox{for $k \geq k'$},
\end{array}
\]
which according to \eqref{eq.fullSylvranknull2} implies that every Sylvester matrix $S_k$ of $M(\la)$ has full rank. This completes the proof of part (a).

Part (b) has been already proved at the beginning of the proof of part (a). Part (c) follows from part (a) and the fact that the row degrees of any minimal basis dual to $M(\la)$ are precisely the right minimal indices of $M(\la)$.
\end{proof}

Theorem \ref{thm.propsfullSylvrank}-(a) allows us to state in Theorem \ref{thm.2necsuffullSylvRank} another necessary and sufficient condition for a polynomial matrix to have full-Sylvester-rank in terms only of its right minimal indices or, equivalently, the degrees of their dual minimal bases.

\begin{theorem} \label{thm.2necsuffullSylvRank} Let $M(\la) = M_0 + M_1 \la + \cdots + M_d \la^d \in \FF[\la]^{m \times (m+n)}$ be a polynomial matrix of degree at most $d$, let $\alpha_k$ be the number of right minimal indices of $M(\la)$ equal to $k$, and let $k'$ and $t$ be defined as in \eqref{eq.ktprime}. Then, $M(\la)$ has full-Sylvester-rank if and only if the right minimal indices of $M(\la)$ are
\begin{equation} \label{eq.2statfullSylrank}
\alpha_{k'-1} = t, \quad \alpha_{k'} = n-t, \quad \mbox{and} \quad  \alpha_j = 0 \; \; \mbox{for $j\notin \{k'-1,k'\}$}.
\end{equation}
\end{theorem}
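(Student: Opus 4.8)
The plan is to derive this directly from Theorem \ref{thm.propsfullSylvrank}-(a), since the only real content that must be established is the equivalence between ``the complete eigenstructure of $M(\la)$ consists \emph{only} of the right minimal indices described by \eqref{eq.2statfullSylrank}'' (which is what Theorem \ref{thm.propsfullSylvrank}-(a) characterizes) and ``the right minimal indices of $M(\la)$ are described by \eqref{eq.2statfullSylrank}'' (which is the weaker-sounding statement here). The forward implication is immediate: if $M(\la)$ has full-Sylvester-rank, Theorem \ref{thm.propsfullSylvrank}-(a) gives the full eigenstructure, and in particular the right minimal indices are as in \eqref{eq.2statfullSylrank}, which is exactly \eqref{eq.2statfullSylrank} verbatim.

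For the converse, I would assume the right minimal indices of $M(\la)$ are as in \eqref{eq.2statfullSylrank} and show this already forces the rest of the eigenstructure to be trivial, so that Theorem \ref{thm.propsfullSylvrank}-(a) applies. The key observation is a counting argument: the prescribed $\alpha_k$'s sum to $\alpha_{k'-1} + \alpha_{k'} = t + (n-t) = n$, so $M(\la)$ has a right null-space of dimension exactly $n$, hence $\rank(M) = m+n-n = m$, i.e. $M(\la)$ has full row normal rank. Full row normal rank immediately rules out left minimal indices. It remains to argue that $M(\la)$ has no finite and no infinite elementary divisors. Here I would appeal to the index-sum (or degree-sum) relations: using Corollary \ref{degreesum}, the sum of the right minimal indices is $\sum_k k\alpha_k = (k'-1)t + k'(n-t) = k'n - t = dm$ by the definition of $t$ in \eqref{eq.ktprime}; on the other hand, for \emph{any} $m\times(m+n)$ polynomial matrix of full row rank $m$ and degree at most $d$, the sum of all the finite and infinite partial multiplicities plus the sum of the minimal indices is bounded by $dm$ (this is the standard index-sum theorem, with equality iff there are no infinite elementary divisors and the degree is exactly $d$ in an appropriate sense). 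Since the right minimal indices alone already account for the full budget $dm$, there is no room left for finite elementary divisors or infinite elementary divisors, so the complete eigenstructure is exactly \eqref{eq.2statfullSylrank} and Theorem \ref{thm.propsfullSylvrank}-(a) yields the full-Sylvester-rank property.

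Alternatively, and perhaps more self-containedly within the paper, I would avoid invoking the general index-sum theorem and instead work directly with the Sylvester-matrix nullities. Assuming only that the right minimal indices are as in \eqref{eq.2statfullSylrank} and that (consequently) $\rank(M)=m$, I can apply Theorem \ref{thm:ranktheorem} and the recursion \eqref{eq.fullSylvranknull4} to compute $n_k = 0$ for $k<k'$ and $n_k = n(k-k') + t$ for $k\ge k'$, exactly as in the sufficiency half of the proof of Theorem \ref{thm.propsfullSylvrank}-(a). Then \eqref{eq:rn} gives $r_k = k(m+n)$ for $k<k'$ and $r_k = k(m+n) - n(k-k') - t = (k+d)m$ for $k\ge k'$ (using $nk' = dm + t$), so every $S_k$ has full rank by Definition \ref{def.fullsylrank} together with Lemma \ref{lemm.two-ranks}-(a). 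This is really just citing the second half of the proof of Theorem \ref{thm.propsfullSylvrank}-(a) with the hypothesis weakened from ``complete eigenstructure is \eqref{eq.statfullSylrank}'' to ``right minimal indices are \eqref{eq.2statfullSylrank}'' — which is harmless, because that half of the argument only ever used the $\alpha_k$ values and the fact that they sum to $n$ (hence $\rank(M)=m$), never the absence of elementary divisors.

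The main obstacle is essentially a bookkeeping one: making precise why prescribing the right minimal indices suffices, i.e. why one need not separately assume that $M(\la)$ has no finite or infinite elementary divisors. The cleanest resolution is the second route above: observe that $\sum_k \alpha_k = n$ forces $\rank(M)=m$, after which Theorem \ref{thm:ranktheorem} and its consequences apply verbatim and the Sylvester nullities are pinned down purely by the $\alpha_k$'s, with no reference to the rest of the eigenstructure. I would therefore phrase the proof as: ``$(\Rightarrow)$ is immediate from Theorem \ref{thm.propsfullSylvrank}-(a). For $(\Leftarrow)$, \eqref{eq.2statfullSylrank} gives $\sum_k \alpha_k = n$, so $\dim\mathcal{N}_r(M) = n$ and $\rank(M) = m$; now repeat the computation in the sufficiency part of the proof of Theorem \ref{thm.propsfullSylvrank}-(a) to conclude that every $S_k$ has full rank.''
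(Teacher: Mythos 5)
Your proposal is correct, and in fact it contains the paper's proof as your ``first route'': the paper proves sufficiency exactly by noting that $\sum_k \alpha_k = n$ forces $\rank(M)=m$ (hence no left minimal indices), and then invoking the Index Sum Theorem \cite[Theorem 6.5]{DDM} for grade $d$ to conclude that, since $(k'-1)t + k'(n-t) = k'n - t = md = \rank(M)\cdot\mathrm{grade}(M)$, there is no room for finite or infinite elementary divisors, so the complete eigenstructure is \eqref{eq.2statfullSylrank} and Theorem \ref{thm.propsfullSylvrank}-(a) applies as a black box. Your preferred ``second route'' is a genuine, if mild, variant: instead of upgrading the hypothesis to ``complete eigenstructure'' via the Index Sum Theorem, you reopen the sufficiency half of the proof of Theorem \ref{thm.propsfullSylvrank}-(a) and observe that it only ever used $\rank(M)=m$ and the values of the $\alpha_k$ (through Theorem \ref{thm:ranktheorem} and the recursion \eqref{eq.fullSylvranknull4}), never the absence of elementary divisors; the nullities $n_k$ are then pinned down, \eqref{eq:rn} gives $r_k = k(m+n)$ for $k<k'$ and $r_k=(k+d)m$ for $k\ge k'$ using $nk'=md+t$, and full-Sylvester-rank follows from Definition \ref{def.fullsylrank}. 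This buys self-containedness (no appeal to the external Index Sum Theorem) and implicitly shows that the sufficiency hypothesis of Theorem \ref{thm.propsfullSylvrank}-(a) was stronger than needed, at the cost of inspecting rather than merely citing that earlier proof. One minor quibble with your first route: the Index Sum Theorem is an exact equality for a grade-$d$ polynomial (with infinite elementary divisors counted relative to grade $d$), not an inequality with equality under side conditions as you phrase it; your ``budget'' conclusion is nonetheless right, and since your final write-up uses the second route this does not affect the proof.
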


\begin{proof}
Theorem \ref{thm.propsfullSylvrank}-(a) implies that if $M(\la)$ has full-Sylvester-rank, then its right minimal indices are those in \eqref{eq.2statfullSylrank}, which proves the necessity. The sufficiency is proved as follows. If the right minimal indices of $M(\la)$ are those in \eqref{eq.2statfullSylrank}, then $\dim \mathcal{N}_r (M) = n$ and $\rank (M) = m$. Therefore, $M(\la)$ has not left minimal indices. In addition, the Index Sum Theorem \cite[Theorem 6.5]{DDM} applied to the polynomial matrix $M(\la)$ of grade $d$ implies that $M(\la)$ has neither finite nor infinite elementary divisors, since the sum of the right minimal indices of $M(\la)$ is
\[
(k' - 1) \, \alpha_{k' -1} + k' \, \alpha_{k'} = (k' - 1) \, t + k' \, (n-t) = -t + k' \, n = m d = \rank(M) \, \mbox{grade} (M) \, .
\]
Therefore, the complete eigenstructure of $M(\la)$ consists only of the right minimal indices in \eqref{eq.2statfullSylrank} and Theorem \ref{thm.propsfullSylvrank}-(a) implies that $M(\la)$ has full-Sylvester-rank.
\end{proof}

\begin{remark} \label{rem.otherminbases}
Observe that Theorem \ref{thm.propsfullSylvrank}-(b) states that full-Sylvester-rank matrices are minimal bases with full rank leading matrix coefficient $M_d$. We emphasize that the converse result is not true: a minimal basis $C(\la) \in \FF[\la]^{m \times (m+n)}$ with degree at most $d$ and leading coefficient $C_d$ of full rank has not necessarily full-Sylvester-rank. This follows immediately from Theorem \ref{thm:dualbasis} because there exist dual minimal bases $C(\la) \in \FF[\la]^{m \times (m+n)}$ and $D(\la) \in \FF[\la]^{n \times (m+n)}$ with the row degrees of $C(\la)$ all equal to $d$ (equivalently $C_d$ has full rank) and with the row degrees of $D(\la)$ having arbitrary values whose sum is $md$. Since the row degrees of $D(\la)$ are the right minimal indices of $C(\la)$, they can be different than those in \eqref{eq.2statfullSylrank} and so $C(\la)$ has not full-Sylvester-Rank.
\end{remark}

\section{Genericity of full-Sylvester-rank matrices and consequences} \label{sec.genericity}
It is well-known that $p\times q$ constant matrices have {\em generically}, i.e., typically or ``almost always'' if the entries are considered as random variables, full rank equal to $\min\{p,q\}$. Therefore, it is natural to expect that {\em generically} all of the Sylvester matrices of a polynomial matrix $M(\la)\in \FF[\la]^{m \times (m+n)}$ have full rank. In other words, it is natural to expect that {\em generically} a polynomial matrix $M(\la)\in \FF[\la]^{m \times (m+n)}$ has full-Sylvester-rank. However, this has to be rigorously proved, since Sylvester matrices are highly structured matrices containing many zero entries and with block columns intimately related each other. The development of such rigorous proof and the analysis of some interesting consequences of this result are the goals of this section. To this purpose, we need to define the precise meaning of {\em genericity}, which in this work is essentially the standard notion in Algebraic Geometry.

We define genericity inside the vector space $\FF[\la]^{m \times (m+n)}_d$ of polynomial matrices of size $m\times (m+n)$ and degree at most $d$, where in this section, and in the rest of the paper, $\FF = \RR$ or $\FF = \CC$. One motivation for considering $\FF[\la]^{m \times (m+n)}_d$ as our ``ambient'' space comes from the applications to backward error analyses of polynomial eigenproblems solved via linearizations or $\ell$-ifications (see \cite{DDM,DDV-l-ifications} for the definition of $\ell$-ification) that we have in mind for the results in this paper. In practice, backward errors are considered arbitrary perturbations, since the only information available on them is their magnitude, which do not increase the degree of the polynomial matrix \cite{blockKron,tissuer-back-err}.

The first step in the definition of genericity is to identify $\FF[\la]^{m \times (m+n)}_d$ with $\RR^{(d+1)m(m+n)}$ when $\FF = \RR$, or
with $\RR^{2\,(d+1)m(m+n)}$ when $\FF = \CC$. If $\FF = \RR$ such identification can be made, for instance, by mapping each polynomial matrix
$M(\la) = M_0 + M_1 \la + \cdots + M_d \la^d \in \FF[\la]^{m \times (m+n)}_d$ into
$\mbox{vec} ([M_0 \; M_1 \; \cdots \; M_d ]) \in \RR^{(d+1)m(m+n)}$, where $\mbox{vec} (\cdot)$ is the standard vectorization operator defined for instance in \cite[Chapter 4]{HoJo}. If $\FF = \CC$, one considers the entrywise real and imaginary parts of each matrix coefficient $M_i$, denoted by $\mbox{Re} (M_i)$ and $\mbox{Im} (M_i)$, respectively, and the identification is made by mapping $M(\la)$ into $\mbox{vec} ([\mbox{Re}(M_0) \; \mbox{Im} (M_0) \; \cdots \allowbreak \; \mbox{Re}(M_d) \; \mbox{Im} (M_d)]) \in \RR^{2(d+1)m(m+n)}$. Next, we recall that an {\em algebraic set} in $\RR^p$ is the set of common zeros of a finite number of multivariable polynomials with $p$ variables and coefficients in $\RR$, and that an algebraic set is {\em proper} if it is not the whole set $\RR^p$. With these concepts at hand, the standard definition of genericity of Algebraic Geometry is as follows: a {\em generic set of $\RR^p$ is a subset of $\RR^p$ whose complement is contained in a proper algebraic set}. This definition extends obviously to the corresponding one of {\em generic set of $\FF[\la]^{m \times (m+n)}_d$}, with $\FF = \RR$ or $\FF = \CC$, through the bijective ``$\mbox{vec}$'' mappings discussed above, since these mappings allow us to define algebraic sets of $\FF[\la]^{m \times (m+n)}_d$. In the sequel, expressions as ``generically the polynomial matrices in $\FF[\la]^{m \times (m+n)}_d$ have the property $\mathcal{P}$'' have the precise meaning of ``the polynomial matrices of $\FF[\la]^{m \times (m+n)}_d$ that satisfy property $\mathcal{P}$ are a  generic set of $\FF[\la]^{m \times (m+n)}_d$''.

Now, we are in the position of stating and proving the main result of this section.

\begin{theorem} \label{thm.gensylvfullrank} Let $\mathrm{Syl}[\la]^{m \times (m+n)}_d \subset \FF[\la]^{m \times (m+n)}_d$ be the set of polynomial matrices of size $m\times (m+n)$, degree at most $d$, and with full-Sylvester-rank. Then the complement of $\mathrm{Syl}[\la]^{m \times (m+n)}_d$ is a proper algebraic set of $\FF[\la]^{m \times (m+n)}_d$ and, so, $\mathrm{Syl}[\la]^{m \times (m+n)}_d$
is a generic set of $\FF[\la]^{m \times (m+n)}_d$.
\end{theorem}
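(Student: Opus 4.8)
The plan is to show that the property of \emph{not} having full-Sylvester-rank is cut out by the vanishing of finitely many polynomials in the coefficients of $M(\la)$, and then to exhibit at least one polynomial matrix that does have full-Sylvester-rank, so that the resulting algebraic set is proper. First I would use Lemma \ref{lemm.two-ranks} to reduce the infinitely many rank conditions in Definition \ref{def.fullsylrank} to at most two. With $k'=\lceil md/n\rceil$ and $nk'=md+t$, $0\le t<n$, as in \eqref{eq.ktprime}, a matrix $M(\la)\in\FF[\la]^{m\times(m+n)}_d$ has full-Sylvester-rank if and only if $S_{k'-1}$ has full column rank and $S_{k'}$ has full row rank (when $k'>1$ and $t>0$), or simply $S_{k'}$ has full row rank (when $k'=1$ or $t=0$).

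Next I would observe that each of these is a ``maximal-minor'' condition. ``$S_{k'-1}$ does not have full column rank'' means that all its $(k'-1)(m+n)\times(k'-1)(m+n)$ minors vanish, and ``$S_{k'}$ does not have full row rank'' means that all its $(k'+d)m\times(k'+d)m$ minors vanish; these sizes are the right ones precisely because of Lemma \ref{lemm.two-ranks}-(a). By the structure of $S_k$ in \eqref{eq:Sylvester}, every entry of $S_k$ is either $0$ or an entry of one of $M_0,\ldots,M_d$, so each such minor is a polynomial with integer coefficients in the entries of $M_0,\ldots,M_d$. Through the identification of $\FF[\la]^{m\times(m+n)}_d$ with $\RR^{(d+1)m(m+n)}$ when $\FF=\RR$, this is a real polynomial in the coordinates on $\FF[\la]^{m\times(m+n)}_d$; when $\FF=\CC$, a complex minor vanishes exactly when its real and imaginary parts, which are real polynomials in the $\mbox{Re}(M_i)$ and $\mbox{Im}(M_i)$ entries, both vanish, so again the vanishing of all the relevant minors defines a real algebraic set in $\RR^{2(d+1)m(m+n)}$. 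Hence the complement of $\mathrm{Syl}[\la]^{m\times(m+n)}_d$ is the locus where $S_{k'-1}$ fails full column rank \emph{or} $S_{k'}$ fails full row rank, which is a union of two algebraic sets (take all pairwise products of the defining polynomials), hence itself an algebraic set of $\FF[\la]^{m\times(m+n)}_d$.

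It remains to prove this algebraic set is proper, i.e., $\mathrm{Syl}[\la]^{m\times(m+n)}_d\neq\emptyset$. For this I would invoke the converse part of Theorem \ref{thm:dualbasis}: the list of $m$ copies of $d$ and the list of $t$ copies of $k'-1$ together with $n-t$ copies of $k'$ have equal sums, since $t(k'-1)+(n-t)k'=k'n-t=md$ by \eqref{eq.ktprime}, so there is a pair of dual minimal bases $M(\la)\in\FF[\la]^{m\times(m+n)}$ and $N(\la)\in\FF[\la]^{n\times(m+n)}$ with these row degrees, respectively. Then $M(\la)$ has degree $d$, so $M(\la)\in\FF[\la]^{m\times(m+n)}_d$, and its right minimal indices are the row degrees of $N(\la)$, that is, exactly the numbers in \eqref{eq.2statfullSylrank}; therefore Theorem \ref{thm.2necsuffullSylvRank} (equivalently Theorem \ref{thm.propsfullSylvrank}-(a)) shows that $M(\la)$ has full-Sylvester-rank. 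When $\FF=\CC$ one may reuse the $M(\la)$ built over $\RR$, since the rank of a real constant matrix does not change when it is regarded over $\CC$. This gives a point outside the algebraic set, so the complement of $\mathrm{Syl}[\la]^{m\times(m+n)}_d$ is a proper algebraic set and $\mathrm{Syl}[\la]^{m\times(m+n)}_d$ is generic by definition.

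The only step with real content is the properness/non-emptiness claim, which I expect to be the crux; the rest is the standard observation that ``not full rank'' is defined by the vanishing of maximal minors, together with the minor bookkeeping of passing from complex minors to real polynomials when $\FF=\CC$. One could alternatively replace the appeal to Theorem \ref{thm:dualbasis} by an explicit full-Sylvester-rank example of the shift type appearing in Examples \ref{ex1}--\ref{ex3}, but the dual-bases route is shorter and sidesteps case analysis on $k'$ and $t$.
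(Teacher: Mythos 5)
Your proposal is correct and follows essentially the same route as the paper: reduce to the two rank conditions of Lemma \ref{lemm.two-ranks}, express rank deficiency by polynomial equations in the coefficients of $M(\la)$ (the paper uses the Gram determinants $\det(S_{k'-1}^* S_{k'-1})$ and $\det(S_{k'} S_{k'}^*)$ where you use maximal minors, an immaterial difference), and prove properness by producing a full-Sylvester-rank matrix via the converse part of Theorem \ref{thm:dualbasis} combined with Theorem \ref{thm.2necsuffullSylvRank}. No gaps.
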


\begin{proof} Let $k'$ and $t$ be defined as in \eqref{eq.ktprime}. We will prove the theorem in the case $k'>1$ and $t>0$. The proofs in the cases $k' = 1$ or $t=0$ are similar and are omitted. Taking into account Lemma \ref{lemm.two-ranks}-(b), we have that
$$
\mathrm{Syl}[\la]^{m \times (m+n)}_d = \{M(\la) \in  \FF[\la]^{m \times (m+n)}_d \, :\, \det(S_{k'-1}^* S_{k'-1}) \cdot \det(S_{k'} S_{k'}^*) \ne 0\},
$$
where $S_{k'-1}$ and $S_{k'}$ are Sylvester matrices of $M(\la)$. The complement of $\mathrm{Syl}[\la]^{m \times (m+n)}_d$ relative to $\FF[\la]^{m \times (m+n)}_d$ is
$$
\left( \mathrm{Syl}[\la]^{m \times (m+n)}_d \right)^c = \{M(\la) \in  \FF[\la]^{m \times (m+n)}_d \, :\, \det(S_{k'-1}^* S_{k'-1}) \cdot \det(S_{k'} S_{k'}^*) = 0\},
$$
which is obviously an algebraic set. More precisely: if $\FF = \RR$, $\left( \mathrm{Syl}[\la]^{m \times (m+n)}_d \right)^c$ is the set of zeros of one multivariable polynomial in the entries of the matrix coefficients $M_i$, $i=0,1,\ldots, d$, of $M(\la)$, and if $\FF = \CC$, the real and imaginary parts of $\det(S_{k'-1}^* S_{k'-1}) \cdot \det(S_{k'} S_{k'}^*) = 0$ are equivalent to two multivariable polynomial equations in the real and imaginary parts of the entries of the matrix coefficients $M_i$, $i=0,1,\ldots, d$, of $M(\la)$. It only remains to prove that $\left( \mathrm{Syl}[\la]^{m \times (m+n)}_d \right)^c$ is proper, i.e., that there is at least one polynomial matrix $M(\la) \in \FF[\la]^{m \times (m+n)}_d$ such that $M(\la) \notin \left( \mathrm{Syl}[\la]^{m \times (m+n)}_d \right)^c$. This follows immediately from Theorem \ref{thm:dualbasis}, which guarantees the existence of dual minimal bases $M(\la) \in  \FF[\la]^{m \times (m+n)}$ and
$N(\la) \in  \FF[\la]^{n \times (m+n)}$ with all the row degrees of $M(\la)$ equal to $d$ (so $M(\la) \in  \FF[\la]^{m \times (m+n)}_d$ ), and with $t$ row degrees of $N(\la)$ equal to $k'-1$ and the other $n-t$ equal to $k'$, because the row degrees of each of these two matrices sum up $md$. Therefore, the right minimal indices of $M(\la)$, which are the row degrees on $N(\la)$, are the ones in \eqref{eq.2statfullSylrank} and, by Theorem \ref{thm.2necsuffullSylvRank}, $M(\la) \in  \mathrm{Syl}[\la]^{m \times (m+n)}_d $ and $M(\la) \notin \left( \mathrm{Syl}[\la]^{m \times (m+n)}_d \right)^c$.
\end{proof}

We have just proved that generically the polynomial matrices in $\FF[\la]^{m \times (m+n)}_d$ have full-Sylvester-rank. This can be combined with Theorem \ref{thm.propsfullSylvrank} to prove that generically the polynomial matrices in $\FF[\la]^{m \times (m+n)}_d$ satisfy other interesting properties, in particular, the property of being minimal bases whose degree-$d$ matrix coefficient has full row rank. These properties are stated in the following corollary, whose simple proof is omitted.

\begin{corollary} \label{cor:genproperties} Let $k'$ and $t$ be defined as in \eqref{eq.ktprime}. Then, the following subsets of $\FF[\la]^{m \times (m+n)}_d$ are generic in $\FF[\la]^{m \times (m+n)}_d$:
\begin{itemize}
\item[\rm (a)] The set of $m \times (m+n)$ polynomial matrices of degree at most $d$ and whose complete eigenstructure consists only of the following right minimal indices
\[
\alpha_{k'-1} = t, \quad \alpha_{k'} = n-t, \quad \mbox{and} \quad  \alpha_j = 0 \; \; \mbox{for $j\notin \{k'-1,k'\}$},
\]
where $\alpha_j$ denotes the number of right minimal indices equal to $j$.
\item[\rm (b)]  The set of $m \times (m+n)$ polynomial matrices of degree at most $d$ that are minimal bases with degree-$d$ matrix coefficient of full row rank.

\item[\rm (c)] The set of $m \times (m+n)$ polynomial matrices of degree at most $d$ that are minimal bases and such that their dual minimal bases have degree equal to $k'$.
\end{itemize}

\end{corollary}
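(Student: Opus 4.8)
The plan is to deduce all three statements from Theorem \ref{thm.gensylvfullrank} combined with Theorem \ref{thm.propsfullSylvrank}, using the elementary observation that any superset of a generic set is again generic. Concretely, if $G \subseteq H \subseteq \FF[\la]^{m\times(m+n)}_d$ and $G$ is generic, then by definition $G^c$ is contained in a proper algebraic set $A$ of $\FF[\la]^{m\times(m+n)}_d$; since $H^c \subseteq G^c \subseteq A$, the set $H$ is generic as well. By Theorem \ref{thm.gensylvfullrank} the set $\mathrm{Syl}[\la]^{m\times(m+n)}_d$ is generic, so it suffices to check, for each of the three sets described in (a), (b), (c), that it contains $\mathrm{Syl}[\la]^{m\times(m+n)}_d$.

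These inclusions are precisely the content of Theorem \ref{thm.propsfullSylvrank}. For part (a), Theorem \ref{thm.propsfullSylvrank}-(a) asserts that every full-Sylvester-rank matrix has complete eigenstructure consisting only of the right minimal indices $\alpha_{k'-1}=t$, $\alpha_{k'}=n-t$, and $\alpha_j=0$ for $j\notin\{k'-1,k'\}$; hence $\mathrm{Syl}[\la]^{m\times(m+n)}_d$ is contained in the set of part (a) (and in fact coincides with it, by Theorem \ref{thm.2necsuffullSylvRank}). For part (b), Theorem \ref{thm.propsfullSylvrank}-(b) gives that every full-Sylvester-rank matrix is a minimal basis with $\rank M_d = m$, so $\mathrm{Syl}[\la]^{m\times(m+n)}_d$ lies inside the set of part (b). For part (c), Theorem \ref{thm.propsfullSylvrank}-(c) gives that every full-Sylvester-rank matrix is a minimal basis whose dual minimal bases have degree $k'$, so $\mathrm{Syl}[\la]^{m\times(m+n)}_d$ lies inside the set of part (c). In each case the required inclusion holds, and genericity is inherited by the superset principle above.

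There is no real obstacle here: once the ``superset of a generic set is generic'' principle is recorded, the corollary is immediate from the already established Theorems \ref{thm.gensylvfullrank} and \ref{thm.propsfullSylvrank}. The only mild bookkeeping concerns the degenerate cases $k'=1$ or $t=0$, where the index $k'-1$ may be absent or some $\alpha$-labels collapse; but Theorems \ref{thm.propsfullSylvrank} and \ref{thm.2necsuffullSylvRank} are phrased so as to cover those cases uniformly, so no separate treatment is needed. This is exactly why the authors omit the proof.
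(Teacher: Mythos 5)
Your proof is correct and is exactly the argument the authors intend (they omit the proof but describe it in the surrounding text, including the observation that the set in (a) equals $\mathrm{Syl}[\la]^{m\times(m+n)}_d$ while the sets in (b) and (c) strictly contain it, so their complements lie inside $\left(\mathrm{Syl}[\la]^{m\times(m+n)}_d\right)^c$). The superset principle plus Theorems \ref{thm.gensylvfullrank} and \ref{thm.propsfullSylvrank} is all that is needed.
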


Observe that the set defined in Corollary \ref{cor:genproperties}-(a) is precisely $\mathrm{Syl}[\la]^{m \times (m+n)}_d$, while $\mathrm{Syl}[\la]^{m \times (m+n)}_d$ is strictly included in the sets $\mathcal{A}$ and $\mathcal{B}$ defined in parts (b) and (c) of Corollary \ref{cor:genproperties}, respectively, as a consequence of the discussion in Remark \ref{rem.otherminbases}. Therefore the complements of these sets satisfy $\mathcal{A}^c \subset \left( \mathrm{Syl}[\la]^{m \times (m+n)}_d \right)^c$ and $\mathcal{B}^c \subset \left( \mathrm{Syl}[\la]^{m \times (m+n)}_d \right)^c$, respectively.

\section{Robustness of minimal bases and of full-Sylvester-rank matrices} \label{sec.smoothness}
This section has three goals: first, to characterize when a minimal basis $M(\la) \in \FF[\la]^{m \times (m+n)}_d$ is robust under perturbations in the sense that all the polynomial matrices in a neighborhood of $M(\la)$ are also minimal bases; second, to estimate the size of such neighborhood; and, third, to prove that full-Sylvester-rank matrices are robust and to estimate the sizes of the corresponding neighborhoods of robustness.

Throughout the rest of the paper the singular values of a constant matrix $A \in \FF^{p \times q}$ are denoted by $\sigma_{1} (A) \geq \sigma_{2} (A) \geq \cdots \geq \sigma_{\min\{p,q\}} (A)$ and the Sylvester matrices of any $P(\la) \in \FF[\la]^{m \times (m+n)}_d$ are denoted by $S_k (P)$ for $k =1, 2,\ldots$. In order to study the questions of interest in this section, we define a norm in $\FF[\la]^{m \times (m+n)}_d$ as follows: the norm of any $P(\la) \in \FF[\la]^{m \times (m+n)}_d$ is $\|S_1 (P)\|_2$,
where $\|A\|_2 = \sigma_{1} (A)$ is the standard spectral norm of the matrix $A$ \cite{stewart-sun}. This norm induces the distance $\|S_1 (P)- S_1 (\widetilde{P})\|_2 = \|S_1 (P - \widetilde{P})\|_2$ between any two polynomial matrices $P(\la), \widetilde{P} (\la) \in \FF[\la]^{m \times (m+n)}_d$.

In this section, as well as in the rest of this manuscript, we will use very often Lemma \ref{C}, whose proof relies on Lemma \ref{A}.
\begin{lemma} \label{A}
Let $A=\left[\begin{array}{c|c|c|c} A_1 & A_2 & \cdots & A_k \end{array} \right]$ where $A_i\in \FF^{m\times n_i}, i=1,\ldots,k$.
Then $$ \max_i \sigma_{1}(A_i) \le \sigma_{1}(A) \le \sqrt{\sum_{i=1}^k \sigma_{1}^2(A_i)} \le \sqrt{k} \cdot \max_i \sigma_{1}(A_i).$$
\end{lemma}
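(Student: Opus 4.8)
The plan is to prove the three inequalities by splitting them into elementary, well-known facts about the spectral norm and then handling the block-row structure explicitly.

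First I would establish the lower bound $\max_i \sigma_1(A_i) \le \sigma_1(A)$. The cleanest way is to observe that each $A_i$ is a submatrix of $A$ obtained by selecting the columns in the $i$-th block, and the largest singular value of a submatrix never exceeds that of the full matrix. Concretely, for any unit vector $x_i \in \FF^{n_i}$, the vector $x$ obtained by padding $x_i$ with zeros in the other block positions is a unit vector in $\FF^{n}$ (where $n = \sum_i n_i$) with $Ax = A_i x_i$, so $\|A_i x_i\|_2 = \|Ax\|_2 \le \sigma_1(A)$; taking the supremum over $x_i$ gives $\sigma_1(A_i) \le \sigma_1(A)$, and then the maximum over $i$.

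Next I would prove the middle inequality $\sigma_1(A) \le \sqrt{\sum_{i=1}^k \sigma_1^2(A_i)}$. Writing an arbitrary unit vector $x \in \FF^n$ in block form $x = [x_1^T \mid \cdots \mid x_k^T]^T$ conformally with the column partition, we have $Ax = \sum_{i=1}^k A_i x_i$, hence by the triangle inequality and then Cauchy--Schwarz,
$$
\|Ax\|_2 \le \sum_{i=1}^k \|A_i x_i\|_2 \le \sum_{i=1}^k \sigma_1(A_i)\,\|x_i\|_2 \le \Big(\sum_{i=1}^k \sigma_1^2(A_i)\Big)^{1/2}\Big(\sum_{i=1}^k \|x_i\|_2^2\Big)^{1/2}.
$$
Since $\sum_i \|x_i\|_2^2 = \|x\|_2^2 = 1$, taking the supremum over unit $x$ yields the claim. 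Finally, the rightmost inequality $\sqrt{\sum_{i=1}^k \sigma_1^2(A_i)} \le \sqrt{k}\cdot\max_i \sigma_1(A_i)$ is immediate, since each of the $k$ summands is bounded above by $(\max_i \sigma_1(A_i))^2$.

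I do not anticipate any real obstacle here; the only point that requires a moment of care is keeping the column-partition of the vector $x$ consistent with the block-column partition of $A$, so that the identity $Ax = \sum_i A_i x_i$ is valid. Everything else is a direct application of the variational characterization $\sigma_1(B) = \sup_{\|x\|_2=1}\|Bx\|_2$ together with the triangle and Cauchy--Schwarz inequalities.
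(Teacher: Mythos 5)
Your proof is correct. Note that the paper does not actually give an argument here: its ``proof'' consists of a citation to two results in Horn and Johnson's \emph{Topics in Matrix Analysis} (Corollary 3.1.3 and Problem 22 on p.~217), which are precisely the standard facts that the norm of a submatrix is bounded by the norm of the matrix and that $\sigma_1(A)^2 \le \sum_i \sigma_1(A_i)^2$ for a block-column partition. What you have done is supply a complete, self-contained elementary derivation of both facts from the variational characterization $\sigma_1(B)=\sup_{\|x\|_2=1}\|Bx\|_2$: the zero-padding argument for the lower bound, and the triangle inequality followed by Cauchy--Schwarz for the middle inequality (one could alternatively get the latter from $\sigma_1(A)^2=\|AA^*\|_2=\bigl\|\sum_i A_iA_i^*\bigr\|_2\le\sum_i\sigma_1(A_i)^2$, which is closer in spirit to the cited Horn--Johnson exercise, but your route is equally valid). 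The final inequality is, as you say, immediate. Your version buys self-containedness at the cost of a few extra lines; the paper's buys brevity by outsourcing to a standard reference. There is no gap.
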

\begin{proof}
This is a simple consequence of \cite[Corollary 3.1.3]{HoJo} and \cite[Problem 22 in p. 217]{HoJo}.
\end{proof}

\begin{lemma} \label{C} Let $P(\la) \in \FF[\la]^{m \times (m+n)}_d$. Then the following inequalities hold for the Sylvester matrices of $P(\la)$:
$$ \| S_1 (P)\|_2 \le \|S_{k} (P)\|_2 \le \sqrt{k} \cdot \| S_1 (P)\|_2.  $$
\end{lemma}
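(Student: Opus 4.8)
The plan is to exploit the block-column structure of the Sylvester matrix $S_k(P)$ together with Lemma~\ref{A}. First I would write
\[
S_k(P) = \left[\begin{array}{c|c|c|c} C_1 & C_2 & \cdots & C_k \end{array}\right],
\]
where $C_j \in \FF^{(k+d)m \times (m+n)}$ denotes the $j$-th block column of $S_k(P)$. From the explicit form \eqref{eq:Sylvester}, the block column $C_j$ has the blocks $P_0, P_1, \ldots, P_d$ stacked (in this order) occupying block rows $j, j+1, \ldots, j+d$, and zero blocks everywhere else. In other words, each $C_j$ is obtained from
\[
S_1(P) = \left[\begin{array}{c} P_0 \\ P_1 \\ \vdots \\ P_d \end{array}\right]
\]
by appending $j-1$ zero block rows on top and $k-j$ zero block rows at the bottom.

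The key observation is then that appending zero rows (or columns) to a matrix changes neither its singular values nor, in particular, its spectral norm. Hence $\sigma_1(C_j) = \sigma_1(S_1(P))$, i.e.\ $\|C_j\|_2 = \|S_1(P)\|_2$ for every $j = 1, \ldots, k$. With this in hand, the result is immediate from Lemma~\ref{A} applied to the column partition above:
\[
\|S_1(P)\|_2 = \max_{1\le j \le k} \|C_j\|_2 \;\le\; \|S_k(P)\|_2 \;\le\; \sqrt{\sum_{j=1}^k \|C_j\|_2^2} \;=\; \sqrt{k}\,\|S_1(P)\|_2,
\]
which is exactly the claimed chain of inequalities.

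There is essentially no hard part here; the only thing to get right is the bookkeeping of which block rows each $C_j$ occupies, so as to confirm that $C_j$ really is just $S_1(P)$ padded with zero block rows and that $j+d \le k+d$ (so the last nonzero block $P_d$ indeed fits within the $(k+d)m$ rows of $S_k(P)$). Once that is checked, invariance of singular values under zero-padding and Lemma~\ref{A} finish the argument. I would keep the write-up to a couple of sentences, since spelling out the zero-padding in full index detail adds nothing of substance.
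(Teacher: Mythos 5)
Your proof is correct and follows exactly the route the paper intends: the paper's own proof simply says the lemma is "a direct consequence of Lemma \ref{A} and the structure of $S_k(P)$," and your write-up supplies precisely the implicit details (each block column of $S_k(P)$ is $S_1(P)$ padded with zero block rows, hence has the same spectral norm). No issues.
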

\begin{proof}
This is a direct consequence of Lemma \ref{A} and the structure of $S_{k} (P)$.
\end{proof}

Next theorem proves that a minimal basis of degree at most $d$ is robust inside $\FF[\la]^{m \times (m+n)}_d$ if and only if its row degrees are all equal to $d$ or, equivalently, if and only if its degree $d$ matrix coefficient has full rank.

\begin{theorem} \label{thm.smoothminbases} Let $M(\la) = M_0 + M_1 \la + \cdots + M_d \la^d \in \FF[\la]^{m \times (m+n)}_d$ be a minimal basis. Then the following statements hold:
\begin{itemize}
\item[\rm (a)] If $\rank M_d < m$, then for all $\epsilon >0$ there exists a polynomial matrix $\widetilde{M}(\la)\in \FF[\la]^{m \times (m+n)}_d$ that is not a minimal basis and satisfies $\|S_1 (M)- S_1 (\widetilde{M})\|_2 < \epsilon$. That is, as close as we want to $M(\la)$ there are polynomial matrices that are not minimal bases.

\item[\rm (b)] If $\rank M_d = m$, then there exists an index $k$ such that $S_k (M)$ has full row rank and every polynomial matrix $\widetilde{M}(\la) = \widetilde{M}_0 + \widetilde{M}_1 \la + \cdots + \widetilde{M}_d \la^d \in \FF[\la]^{m \times (m+n)}_d$ that satisfies
    \begin{equation} \label{eq.smoothneigh1}
    \|S_1 (M)- S_1 (\widetilde{M})\|_2 < \frac{\sigma_{(k+d)m} (S_{k}(M))}{\sqrt{k}}
    \end{equation}
    is a minimal basis with $\rank \widetilde{M}_d =m$.  That is, all the polynomial matrices sufficiently close to $M(\la)$ are minimal bases with full rank leading coefficient.
\end{itemize}
\end{theorem}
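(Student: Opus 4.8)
The plan is to handle the two parts separately, each resting on the finite-rank characterization of minimal bases already established.

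For part (a), assume $\rank M_d < m$. Since $M(\la)$ is a minimal basis, by Theorem~\ref{minbasis_th} it is row reduced, so $\rank M_{hr} = m$; in particular not all row degrees equal $d$, and there is at least one row of index $d_i < d$. The idea is to perturb $M(\la)$ by adding a tiny multiple of $\la^d$ in a position belonging to such a low-degree row, raising that row's degree to $d$ while (for small enough perturbation) keeping the other row degrees fixed. After this perturbation the new highest-row-degree coefficient matrix $\widetilde{M}_{hr}$ has a row coming from the position we perturbed whose contribution to $\widetilde{M}_{hr}$ is an arbitrarily small vector; one checks that one can choose the perturbation so that $\widetilde{M}_{hr}$ fails to have full row rank. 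More carefully: the perturbed degree-$d$ coefficient is $\widetilde{M}_d = M_d + \epsilon' \, E$ for a suitable rank-one $E$; since $\rank M_d < m$ there is room to choose $E$ so that $\widetilde{M}_d$ still does not have full row rank, and then $\widetilde{M}_{hr} = \widetilde{M}_d$ if \emph{all} rows have been pushed to degree $d$ — alternatively, one perturbs only enough entries to keep the argument clean and shows the row-reducedness is destroyed. Once $\widetilde M(\la)$ is not row reduced, Theorem~\ref{minbasis_th} (via Remark~\ref{rem:unicity}) shows it is not a minimal basis, and by making $\epsilon'$ small enough we get $\|S_1(M) - S_1(\widetilde M)\|_2 < \epsilon$.

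For part (b), assume $\rank M_d = m$. By Corollary~\ref{cor2:Mdfull}, since $M(\la)$ is a minimal basis with $\rank M_d = m$, there exists an index $k$ for which $S_k(M)$ has full row rank $(k+d)m$; fix such a $k$. Now let $\widetilde M(\la) \in \FF[\la]^{m\times(m+n)}_d$ satisfy \eqref{eq.smoothneigh1}. By Lemma~\ref{C}, $\|S_k(M) - S_k(\widetilde M)\|_2 = \|S_k(M - \widetilde M)\|_2 \le \sqrt{k}\,\|S_1(M-\widetilde M)\|_2 < \sigma_{(k+d)m}(S_k(M))$. The smallest singular value of a full-row-rank matrix is strictly positive, and a standard perturbation bound for singular values (Weyl's inequality, $|\sigma_i(S_k(M)) - \sigma_i(S_k(\widetilde M))| \le \|S_k(M) - S_k(\widetilde M)\|_2$) then forces $\sigma_{(k+d)m}(S_k(\widetilde M)) > 0$, i.e.\ $S_k(\widetilde M)$ has full row rank. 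Applying Corollary~\ref{cor2:Mdfull} in the reverse direction, $\widetilde M(\la)$ is a minimal basis with $\rank \widetilde M_d = m$. This completes part (b).

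The routine content — Weyl's inequality, the reduction of spectral-norm perturbations of $S_k$ to those of $S_1$ via Lemma~\ref{C}, and the two applications of Corollary~\ref{cor2:Mdfull} — makes part (b) essentially automatic once stated in this order. The genuine care is needed in part (a): one must construct an explicit perturbation that is both arbitrarily small \emph{in the $\|S_1(\cdot)\|_2$ norm} and provably destroys row-reducedness. The cleanest route is to pick a single entry $(i_0,j_0)$ in a row $i_0$ with $d_{i_0} < d$, and a scalar curve $\widetilde M(\la) = M(\la) + s\,\la^d\, e_{i_0} e_{j_0}^T$; for all sufficiently small $s\neq 0$ the row degrees become $d_1,\dots,d_{i_0-1},d,d_{i_0+1},\dots,d_m$, so the new $\widetilde M_{hr}$ agrees with $M_{hr}$ except that its $i_0$-th row is replaced by $s\,e_{j_0}^T$; choosing $s$ appropriately (exploiting $\rank M_d < m$ if necessary, or simply noting that $s\, e_{j_0}^T$ lies in the row space of the other rows of $M_{hr}$ for suitable $j_0$, which can always be arranged after a coordinate choice) makes $\widetilde M_{hr}$ rank-deficient. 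The bookkeeping to guarantee such a $j_0$ exists is the one subtlety, but it follows from the fact that an $(m-1)\times(m+n)$ matrix spans at most an $(m-1)$-dimensional row space, leaving coordinates in which the remaining row can be made dependent.
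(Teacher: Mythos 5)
Your part (b) is correct and is essentially the paper's proof: Corollary \ref{cor2:Mdfull} supplies a full-row-rank $S_k(M)$, Lemma \ref{C} converts the bound on $\|S_1(M-\widetilde M)\|_2$ into one on $\|S_k(M-\widetilde M)\|_2$, Weyl's inequality preserves full row rank of $S_k(\widetilde M)$, and Corollary \ref{cor2:Mdfull} applied to $\widetilde M$ finishes.

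Part (a), however, has a genuine gap, and it is exactly at the point you flag as "the one subtlety." Your perturbation $\widetilde M(\la)=M(\la)+s\,\la^d e_{i_0}e_{j_0}^T$ replaces the $i_0$-th row of $M_{hr}$ by $s\,e_{j_0}^T$, so to destroy row-reducedness you need $e_{j_0}$ to lie in the span of the other $m-1$ rows of $M_{hr}$. Your justification --- that an $(m-1)$-dimensional row space "leaves coordinates in which the remaining row can be made dependent" --- runs in the wrong direction: the smaller that subspace, the less likely it is to contain a standard basis vector, and in general it contains none. Concretely, $M(\la)=\left[\begin{smallmatrix} \la^2 & \la^2+1 & 0\\ 0 & 1 & 1\end{smallmatrix}\right]$ is a minimal basis in $\FF[\la]^{2\times 3}_2$ with $\rank M_2=1<2$; the only low-degree row is the second, the other row of $M_{hr}$ is $(1,1,0)$, and no $e_{j_0}$ is a multiple of it. Indeed, for $j_0=3$ the perturbed matrix is again a minimal basis for every $s\neq 0$, so the construction does not merely fail to be provable --- it fails outright. (A "coordinate choice" does not rescue this: changing column coordinates changes $M(\la)$ itself, whereas the perturbation must be small in the given coordinates.) The repair, which is what the paper does, is to insert a small multiple of an entire existing nonzero row of $M_d$ into a zero row of $M_d$ (or, when $M_d=0$ and $m>1$, the same small vector into two zero rows), which forces two proportional rows in $\widetilde M_{hr}$.

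Separately, your argument says nothing about $m=1$, and that case cannot be handled by destroying row-reducedness at all: a nonzero $1\times(1+n)$ polynomial row is automatically row reduced, whatever its leading coefficient. Here $\rank M_d<m$ means $M_d=0$, i.e.\ every entry has degree $<d$, and the paper instead manufactures a common zero: it picks $\la_\epsilon$ of large modulus so that each $|m_i(\la_\epsilon)/\la_\epsilon^d|$ is tiny and sets $\widetilde M(\la)=M(\la)-\la^d\left[\,m_1(\la_\epsilon)/\la_\epsilon^d\ \cdots\ m_{1+n}(\la_\epsilon)/\la_\epsilon^d\,\right]$, which satisfies $\widetilde M(\la_\epsilon)=0$ and is an arbitrarily small perturbation. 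This is an essential separate case that your proposal omits.
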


\begin{proof}
Proof of (a). Note that $M_d$ has at least one zero row, because otherwise all the row degrees of $M(\la)$ would be equal to $d$, then $M_d$ would be the highest-row-degree coefficient matrix of $M (\la)$ (recall Definition \ref{colred}), and $\rank M_d = m$ by Theorem \ref{minbasis_th}, which is a contradiction.

Assume first $m > 1$, then either $M_d \ne 0$ or $M_d = 0$ (in this latter case the degree of $M(\la)$ is strictly less than $d$). In the case $M_d \ne 0$, $M_d$ has at least one zero row and at least one nonzero row $w_k \ne 0$. A polynomial matrix $\widetilde{M} (\la)$ as in the statement can be constructed as follows: $\widetilde{M} (\la)$ is equal to $M(\la)$ except that one of the zero rows of $M_d$ is replaced by the row vector $(0.5 \epsilon/\|w_k\|_2) \, w_k$. This implies that the highest-row-degree coefficient matrix $\widetilde{M}_{hr}$ of $\widetilde{M} (\la)$ has two linearly dependent rows, so $\widetilde{M} (\la)$ is not a minimal basis by Theorem \ref{minbasis_th}, and that $\|S_1 (M)- S_1 (\widetilde{M})\|_2 = 0.5 \epsilon < \epsilon$. In the case $M_d = 0$, a polynomial $\widetilde{M} (\la)$ as in the statement can be constructed as follows: $\widetilde{M} (\la)$ is equal to $M(\la)$ except that two zero rows of $M_d$ are both replaced by the same arbitrary vector $v_\epsilon$ with norm $\|v_\epsilon \|_2 < \epsilon /2$. Then, $\widetilde{M}_{hr}$ of $\widetilde{M} (\la)$ has two rows equal to $v_\epsilon$, so $\widetilde{M} (\la)$ is not a minimal bases by Theorem \ref{minbasis_th}, and $\|S_1 (M)- S_1 (\widetilde{M})\|_2 = \|[v_\epsilon \, ; \, v_\epsilon]\|_2 < \epsilon$.

The limiting case $m=1$, i.e., when $M(\la)$ has only one row and has degree smaller than $d$, requires a different proof. Note that in this case $\rank M_d < m =1$ is equivalent to $M_d =0$. Let us express $M(\la) = [m_1(\la) \, \cdots \, m_{1+n} (\la)]$ in terms of its entries. From Theorem \ref{minbasis_th}, we know that $M(\la_0) \ne 0$ for any $\lambda_0 \in \overline{\FF}$. Given $\epsilon >0$, define a number $\la_\epsilon$ whose modulus is large enough to satisfy $|m_i (\la_\epsilon) /\la_\epsilon^d| < \epsilon/\sqrt{1+n}$ for $i=1, \ldots, 1+n$. Note that such number exists because all of the scalar polynomials $m_i(\la)$ have degree strictly smaller than $d$. In this case, we construct $\widetilde{M} (\la)$ as follows
$$
\widetilde{M} (\la) = M(\la) - \la^d \, \left[ \frac{m_1 (\la_\epsilon)}{\la_\epsilon^d} \; \cdots \; \frac{m_{1+n} (\la_\epsilon)}{\la_\epsilon^d} \right],
$$
which satisfies $\widetilde{M} (\la_\epsilon) = 0$, so $\widetilde{M} (\la)$ is not a minimal basis, and $\|S_1 (M)- S_1 (\widetilde{M})\|_2 < \epsilon$. This completes the proof of part (a).

Proof of (b). Corollary \ref{cor2:Mdfull} guarantees the existence of a Sylvester matrix $S_k (M)$ with full row rank or, equivalently, with minimal singular value
$\sigma_{(k+d)m} (S_{k}(M)) > 0$. Condition \eqref{eq.smoothneigh1} and Lemma \ref{C} imply
\begin{align*}
\|S_k (M)- S_k (\widetilde{M})\|_2 & = \|S_k (M -\widetilde{M})\|_2 \\ & \leq \sqrt{k} \,
\|S_1 (M -\widetilde{M})\|_2  = \sqrt{k} \,
\|S_1 (M)- S_1 (\widetilde{M})\|_2 \\
& < \sigma_{(k+d)m} (S_{k}(M)),
\end{align*}
which in turns implies that $S_k (\widetilde{M})$ has full row rank by Weyl's perturbation theorem for singular values \cite{stewart-sun}. Then, part (b) follows from applying Corollary \ref{cor2:Mdfull} to $\widetilde{M} (\la)$.
\end{proof}

\begin{remark} \label{rem.onsmoothminbases} The natural choice of $k$ in Theorem \ref{thm.smoothminbases}-(b) is the smallest index $k_0$ for which $S_k(M)$ has full row rank, since in this way the denominator of the right hand side of \eqref{eq.smoothneigh1} is the smallest possible one, which favors a larger estimation of the neighbourhood of robustness. However, note that the numerator plays a nontrivial role and another larger $k$ might be a better choice.
\end{remark}

As a consequence of the proof of Theorem \ref{thm.smoothminbases}-(a) when $M_d=0$, it is obvious that minimal bases are never robust under perturbations that increase their degrees. This simple fact is stated for completeness in the next corollary.
\begin{corollary} \label{cor.largerdegperturb}
Let $M(\la) \in \FF[\la]^{p\times q}$ be any minimal basis with $p < q$ and let $d$ be any integer such that $d > \mbox{\rm deg}(M)$. Then, there exist polynomial matrices in $\FF[\la]^{p \times q}_d$ which are not minimal bases and are as close as we want to $M(\la)$.
\end{corollary}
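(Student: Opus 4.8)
The plan is to obtain this as an immediate specialization of Theorem~\ref{thm.smoothminbases}-(a). Write $m := p$ and $n := q-p$, so that $n \ge 1$ because $p < q$, and regard $M(\la)$ as an element of the vector space $\FF[\la]^{m \times (m+n)}_d$; this is legitimate precisely because $d > \deg(M)$, so $M(\la)$ has degree at most $d$. Crucially, whether a $p\times q$ polynomial matrix with $p < q$ is a minimal basis depends only on the rational subspace spanned by its rows (Remark~\ref{rem:NEWconvention}) and is therefore unaffected by enlarging the ambient degree bound from $\deg(M)$ to $d$. Hence $M(\la)$, viewed inside $\FF[\la]^{m \times (m+n)}_d$, is still a minimal basis, and all the standing hypotheses $m>0$, $n>0$, $d>0$ hold.

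The observation that unlocks part (a) of Theorem~\ref{thm.smoothminbases} is that when we expand $M(\la) = M_0 + M_1\la + \cdots + M_d \la^d$ in this degree-$d$ representation, the leading coefficient is $M_d = 0$, since $\deg(M) < d$; consequently $\rank M_d = 0 < m$, so the hypothesis of Theorem~\ref{thm.smoothminbases}-(a) is met. Applying that theorem, for every $\epsilon > 0$ there is a polynomial matrix $\widetilde{M}(\la) \in \FF[\la]^{m \times (m+n)}_d = \FF[\la]^{p \times q}_d$ that is not a minimal basis and satisfies $\|S_1(M)-S_1(\widetilde{M})\|_2 < \epsilon$. Since the distance used throughout Section~\ref{sec.smoothness} on $\FF[\la]^{m \times (m+n)}_d$ is exactly $\|S_1(M)-S_1(\widetilde{M})\|_2$, this says $\widetilde{M}(\la)$ is as close to $M(\la)$ as we wish, which is the assertion of the corollary. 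One can also note that the $\widetilde{M}(\la)$ produced by that proof genuinely has degree $d$: in the constructions for $M_d = 0$ one fills in zero rows of $M_d$, or, in the scalar-row case $m=1$, subtracts a small constant multiple of $\la^d$; so the failure of robustness really is caused by perturbations that raise the degree, which is the moral content stated before the corollary.

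There is essentially no obstacle here beyond the bookkeeping: the only point requiring a moment's care is the identification of $M(\la) \in \FF[\la]^{p\times q}$ with an element of $\FF[\la]^{p\times q}_d$ and the verification that ``being a minimal basis'' and ``$\rank M_d < m$'' transfer correctly under this reinterpretation, after which Theorem~\ref{thm.smoothminbases}-(a) does all the work.
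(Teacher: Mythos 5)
Your proposal is correct and matches the paper's intent exactly: the paper states this corollary as an immediate consequence of Theorem~\ref{thm.smoothminbases}-(a) in the case $M_d=0$, which is precisely the specialization you carry out (with the harmless extra bookkeeping about embedding $M(\la)$ into $\FF[\la]^{p\times q}_d$).
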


We have proved in Section \ref{sec.genericity} that generically the polynomial matrices in $\FF[\la]^{m \times (m+n)}_d$ have full-Sylvester-rank, which implies that generically the polynomials in $\FF[\la]^{m \times (m+n)}_d$ are minimal bases with all their row degrees equal to $d$ {\em and with the row degrees of the minimal bases dual to them fully determined by} \eqref{eq.2statfullSylrank}. It is not surprising, due to their genericity, that full-Sylvester-rank polynomial matrices are robust. This is established in Theorem \ref{thm.smoothfullSylvrank} together with estimates of the sizes of the robustness neighbourhoods. We advance that Theorem \ref{thm.smoothfullSylvrank} plays a key role in the perturbation theory of dual minimal bases developed in Section \ref{sec.dualminbases}, which is based on the mentioned idea that the row degrees of the minimal bases dual to full-Sylvester-rank matrices (or, equivalently, the right minimal indices of full-Sylvester-rank matrices) are completely fixed (recall Theorems \ref{thm.propsfullSylvrank}-(a) and \ref{thm.2necsuffullSylvRank}) and, thus, remain constant in a robustness neighborhood of any full-Sylvester-rank matrix. This invariance property is lost in any neighborhood of any minimal basis without full-Sylvester-rank, which implies that perturbation results for dual minimal bases are no longer possible in that scenario, unless perturbations with particular properties are considered.

\begin{theorem} \label{thm.smoothfullSylvrank} Let $M(\la) \in \FF[\la]^{m \times (m+n)}_d$ be a polynomial matrix with full-Sylvester-rank and let $k'$ and $t$ be defined as in \eqref{eq.ktprime}. Then the following statements hold:
\begin{itemize}
\item[\rm (a)] If $k' > 1$ and $t >0$, then every $\widetilde{M} (\la) \in \FF[\la]^{m \times (m+n)}_d$ such that
\[
\|S_1 (M)- S_1 (\widetilde{M})\|_2 < \min \left\{
\frac{\sigma_{(k'-1)(m+n)} (S_{k' -1}(M))}{\sqrt{k'-1}} \, , \,
\frac{\sigma_{(k'+d)m} (S_{k'}(M))}{\sqrt{k'}} \right\} \,
\]
has full-Sylvester-rank.

\item[\rm (b)] If $k' = 1$ or $t = 0$,
then every $\widetilde{M} (\la) \in \FF[\la]^{m \times (m+n)}_d$ such that
\[
\|S_1 (M)- S_1 (\widetilde{M})\|_2 <
\frac{\sigma_{(k'+d)m} (S_{k'}(M))}{\sqrt{k'}} \,
\]
has full-Sylvester-rank.
\end{itemize}
\end{theorem}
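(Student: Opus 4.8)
The strategy is to reduce the full-Sylvester-rank property of the perturbed matrix $\widetilde{M}(\la)$ to the full-rank property of at most two of its Sylvester matrices, via Lemma~\ref{lemm.two-ranks}, and then to control the change in those specific Sylvester matrices through Lemma~\ref{C} and Weyl's perturbation theorem for singular values. The two parts (a) and (b) correspond exactly to the two cases distinguished in Lemma~\ref{lemm.two-ranks}, so each part is an application of the same mechanism to the relevant list of Sylvester matrices.

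\textbf{Part (b), the simpler case.} When $k' = 1$ or $t = 0$, Lemma~\ref{lemm.two-ranks}-(c) says that $\widetilde M(\la)$ has full-Sylvester-rank if and only if $S_{k'}(\widetilde M)$ has full row rank. Since $M(\la)$ has full-Sylvester-rank, $S_{k'}(M)$ has full row rank, i.e. $\sigma_{(k'+d)m}(S_{k'}(M)) > 0$. I would then estimate, exactly as in the proof of Theorem~\ref{thm.smoothminbases}-(b),
\[
\|S_{k'}(M) - S_{k'}(\widetilde M)\|_2 = \|S_{k'}(M - \widetilde M)\|_2 \le \sqrt{k'}\,\|S_1(M - \widetilde M)\|_2 = \sqrt{k'}\,\|S_1(M) - S_1(\widetilde M)\|_2,
\]
using Lemma~\ref{C}. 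If $\|S_1(M) - S_1(\widetilde M)\|_2 < \sigma_{(k'+d)m}(S_{k'}(M))/\sqrt{k'}$, the right-hand side is strictly smaller than $\sigma_{(k'+d)m}(S_{k'}(M))$, so by Weyl's theorem $\sigma_{(k'+d)m}(S_{k'}(\widetilde M)) > 0$, i.e. $S_{k'}(\widetilde M)$ has full row rank. Lemma~\ref{lemm.two-ranks}-(c) then gives that $\widetilde M(\la)$ has full-Sylvester-rank.

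\textbf{Part (a).} When $k' > 1$ and $t > 0$, Lemma~\ref{lemm.two-ranks}-(b) says $\widetilde M(\la)$ has full-Sylvester-rank if and only if both $S_{k'-1}(\widetilde M)$ has full column rank and $S_{k'}(\widetilde M)$ has full row rank. Since $M(\la)$ has full-Sylvester-rank, $S_{k'-1}(M)$ has full column rank (so $\sigma_{(k'-1)(m+n)}(S_{k'-1}(M)) > 0$) and $S_{k'}(M)$ has full row rank (so $\sigma_{(k'+d)m}(S_{k'}(M)) > 0$). Applying Lemma~\ref{C} twice, the hypothesis on $\|S_1(M) - S_1(\widetilde M)\|_2$ being below the minimum of the two quoted quantities forces simultaneously
\[
\|S_{k'-1}(M) - S_{k'-1}(\widetilde M)\|_2 \le \sqrt{k'-1}\,\|S_1(M) - S_1(\widetilde M)\|_2 < \sigma_{(k'-1)(m+n)}(S_{k'-1}(M))
\]
and
\[
\|S_{k'}(M) - S_{k'}(\widetilde M)\|_2 \le \sqrt{k'}\,\|S_1(M) - S_1(\widetilde M)\|_2 < \sigma_{(k'+d)m}(S_{k'}(M)).
\]
By Weyl's theorem, $\sigma_{(k'-1)(m+n)}(S_{k'-1}(\widetilde M)) > 0$ and $\sigma_{(k'+d)m}(S_{k'}(\widetilde M)) > 0$, so $S_{k'-1}(\widetilde M)$ has full column rank and $S_{k'}(\widetilde M)$ has full row rank. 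Lemma~\ref{lemm.two-ranks}-(b) applied to $\widetilde M(\la)$ then yields that $\widetilde M(\la)$ has full-Sylvester-rank.

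\textbf{Expected obstacle.} There is no deep obstacle here: once Lemma~\ref{lemm.two-ranks} is in place, the argument is a routine combination of the norm inequality of Lemma~\ref{C} with Weyl's bound $|\sigma_i(A) - \sigma_i(B)| \le \|A - B\|_2$. The only point requiring a small amount of care is the bookkeeping of which singular value index plays the role of "smallest nonzero singular value" in each Sylvester matrix — full column rank of a $(k'-1+d)m \times (k'-1)(m+n)$ matrix means $\sigma_{(k'-1)(m+n)} > 0$, while full row rank of a $(k'+d)m \times k'(m+n)$ matrix means $\sigma_{(k'+d)m} > 0$ — and making sure that when $k' - 1 = 1$ the factor $\sqrt{k'-1} = 1$ and Lemma~\ref{C} still applies trivially, and when $k'-1 \ge 1$ it is covered in general; the case $k'-1 = 0$ does not arise in part (a) since there $k' > 1$.
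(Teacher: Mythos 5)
Your proof is correct and follows essentially the same route as the paper's: reduce the full-Sylvester-rank property of $\widetilde{M}(\la)$ to full rank of $S_{k'-1}(\widetilde M)$ and/or $S_{k'}(\widetilde M)$ via Lemma~\ref{lemm.two-ranks}, bound $\|S_k(M)-S_k(\widetilde M)\|_2$ by $\sqrt{k}\,\|S_1(M)-S_1(\widetilde M)\|_2$ using Lemma~\ref{C}, and conclude with Weyl's perturbation theorem for singular values. No gaps.
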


\begin{proof} Proof of (a). Recall that, from Lemma \ref{lemm.two-ranks}-(b), $S_{k' -1}(M)$ and $S_{k'}(M)$ have full column and full row rank, respectively, which implies that the smallest singular values of these matrices satisfy $\sigma_{(k'-1)(m+n)} (S_{k' -1}(M))>0$ and $\sigma_{(k'+d)m} (S_{k'}(M))>0$. The assumption in part (a) and Lemma \ref{C} imply
\begin{align*}
\|S_{k'-1} (M)- S_{k' -1} (\widetilde{M})\|_2 & = \|S_{k' -1} (M -\widetilde{M})\|_2 \leq \sqrt{k'-1} \,
\|S_1 (M -\widetilde{M})\|_2 \\ & = \sqrt{k'-1} \,
\|S_1 (M)- S_1 (\widetilde{M})\|_2  < \sigma_{(k' -1)(m+n)} (S_{k'-1}(M))
\end{align*}
and
\[
\|S_{k'} (M)- S_{k'} (\widetilde{M})\|_2 \leq \sqrt{k'} \, \|S_1 (M)- S_1 (\widetilde{M})\|_2 < \sigma_{(k'+d)m} (S_{k'}(M)).
\]
Then, Weyl's perturbation theorem for singular values \cite{stewart-sun} implies that
$S_{k' -1}(\widetilde{M})$ and $S_{k'}(\widetilde{M})$ have full column and full row rank, respectively, and Lemma \ref{lemm.two-ranks}-(b) applied to $\widetilde{M} (\la)$ implies that $\widetilde{M} (\la)$ has full Sylvester rank.

The proof of part (b) is based on Lemma \ref{lemm.two-ranks}-(c) and is analogous to that of part (a). Thus, it is omitted.
\end{proof}

\begin{example} \label{ex1-cont} Let us illustrate Theorem \ref{thm.smoothfullSylvrank} for the pencil in Example \ref{ex1}. In this example,
$m=6$, $n=2$, $d=1$, $k'=3$, and $t=0$. Moreover, $S_3 (M)$ has size $24 \times 24$ and we saw in Example \ref{ex1} that $r_3 = \rank S_3 (M) = 24$. Thus, Lemma \ref{lemm.two-ranks}-(c) guarantees that $M(\la)$ has full-Sylvester-rank and we can apply Theorem \ref{thm.smoothfullSylvrank}-(b) to determine that the neighbourhood of full-Sylvester-rank-robustness is in this example
$$\| S_1 (M)- S_1 (\widetilde{M}) \|_2 <  \frac{\sigma_{(k'+d)m} (S_{k'}(M))}{\sqrt{k'}} = \frac{\sigma_{24}(S_{3} (M))}{\sqrt{3}} \approx 0.2569.$$
Since $\|S_1 (M) \|_2 = \sqrt{2}$, this is a rather large robustness neighborhood.
\end{example}

We finish this section with an improvement of a particular case of Theorem \ref{thm.smoothfullSylvrank}, which is motivated by the following discussion. The estimations in Theorem \ref{thm.smoothfullSylvrank} of the sizes of the neighbourhoods of full-Sylvester-rank-robustness are based on the smallest singular values of $S_{k'-1} (M)$ and $S_{k'} (M)$ (or only $S_{k'} (M)$ when $k'=1$ or $t=0$), which leads to easily computable estimations. However, what really determines the robustness of the full-Sylvester-rank of $M(\la)$ is the smallest of the norms of the perturbations $\Delta M(\la)$ that make  $S_{k'-1} (M + \Delta M)$ or $S_{k'} (M + \Delta M)$ rank deficient, as a consequence of Lemma \ref{lemm.two-ranks}. The point is that this is not measured by the smallest singular values of $S_{k'-1} (M)$ and $S_{k'} (M)$, because these singular values measure the distance of $S_{k'-1} (M)$ and $S_{k'} (M)$ to rank deficient matrices under unstructured perturbations that do not preserve the zero structure of the Sylvester matrices $S_{k'-1} (M + \Delta M)$ and $S_{k'} (M + \Delta M)$. This implies that the estimates of the sizes of the robustness neighborhoods obtained in Theorem \ref{thm.smoothfullSylvrank} are, in general, strictly smaller than the actual sizes of those neighborhoods, except in the case $k' = 1$, i.e., $md \leq n$, corresponding to very ``flat'' polynomial matrices, where the size of the robustness neighborhood is sharp because $S_1(M)$ has no special zero structure. This is stated in the next corollary.

\begin{corollary} \label{cor.sharprobust} Let $M(\la) \in \FF[\la]^{m \times (m+n)}_d$ be a polynomial matrix such that $m d \leq n$. Then the following statements hold:
\begin{itemize}
\item[\rm (a)] $M(\la)$ has full-Sylvester-rank if and only if $S_{1} (M(\la))$ has full row rank.

\item[\rm (b)] Every $\widetilde{M} (\la) \in \FF[\la]^{m \times (m+n)}_d$ such that
$
\|S_1 (M)- S_1 (\widetilde{M})\|_2 < \sigma_{(d+1)m} (S_{1}(M)) \,
$
has full-Sylvester-rank.

\item[\rm (c)] There exists a polynomial matrix $\widetilde{M} (\la) \in \FF[\la]^{m \times (m+n)}_d$ that has not full-Sylvester-rank and satisfies
$
\|S_1 (M)- S_1 (\widetilde{M})\|_2 = \sigma_{(d+1)m} (S_{1}(M)) \, .
$
\end{itemize}
\end{corollary}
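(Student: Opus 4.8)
The plan is to reduce all three parts to results already in hand, the only classical input being that for a matrix $A\in\FF^{p\times q}$ of full row rank $p$ the spectral-norm distance from $A$ to the nearest matrix without full row rank equals $\sigma_p(A)$ and is attained. The structural remark that glues everything together is that the hypothesis $md\le n$, together with $m,d>0$ (so $0<md/n\le 1$), forces $k'=\lceil md/n\rceil=1$ in the notation of \eqref{eq.ktprime}; hence $S_{k'}(P)=S_1(P)$ for every $P(\la)\in\FF[\la]^{m\times(m+n)}_d$, and, since $S_1(P)$ is just the vertical stacking of the coefficient matrices $P_0,\dots,P_d$, the map $P(\la)\mapsto S_1(P)$ is a linear bijection of $\FF[\la]^{m\times(m+n)}_d$ onto $\FF^{(d+1)m\times(m+n)}$. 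In particular the norm adopted on $\FF[\la]^{m\times(m+n)}_d$ is literally the spectral norm transported through this bijection, and $S_1$ carries no forced zero pattern. Note also that $md\le n$ gives $(d+1)m\le m+n$, so $S_1(M)$ has $(d+1)m$ singular values, the smallest being $\sigma_{(d+1)m}(S_1(M))$.

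Parts (a) and (b) then follow at once. For (a): since $k'=1$ we have $S_{k'}(M)=S_1(M)$, and the claim is exactly Lemma \ref{lemm.two-ranks}-(c). For (b): again $k'=1$ gives $\sqrt{k'}=1$ and $S_{k'}(M)=S_1(M)$, so the bound $\sigma_{(k'+d)m}(S_{k'}(M))/\sqrt{k'}$ in Theorem \ref{thm.smoothfullSylvrank}-(b) becomes $\sigma_{(d+1)m}(S_1(M))$, and (b) is that theorem read off in this special case.

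For (c), which carries the genuine content, I would proceed as follows. Take a singular value decomposition $S_1(M)=U\Sigma V^{*}$, let $u$ and $v$ be left and right singular vectors of $S_1(M)$ associated with $\sigma_{(d+1)m}(S_1(M))$, and set $\widetilde B := S_1(M)-\sigma_{(d+1)m}(S_1(M))\,uv^{*}$. Then $\widetilde B$ has rank at most $(d+1)m-1$, so it does not have full row rank, while $\|S_1(M)-\widetilde B\|_2=\sigma_{(d+1)m}(S_1(M))$. Let $\widetilde M(\la)\in\FF[\la]^{m\times(m+n)}_d$ be the unique polynomial matrix with $S_1(\widetilde M)=\widetilde B$ (split $\widetilde B$ into its $d+1$ consecutive blocks of $m$ rows and use them as the matrix coefficients). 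Since $S_1(\widetilde M)=\widetilde B$ is not of full row rank, part (a) applied to $\widetilde M(\la)$ — which lies in the same space, so $md\le n$ still holds — shows $\widetilde M(\la)$ does not have full-Sylvester-rank, while $\|S_1(M)-S_1(\widetilde M)\|_2=\sigma_{(d+1)m}(S_1(M))$ by construction. The degenerate case $\sigma_{(d+1)m}(S_1(M))=0$ is covered verbatim (the construction returns $\widetilde M=M$, which already fails to be full-Sylvester-rank by (a)). There is no real obstacle here: as the discussion preceding the corollary explains, the point is conceptual — for $k\ge 2$ the Sylvester matrix $S_k$ has a rigid zero pattern and the sharp unstructured distance-to-singularity estimate does not transfer, whereas for $k'=1$ perturbing $S_1(M)$ is the same as perturbing $M(\la)$, so it transfers verbatim.
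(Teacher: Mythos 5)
Your proof is correct and takes essentially the same route as the paper's: parts (a) and (b) are read off from Lemma \ref{lemm.two-ranks}-(c) and Theorem \ref{thm.smoothfullSylvrank}-(b) after observing that $md\le n$ forces $k'=1$, and part (c) uses exactly the paper's construction, namely zeroing out the smallest singular value of $S_{1}(M)$ in its SVD and identifying the resulting rank-deficient matrix with $S_{1}(\widetilde M)$ via the block splitting. Your explicit remarks that $P(\la)\mapsto S_1(P)$ is a bijection carrying no zero pattern and that the degenerate case $\sigma_{(d+1)m}(S_1(M))=0$ is harmless are welcome elaborations of points the paper leaves implicit.
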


\begin{proof} Parts (a) and (b) follows immediately from the fact that $k' =1$ in \eqref{eq.ktprime}, from Lemma \ref{lemm.two-ranks} and Theorem \ref{thm.smoothfullSylvrank}-(b). The matrix polynomial in part (c) is extracted from any rank deficient matrix $A \in \FF^{(d+1)m \times (m+n)}$ such that $\|S_1 (M)- A\|_2 = \sigma_{(d+1)m} (S_{1}(M))$ with the identification $A=S_1 (\widetilde{M})$. One such $A$ is obtained from the singular value decomposition of $S_{1}(M)$ by replacing the smallest singular value by zero.
\end{proof}

\section{Perturbations of minimal bases dual to full-Sylvester-rank matrices} \label{sec.dualminbases} Some applications require to know how the minimal bases dual to a given minimal basis $M(\la)$ change when $M(\la)$ is perturbed. See Section \ref{sec.applications} for more information about such applications. In fact, this problem has been solved in \cite[Section 6.2]{blockKron} for certain particular minimal bases that are the off-diagonal blocks of block Kronecker linearizations of polynomial matrices \cite{blockKron} and has been applied to the backward error analysis of polynomial eigenvalue problems solved via block Kronecker linearizations. The minimal bases considered in \cite[Section 6.2]{blockKron} are very particular instances of full-Sylvester-rank polynomial matrices that have all their row degrees equal to $1$ and all the row degrees of their dual minimal bases equal. The purpose of this section is to solve the corresponding perturbation problem for any polynomial matrix $M(\la)$ that has full-Sylvester-rank. Therefore, the degrees of its dual minimal bases are not always all equal, although they are completely fixed according to \eqref{eq.2statfullSylrank} by the degree and size of $M(\la)$, and so remain constant in a robustness neighborhood of the full-Sylvester-rank property. We emphasize that only for the case of full-Sylvester-rank polynomial matrices such perturbation theory is possible, since in other cases tiny perturbations of $M(\la)$ may change dramatically the row degrees of the minimal bases of its dual space, even in the case that the perturbation of $M(\la)$ is still a minimal basis.

The main result in this section is Theorem \ref{thm.perturbdualbasis}, which deserves some comments before being stated. A key observation on our problem is that there are infinitely many minimal bases $N(\la)$ dual to a given full-Sylvester-rank polynomial matrix $M(\la)$ and a key feature of the solution we propose is that Theorem \ref{thm.perturbdualbasis} provides the same perturbation bound for a {\em relative} change of all of them. However, the allowed norms of the perturbations of $M(\la)$ do depend on the considered dual basis $N(\la)$ in order to guarantee that the perturbed $\widetilde{N} (\la)$ found in the proof of Theorem \ref{thm.perturbdualbasis} is indeed a minimal basis. In Theorem \ref{thm.perturbdualbasis}, the Frobenius norm \cite{stewart-sun} $\|S_k (P)\|_F$ of the Sylvester matrices of a polynomial matrix $P(\la)$ is used, in addition to the spectral or 2-norm used in Section \ref{sec.smoothness}. The reason of using the Frobenius norm is that the equality $\|S_k (P)\|_F =\|S_k (P^T)\|_F$ holds, while it is not valid in general for the spectral norm, and this equality makes the proof simpler and the bounds cleaner.

\begin{theorem} \label{thm.perturbdualbasis} Let $M(\la) \in \FF[\la]^{m \times (m+n)}_d$ be a polynomial matrix with full-Sylvester-rank, let $k'$ and $t$ be defined as in \eqref{eq.ktprime}, and let $N(\la) \in \FF[\la]^{n \times (m+n)}_{k'}$ be a minimal basis dual to $M(\la)$ with highest-row-degree coefficient matrix $N_{hr}\in \FF^{n \times (m+n)}$. Moreover, let us define the quantities $\theta_1 (M)$ and $\theta_2 (M)$ as follows:
\begin{itemize}
\item[\rm (a)] If $k' >1$ and $t>0$
\begin{align*}
\theta_1 (M) & := \min \left\{
\frac{\sigma_{(k'-1)(m+n)} (S_{k' -1}(M))}{\sqrt{k'-1}} \, , \,
\frac{\sigma_{(k'+d)m} (S_{k'}(M))}{\sqrt{k'}} \, , \,
\frac{\sigma_{(k'+1+d)m} (S_{k'+1}(M))}{\sqrt{k'+1}}
\right\} \, , \\
\theta_2 (M) & := \min \left\{
\frac{\sigma_{(k'+d)m} (S_{k'}(M))}{\sqrt{k'}} \, , \,
\frac{\sigma_{(k'+1+d)m} (S_{k'+1}(M))}{\sqrt{k'+1}}
\right\} \, ;
\end{align*}

\item[\rm (b)] If $k' = 1$ and $t >0$,
\begin{align*}
\theta_1 (M) & = \theta_2 (M) := \min \left\{
\sigma_{(d+1)m} (S_{1}(M)) \, , \,
\frac{\sigma_{(d+2)m} (S_{2}(M))}{\sqrt{2}}
\right\} \, ;
\end{align*}

\item[\rm (c)] If $t =0$
\begin{align*}
\theta_1 (M) & := \min \left\{
\frac{\sigma_{(k'+d)m} (S_{k'}(M))}{\sqrt{k'}} \, , \,
\frac{\sigma_{(k'+1+d)m} (S_{k'+1}(M))}{\sqrt{k'+1}}
\right\} , \\
\theta_2 (M) & :=
\frac{\sigma_{(k'+1+d)m} (S_{k'+1}(M))}{\sqrt{k'+1}}
\, .
\end{align*}
\end{itemize}
Then, every $\widetilde{M} (\la) \in \FF[\la]^{m \times (m+n)}_d$ such that
\begin{equation} \label{eq.dualpert1}
\|S_1 (M)- S_1 (\widetilde{M})\|_2 < \frac{1}{2} \cdot \theta_1 (M) \cdot  \frac{\sigma_n (N_{hr})}{\|S_1 (N)\|_F} \,
\end{equation}
has full-Sylvester-rank and has a dual minimal basis $\widetilde{N} (\la) \in \FF[\la]^{n \times (m+n)}_{k'}$ that satisfies
\begin{equation} \label{eq.dualpert2}
\frac{\|S_1 (N) - S_1 (\widetilde{N})\|_F}{\|S_1 (N)\|_F} \leq \frac{2}{\theta_2 (M)} \cdot \|S_1 (M)- S_1 (\widetilde{M})\|_2 \, .
\end{equation}
In addition, if $t=0$, then all the row degrees of $\widetilde{N}(\la)$ and $N(\la)$ are equal to $k'$.
\end{theorem}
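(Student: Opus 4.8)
The plan is to exploit the fact that $M(\la)N(\la)^T = 0$ translates, via a suitable block of Sylvester matrices, into the statement that the columns of a certain matrix built from the coefficients of $N(\la)$ span (a large part of) the null space of a Sylvester matrix of $M(\la)$, so that perturbing $M(\la)$ perturbs that null space, and a compactness/least-squares argument then produces the perturbed dual basis $\widetilde N(\la)$ with a controlled change. Concretely, since $M(\la)$ has full-Sylvester-rank and $N(\la)$ is dual to it with all row degrees $\leq k'$ (by Theorem \ref{thm.propsfullSylvrank}-(a) the right minimal indices of $M$, hence the row degrees of $N$, are $t$ copies of $k'-1$ and $n-t$ copies of $k'$), the relation $M(\la)N(\la)^T=0$ is equivalent to $S_{k'+1}(M)\,\widehat N = 0$, where $\widehat N \in \FF^{(k'+1)(m+n)\times n}$ is the block-Toeplitz-like matrix formed from the coefficients $N_0^T,\ldots,N_{k'}^T$ (padded with one zero block). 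By Theorem \ref{thm.propsfullSylvrank}, $S_{k'+1}(M)$ has full row rank with right nullity exactly $n(k'+1) - (k'+1+d)m$, which one checks equals $n$; hence the columns of $\widehat N$ form a basis of $\ker S_{k'+1}(M)$. I would verify that $\widehat N$ has full column rank $n$ and, crucially, that its smallest singular value is controlled from below by $\sigma_n(N_{hr})$ — this is where the highest-row-degree coefficient matrix $N_{hr}$ enters, since the bottom block row of $\widehat N$ is essentially $N_{hr}^T$ (after accounting for which rows have degree $k'$ versus $k'-1$).

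Next I would construct $\widetilde N(\la)$. Given $\widetilde M(\la)$ satisfying \eqref{eq.dualpert1}, the size of the perturbation is below $\tfrac12\theta_1(M)$ times a quantity $\leq 1$, so in particular $\|S_1(M)-S_1(\widetilde M)\|_2 < \tfrac12\theta_1(M)$; by Lemma \ref{C}, Weyl's theorem, and Lemma \ref{lemm.two-ranks}, this already forces $\widetilde M(\la)$ to have full-Sylvester-rank (here $\theta_1$ includes the singular values $\sigma(S_{k'-1}),\sigma(S_{k'}),\sigma(S_{k'+1})$ needed to control ranks up to index $k'+1$). Therefore $\ker S_{k'+1}(\widetilde M)$ also has dimension $n$, and I pick an orthonormal basis $\widetilde V$ of it that is closest to the orthonormal basis $V$ of $\ker S_{k'+1}(M)$ obtained from $\widehat N$; by the standard $\sin\Theta$ / perturbation-of-invariant-subspaces bound (Davis–Kahan type, or directly from Weyl applied to $S_{k'+1}(M)^T$), $\|V - \widetilde V\|$ is bounded by $\|S_{k'+1}(M)-S_{k'+1}(\widetilde M)\|_2 / \theta_2(M)$-type quantities, using that the relevant gap is $\sigma_{(k'+1+d)m}(S_{k'+1}(M))$. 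I then set $\widetilde N(\la)$ to be the polynomial matrix whose coefficient stack is $\widetilde V\,R$, where $R\in\FF^{n\times n}$ is chosen so that the leading block matches up with $N_{hr}$; equivalently $\widetilde N = N + \Delta N$ with $\Delta N$ read off from $\widetilde V - V$ in the appropriate basis. The condition \eqref{eq.dualpert1}, with its factor $\sigma_n(N_{hr})/\|S_1(N)\|_F$, is exactly what is needed to guarantee $\sigma_n(\widetilde N_{hr}) > 0$, i.e. $\widetilde N_{hr}$ still has full row rank, so that $\widetilde N(\la)$ is row reduced; combined with $S_{k'+1}(\widetilde M)\widetilde{\widehat N}=0$ and a nullity count (the row degrees of $\widetilde N$ are forced to be the same as those of $N$ since $\widetilde M$ has the same right minimal indices as $M$ by Theorem \ref{thm.propsfullSylvrank}-(a)), Theorem \ref{th:finitenumbranks} or Corollary \ref{cor2:Mdfull} gives that $\widetilde N(\la)^T$ is a minimal basis, hence dual to $\widetilde M(\la)$.

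Finally the bound \eqref{eq.dualpert2}: having $\Delta N$ expressed through $\widetilde V - V$, I convert the subspace-perturbation estimate into the Frobenius-norm relative bound by using the equalities $\|S_k(P)\|_F = \|S_k(P^T)\|_F$ (the stated reason for working with the Frobenius norm) and Lemma \ref{C} with the Frobenius norm in place of the 2-norm, chaining $\|S_1(N)-S_1(\widetilde N)\|_F$ through $\|S_{k'+1}(N) - S_{k'+1}(\widetilde N)\|_F$ and then through $\|S_{k'+1}(M) - S_{k'+1}(\widetilde M)\|_2 \leq \sqrt{k'+1}\,\|S_1(M)-S_1(\widetilde M)\|_2$; the factor $2$ and the denominator $\theta_2(M)$ come out of the $\sin\Theta$ bound (the gap is halved because the perturbation already shifts the singular values, a standard Weyl-type degradation). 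The statement about $t=0$ is immediate: then all row degrees of $N$ equal $k'$, so $N(\la)\in\FF[\la]^{n\times(m+n)}_{k'}$ with $N_{hr}$ of full row rank, and the same holds for $\widetilde N(\la)$ by construction.

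I expect the main obstacle to be the bookkeeping in the $t>0$ case, where $N(\la)$ has two distinct row degrees: the matrix $\widehat N$ is then not a clean block-Toeplitz matrix with a single trailing block, and one must be careful that the embedding of $\ker S_{k'}(M)$-vectors (degree $k'-1$) into $\ker S_{k'+1}(M)$ is accounted for without double-counting, and that the lower bound on $\sigma_n(\widehat N)$ in terms of $\sigma_n(N_{hr})$ still holds when $N_{hr}$ has some zero rows (the rows of degree $< k'$). Getting the three-way $\min$ in $\theta_1(M)$ to be exactly what the argument needs — rank control of $S_{k'-1},S_{k'},S_{k'+1}$ for $\widetilde M$ and gap control of $S_{k'+1}$ — is the other delicate point, but it is essentially bookkeeping once the subspace-perturbation framework is set up.
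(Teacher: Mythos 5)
Your overall strategy (translate duality into a kernel condition on a Sylvester matrix, perturb the kernel, recover $\widetilde N$) is in the right spirit, but it contains a concrete error that is fatal precisely in the case you defer as ``bookkeeping.'' The right nullity of $S_{k'+1}(M)$ is $(k'+1)(m+n)-(k'+1+d)m=(k'+1)n-dm=n+t$, not $n$ (your formula for the nullity drops the $m$ in the column count $(m+n)$, and your claimed evaluation to $n$ is only correct when $t=0$). Hence for $t>0$ the $n$ columns of your coefficient stack $\widehat N$ do \emph{not} form a basis of $\ker S_{k'+1}(M)$: each of the $t$ rows of $N(\la)$ of degree $k'-1$ contributes \emph{two} independent kernel vectors of $S_{k'+1}(M)$ (the unshifted and the once-shifted copy). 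A Davis--Kahan/$\sin\Theta$ comparison of $\ker S_{k'+1}(M)$ with $\ker S_{k'+1}(\widetilde M)$ therefore tracks an $(n+t)$-dimensional subspace, and the orthonormal basis $\widetilde V$ of the perturbed kernel closest to $V$ will in general not have the block-zero pattern required for the corresponding polynomial matrix to have $t$ rows of degree $k'-1$ and $n-t$ of degree $k'$; postmultiplying by an unspecified $R\in\FF^{n\times n}$ cannot repair this, since the obstruction lives in the $(n+t)$-dimensional kernel, not in a change of basis of an $n$-dimensional one. This is not bookkeeping; it is the point at which the argument must be redesigned.

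The paper avoids subspace-perturbation machinery altogether. It splits $N(\la)$ into its degree-$(k'-1)$ rows $X(\la)$ and degree-$k'$ rows $Y(\la)$, writes $\widetilde N = N + [\Delta X;\Delta Y]$, and observes that $\widetilde M(\la)\widetilde N(\la)^T=0$ is equivalent to the two \emph{consistent underdetermined linear systems} $S_{k'}(\widetilde M)\,S_1(\Delta X^T)=-S_{k'}(\Delta M)\,S_1(X^T)$ and $S_{k'+1}(\widetilde M)\,S_1(\Delta Y^T)=-S_{k'+1}(\Delta M)\,S_1(Y^T)$, consistency holding because $S_{k'}(\widetilde M)$ and $S_{k'+1}(\widetilde M)$ have full row rank (this is exactly why $\theta_1$ and $\theta_2$ involve the singular values of $S_{k'-1}$, $S_{k'}$ \emph{and} $S_{k'+1}$). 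Taking the minimum-Frobenius-norm solutions via the Moore--Penrose pseudoinverse automatically preserves the degree structure and yields \eqref{eq.dualpert2}, the factor $2$ coming from Weyl's theorem ($\sigma-\sqrt{k}\,\|S_1(\Delta M)\|_2\ge \sigma/2$ under \eqref{eq.dualpert1}). Minimality of $\widetilde N$ is then checked as you suggest, by showing that $N_{hr}+[\Delta X_{k'-1};\Delta Y_{k'}]$ still has full row rank, which is where $\sigma_n(N_{hr})$ in \eqref{eq.dualpert1} is used. To salvage your route you would at minimum have to treat $S_{k'}$ and $S_{k'+1}$ separately (or restrict to a structured $n$-dimensional subspace of $\ker S_{k'+1}$), at which point you essentially arrive at the paper's argument.
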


\begin{proof} We only prove the most difficult case, which is (a), i.e., when $k' > 1$ and $t >0$. The proof of the case (b) is essentially the same as that of (a),
with the differences coming from the differences between parts (a) and (b) of Theorem \ref{thm.smoothfullSylvrank}. The proof of (c) is considerably simpler, as it is briefly explained at the end of this proof. In order to perform the proof of case (a), note first that $N_{hr}$ is a submatrix of $S_1 (N)$. Thus, we get
\begin{equation} \label{eq.sigmahr}
\frac{\sigma_n (N_{hr})}{\|S_1 (N)\|_F}  \leq
\frac{\sigma_n (N_{hr})}{\|S_1 (N)\|_2} \leq \frac{\sigma_n (N_{hr})}{\|N_{hr}\|_2} \leq 1 \, .
\end{equation}
This inequality and \eqref{eq.dualpert1} imply that $\|S_1 (M)- S_1 (\widetilde{M})\|_2$ is bounded as in Theorem \ref{thm.smoothfullSylvrank}-(a). Therefore, any $\widetilde{M} (\la)$ that satisfies \eqref{eq.dualpert1} has full-Sylvester-rank and every of its dual minimal bases $\widetilde{N}(\la)$ has $t$ row degrees equal to $k'-1$, and $n-t$ row degrees equal to $k'$, as it also happens for the minimal basis $N(\la)$ dual to $M(\la)$ (recall Theorem \ref{thm.2necsuffullSylvRank}). As a consequence we can write, modulo row permutations,
\begin{equation} \label{eq.dualpartdegree}
N(\la) = \left[ \begin{array}{c} X(\la) \\ Y(\la) \end{array} \right]
\quad \mbox{and} \quad
\widetilde{N}(\la) = \left[ \begin{array}{c} X(\la) \\ Y(\la) \end{array} \right]  + \left[ \begin{array}{c} \Delta X(\la) \\ \Delta Y(\la) \end{array} \right],
\end{equation}
where $X(\la), X(\la)+ \Delta X(\la) \in \FF[\la]_{k'-1}^{t \times (m+n)}$ have both all their row degrees equal to $k'-1$, and $Y(\la), Y(\la)+ \Delta Y(\la) \in \FF[\la]_{k'}^{(n-t) \times (m+n)}$ have both all their row degrees equal to $k'$. In the rest of the proof, among the infinitely many minimal bases $\widetilde{N} (\la)$ dual to $\widetilde{M} (\la)$, we will determine one that satisfies \eqref{eq.dualpert2} by determining the corresponding $\Delta X(\la)$ and $\Delta Y(\la)$. To this purpose, note that the equation $\widetilde{M} (\la) \, \widetilde{N} (\la)^T = 0$ defining dual minimal bases is equivalent with the notation in \eqref{eq.dualpartdegree} to the pair of equations
\begin{equation} \label{eq.2dualpartdegree}
\widetilde{M} (\la) \, (X(\la)^T+ \Delta X(\la)^T) = 0 \quad \mbox{and} \quad
\widetilde{M} (\la) \, (Y(\la)^T+ \Delta Y(\la)^T) = 0 \, .
\end{equation}
By introducing the notation $\widetilde{M} (\la) = M (\la) + \Delta M (\la)$ and by using $$M(\la) N(\la)^T = [ M(\la) X(\la)^T  \; M(\la) Y(\la)^T ] = 0,$$ the equations in \eqref{eq.2dualpartdegree} can be written in the following equivalent form
\[
\widetilde{M} (\la) \, \Delta X(\la)^T = - \Delta M (\la) \,  X(\la)^T\quad \mbox{and} \quad
\widetilde{M} (\la) \, \Delta Y(\la)^T = - \Delta M (\la) \, Y(\la)^T \, ,
\]
which in terms of the corresponding Sylvester matrices are equivalent to the equations
\begin{align} \label{eq.3dualpartdegree}
S_{k'} (\widetilde{M}) \, S_1 (\Delta X^T) & = - S_{k'}(\Delta M) \,  S_1(X^T)\quad \mbox{and} \\ \label{eq.4dualpartdegree}
S_{k' +1} (\widetilde{M}) \, S_1 (\Delta Y^T) & = - S_{k'+1} (\Delta M) \, S_1(Y^T) \,
\end{align}
for the unknowns $S_1 (\Delta X^T)$ and $S_1 (\Delta Y^T)$. Since $\widetilde{M}(\la)$ has full-Sylvester-rank, the Sylvester matrices $S_{k'} (\widetilde{M})$ and $S_{k' +1} (\widetilde{M})$ have both full row rank and the equations \eqref{eq.3dualpartdegree} and \eqref{eq.4dualpartdegree} are consistent. In addition, for each of them, its minimum Frobenius norm solution is given by
\begin{align} \label{eq.5dualpartdegree}
 S_1 (\Delta X^T) & = - S_{k'} (\widetilde{M})^\dagger \,S_{k'}(\Delta M) \,  S_1(X^T)\quad \mbox{and} \\ \label{eq.6dualpartdegree}
 S_1 (\Delta Y^T) & = - S_{k' +1} (\widetilde{M})^\dagger \, S_{k'+1} (\Delta M) \, S_1(Y^T) \, ,
\end{align}
where $(\cdot)^\dagger$ stands for the Moore-Penrose pseudoinverse of a matrix. From \eqref{eq.5dualpartdegree}, Lemma \ref{C}, Weyl's perturbation theorem for singular values, and \eqref{eq.dualpert1}, we get
\begin{align} \nonumber
\|S_1 (\Delta X^T) \|_F & \leq \frac{\|S_{k'}(\Delta M) \|_2}{\sigma_{(k'+d)m} (S_{k'} (\widetilde{M}))} \, \|S_1(X^T)\|_F \leq
\frac{\sqrt{k'} \, \|S_{1}(\Delta M) \|_2}{\sigma_{(k'+d)m} (S_{k'} (M))- \|S_{k'} (\Delta M) \|_2} \, \|S_1(X^T)\|_F \\ \nonumber
& \leq
\frac{\sqrt{k'} \, \|S_{1}(\Delta M) \|_2}{\sigma_{(k'+d)m} (S_{k'} (M))- \sqrt{k'} \, \|S_{1} (\Delta M) \|_2} \, \|S_1(X^T)\|_F \leq
\frac{\sqrt{k'} \, \|S_{1}(\Delta M) \|_2}{\frac{1}{2} \, \sigma_{(k'+d)m} (S_{k'} (M))} \, \|S_1(X^T)\|_F \\ \label{eq.7dualpartdegree}
& = \frac{2 \, \|S_{1}(\Delta M) \|_2}{\frac{1}{\sqrt{k'}} \, \sigma_{(k'+d)m} (S_{k'} (M))} \, \|S_1(X^T)\|_F \, .
\end{align}
Analogously, we get from
\eqref{eq.6dualpartdegree}
\begin{equation} \label{eq.8dualpartdegree}
\|S_1 (\Delta Y^T) \|_F \, \leq \, \frac{2 \, \|S_{1}(\Delta M) \|_2}{\frac{1}{\sqrt{k'+1}} \, \sigma_{(k'+1+d)m} (S_{k'+1} (M))} \, \|S_1(Y^T)\|_F \, .
\end{equation}
Observe that if $\Delta X (\la) = \sum_{i=0}^{k'-1} \Delta X_i \, \la^i$ and
$\Delta Y (\la) = \sum_{i=0}^{k'} \Delta Y_i \, \la^i$, then
\begin{align*}
\|S_1(N^T) - S_1 (\widetilde{N}^T)\|_F & = \|S_1([\Delta X(\la)^T \; \Delta Y(\la)^T ] )\|_F = \|
\left[
\begin{array}{cc}
\Delta X_0^T & \Delta Y_0^T \\
\vdots & \vdots \\
\Delta X_{k'-1}^T & \Delta Y_{k'-1}^T \\
0  & \Delta Y_{k'}^T \\
\end{array}
\right]
\|_F \\
& = \sqrt{\|S_1 (\Delta X^T) \|_F^2 + \|S_1 (\Delta Y^T) \|_F^2} \, ,
\end{align*}
which combined with \eqref{eq.7dualpartdegree} and \eqref{eq.8dualpartdegree} yields
\[
\|S_1(N^T) - S_1 (\widetilde{N}^T)\|_F  \leq \frac{2 \, \|S_{1}(\Delta M) \|_2}{\theta_2 (M)} \, \sqrt{\|S_1 (X^T) \|_F^2 + \|S_1 (Y^T) \|_F^2}
=
\frac{2 \, \|S_{1}(\Delta M) \|_2}{\theta_2 (M)} \, \|S_1 (N^T) \|_F \, .
\]
Since $\|S_k (P)\|_F =\|S_k (P^T)\|_F$ for any polynomial matrix, this inequality is \eqref{eq.dualpert2}. It remains to prove that $\widetilde{N} (\la)$ is a minimal basis, since we have only proved so far that $\widetilde{M} (\la) \, \widetilde{N} (\la)^T =0$. In order to prove that $\widetilde{N} (\la)$ is a minimal basis, we will show that
\begin{equation} \label{eq.ZNhr}
Z := N_{hr} + \left[ \begin{array}{c} \Delta X_{k'-1} \\ \Delta Y_{k'} \end{array} \right]
\end{equation}
has full row rank, because in this case \eqref{eq.dualpartdegree} guarantees that $Z$ is the highest-row-degree coefficient matrix of $\widetilde{N}(\la)$, corresponding to $t$ row degrees equal to $k'-1$ and $n-t$ row degrees equal to $k'$, and that $\widetilde{N}(\la)$ is row reduced. Thus $\widetilde{N}(\la)$ has full row normal rank (recall Remark \ref{rem:unicity}) and since its row degrees are those of a minimal basis dual to $\widetilde{M} (\la)$ and it satisfies $\widetilde{M} (\la) \, \widetilde{N} (\la)^T =0$, we get that $\widetilde{N}(\la)$  must indeed be a minimal basis dual to $\widetilde{M} (\la)$. The fact that $Z$ in \eqref{eq.ZNhr} has full row rank follows from \eqref{eq.dualpert2} (which has been already proved) combined with the hypothesis \eqref{eq.dualpert1} which imply
\[
\| \left[ \begin{array}{c} \Delta X_{k'-1} \\ \Delta Y_{k'} \end{array} \right] \|_2 \leq \| \left[ \begin{array}{c} \Delta X_{k'-1} \\ \Delta Y_{k'} \end{array} \right] \|_F \leq \|S_1 (\widetilde{N}) - S_1 (N) \|_F < \frac{\theta_1 (M)}{\theta_2 (M)} \, \sigma_n (N_{hr})
\leq \sigma_n (N_{hr}).
\]
Taking into account that $N_{hr}$ has full row rank, this inequality implies that $Z$ has also full row rank. The proof of the case (a) is completed.

Observe that in the case $t=0$ considered in (c), the matrices $X(\la)$ and $\Delta X(\la)$ are not present in \eqref{eq.dualpartdegree}, i.e., the partition into two block rows corresponding to polynomial matrices of different degrees $k'-1$ and $k'$ is no longer needed, since all the row degrees of $N(\la)$ and $\widetilde{N} (\la)$ are equal to $k'$. The proof is considerably simpler and follows from the one above for the case (a) by ignoring all equations involving $X(\la)$ and $\Delta X (\la)$.
\end{proof}

It is interesting to emphasize with respect to Theorem \ref{thm.perturbdualbasis} that if $t>0$, then the minimal bases $N(\la)$ dual to full-Sylvester-rank matrices $M(\la) \in \FF[\la]_{d}^{m\times (m+n)}$ are never robust under arbitrary perturbations in $\FF[\la]_{k'}^{n\times (m+n)}$ since the row degrees of $N(\la)$ are not all equal (recall Theorem \ref{thm.smoothminbases}). However, Theorem \ref{thm.perturbdualbasis} proves that such $N(\la)$ can behave smoothly if the perturbations are restricted to be in the set of minimal bases dual to the full-Sylvester-rank matrices satisfying \eqref{eq.dualpert1}. In addition, if $t=0$ then we prove in Theorem \ref{thm.dualfullSylvrank} that any such $N(\la)$ has also full-Sylvester-rank and, therefore, $N(\la)$ admits a robustness result analogous to Theorem \ref{thm.smoothfullSylvrank}-(b) and a perturbation theorem similar to Theorem \ref{thm.perturbdualbasis}-(c) with respect to its dual minimal basis $M(\la)$.

\begin{theorem} \label{thm.dualfullSylvrank} Let $M(\la) \in \FF[\la]^{m \times (m+n)}_d$ be a polynomial matrix with full-Sylvester-rank, let $k'$ and $t$ be defined as in \eqref{eq.ktprime}, and let $N(\la) \in \FF[\la]^{n \times (m+n)}_{k'}$ be a minimal basis dual to $M(\la)$. Then, $N(\la)$ has full-Sylvester-rank if and only if $t=0$, i.e., if and only if all the row degrees on $N(\la)$ are equal.
\end{theorem}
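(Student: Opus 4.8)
The plan is to push all the information about $M(\la)$ through the duality between $M(\la)$ and $N(\la)$, and then to apply the characterizations of full-Sylvester-rank already proved — chiefly Theorem~\ref{thm.propsfullSylvrank}(b) and Theorem~\ref{thm.2necsuffullSylvRank} — but this time to $N(\la)$ rather than to $M(\la)$. The first step I would take is to set up the ``dictionary'' between the two bases. Since $M(\la)$ has full-Sylvester-rank, Theorem~\ref{thm.propsfullSylvrank} gives that $M(\la)$ is a minimal basis with $\rank M_d=m$ (so all its row degrees equal $d$) and that its right minimal indices are $\alpha_{k'-1}=t$, $\alpha_{k'}=n-t$, and $\alpha_j=0$ otherwise; moreover $k'\geq 1$ and $n-t\geq 1$ by \eqref{eq.ktprime}. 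Because Definition~\ref{dual-m-b} is symmetric in the two matrices up to transposition, $M(\la)^T$ is a minimal basis of $\mathcal{N}_r(N(\la))$, so the right minimal indices of $N(\la)$ are the row degrees of $M(\la)$: there are exactly $m$ of them (as $N(\la)$ has full row rank $n$) and all equal $d$. Dually, the row degrees of $N(\la)$ are the right minimal indices of $M(\la)$, namely $k'-1$ with multiplicity $t$ and $k'$ with multiplicity $n-t$; in particular $N(\la)$ has degree exactly $k'$, and its row degrees are all equal precisely when $t=0$ (this already yields the ``i.e.'' clause of the statement).

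For the forward implication I would argue as follows. If $N(\la)$ has full-Sylvester-rank, then Theorem~\ref{thm.propsfullSylvrank}(b) applied to $N(\la)\in\FF[\la]^{n\times(m+n)}_{k'}$ asserts that $N(\la)$ is a minimal basis all of whose row degrees equal its grade $k'$. Comparing with the dictionary, this is compatible only with $t=0$, since $t>0$ would force a row degree $k'-1\neq k'$. For the converse, assume $t=0$; then \eqref{eq.ktprime} gives $nk'=md$. Specializing the definition \eqref{eq.ktprime} to $N(\la)$ — i.e.\ swapping the roles $m\leftrightarrow n$ and replacing $d$ by $k'$ — the associated parameters are $\widehat{k}:=\lceil nk'/m\rceil=\lceil md/m\rceil=d$ and $\widehat{t}:=m\widehat{k}-nk'=0$. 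By the dictionary the right minimal indices of $N(\la)$ are its $m$ right minimal indices, all equal to $d=\widehat{k}$; hence, writing $\widehat{\alpha}_j$ for the number of right minimal indices of $N(\la)$ equal to $j$, we get $\widehat{\alpha}_{\widehat{k}-1}=0=\widehat{t}$, $\widehat{\alpha}_{\widehat{k}}=m=m-\widehat{t}$, and $\widehat{\alpha}_j=0$ otherwise. This is exactly the pattern \eqref{eq.2statfullSylrank} required by Theorem~\ref{thm.2necsuffullSylvRank} for the polynomial matrix $N(\la)$, so that theorem yields that $N(\la)$ has full-Sylvester-rank.

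I expect no genuine obstacle here: the argument is purely structural and invokes only results already proved, with no analytic estimates. The one point requiring care is the bookkeeping of the parameter swap $m\leftrightarrow n$, $d\to k'$ when applying \eqref{eq.ktprime} and Theorem~\ref{thm.2necsuffullSylvRank} to $N(\la)$, together with the observation that the condition $t=0$ is precisely what makes $md/n$ equal to the integer $k'$ and hence makes the ``hatted'' parameters of $N(\la)$ reduce to $\widehat k=d$, $\widehat t=0$.
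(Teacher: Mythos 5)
Your proof is correct and follows essentially the same route as the paper's: the forward implication is the contrapositive of the paper's use of Theorem~\ref{thm.propsfullSylvrank}(b) applied to $N(\la)$ (if $t>0$ the row degrees of $N(\la)$ take two distinct values, so $N(\la)$ cannot have full-Sylvester-rank), and the converse computes the parameters $\widehat{k}=d$, $\widehat{t}=0$ for $N(\la)$ and invokes Theorem~\ref{thm.2necsuffullSylvRank} exactly as the paper does. The parameter bookkeeping under the swap $m\leftrightarrow n$, $d\to k'$ is carried out correctly.
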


\begin{proof}
If $t >0$, then the row degrees of $N(\la)$ take the two values $k'-1$ and $k'$, as a consequence of Theorem \ref{thm.2necsuffullSylvRank}, and $N(\la)$ has not full-Sylvester-rank according to Theorem \ref{thm.propsfullSylvrank}-(b) (applied to $N(\la)$).

If $t=0$, then the quantities in \eqref{eq.ktprime} corresponding to $N(\la)$ are $k'_N = \lceil nk'/m\rceil = d$ and $t'_N = 0$. In addition, $N(\la)$ has exactly $m$ right minimal indices equal to $d$ because these are the row degrees of $M(\la)$. Therefore, Theorem \ref{thm.2necsuffullSylvRank} implies that $N(\la)$ has full-Sylvester-rank.
\end{proof}

We finish this section with another nice feature of the case $t=0$. In this case, also the reversal polynomials of $M(\la)$ and $N(\la)$ have full-Sylvester-rank and, therefore, admit robustness and perturbation results as those in Theorem \ref{thm.smoothfullSylvrank}-(b) and Theorem \ref{thm.perturbdualbasis}-(c).

\begin{theorem} \label{thm.revfullSylvrank} Let $M(\la) \in \FF[\la]^{m \times (m+n)}_d$ be a polynomial matrix with full-Sylvester-rank, let $k'$ and $t$ be defined as in \eqref{eq.ktprime}, let $N(\la) \in \FF[\la]^{n \times (m+n)}_{k'}$ be a minimal basis dual to $M(\la)$, and let
$$
\mbox{\rm rev}_d M(\la) := \la^d \, M\left(\frac{1}{\la} \right) \quad \mbox{and} \quad
\mbox{\rm rev}_{k'} N(\la) := \la^{k'} \, N\left(\frac{1}{\la} \right)
.$$
If $t=0$, then $\mbox{\rm rev}_d M(\la)$ and $\mbox{\rm rev}_{k'} N(\la)$ are dual minimal bases and both have full-Sylvester-rank.
\end{theorem}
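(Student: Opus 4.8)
The plan is to reduce the statement to one structural identity for Sylvester matrices together with the trivial behaviour of the reversal operation under products. I would first prove the following auxiliary claim: for any polynomial matrix $P(\la)$ of grade $d$ and size $p\times q$, and for every $k\ge 1$,
$$ S_k(\mbox{\rm rev}_d P)\;=\;(J_{k+d}\otimes I_p)\,S_k(P)\,(J_k\otimes I_q), $$
where $J_\ell$ denotes the $\ell\times\ell$ anti-identity matrix; equivalently, $S_k(\mbox{\rm rev}_d P)$ is obtained from $S_k(P)$ by reversing the order of its $k+d$ block rows and the order of its $k$ block columns. This is a short index check: writing $\mbox{\rm rev}_d P(\la)=\sum_i\widehat{P}_i\la^i$ with $\widehat{P}_i=P_{d-i}$, the $(r,c)$ block of $S_k(P)$ equals $P_{r-c}$ (zero unless $0\le r-c\le d$), and the double reversal $r\mapsto k+d+1-r$, $c\mapsto k+1-c$ sends it to position $(r',c')$ carrying the value $P_{d-(r'-c')}=\widehat{P}_{r'-c'}$, which is exactly the $(r',c')$ block of $S_k(\mbox{\rm rev}_d P)$; I would confirm this on a small instance such as $d=1$, $k=2$. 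Since $J_{k+d}\otimes I_p$ and $J_k\otimes I_q$ are permutation matrices, $\mathrm{rank}\,S_k(\mbox{\rm rev}_d P)=\mathrm{rank}\,S_k(P)$ for every $k$, so by Definition~\ref{def.fullsylrank} the matrix $\mbox{\rm rev}_d P$ has full-Sylvester-rank if and only if $P$ does.

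The rest is short. Since $M(\la)$ has full-Sylvester-rank, so does $\mbox{\rm rev}_d M(\la)\in\FF[\la]^{m\times(m+n)}_d$ by the auxiliary claim, and Theorem~\ref{thm.propsfullSylvrank}-(b) (or Corollary~\ref{cor2:Mdfull}) then gives that $\mbox{\rm rev}_d M(\la)$ is a minimal basis of full row rank $m$. Because $t=0$, Theorem~\ref{thm.dualfullSylvrank} shows that $N(\la)$ itself has full-Sylvester-rank; applying the auxiliary claim to $N(\la)$ at grade $k'$ (legitimate since all row degrees of $N(\la)$ equal $k'$ when $t=0$) shows $\mbox{\rm rev}_{k'}N(\la)\in\FF[\la]^{n\times(m+n)}_{k'}$ has full-Sylvester-rank and hence is a minimal basis of full row rank $n$. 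The orthogonality condition is immediate from $M(\la)N(\la)^{T}\equiv 0$:
$$ \mbox{\rm rev}_d M(\la)\,\bigl(\mbox{\rm rev}_{k'}N(\la)\bigr)^{T}\;=\;\la^{\,d+k'}\,M(1/\la)\,N(1/\la)^{T}\;=\;\la^{\,d+k'}\bigl(MN^{T}\bigr)(1/\la)\;=\;0 . $$
Thus $\mbox{\rm rev}_d M(\la)\in\FF[\la]^{m\times(m+n)}$ and $\mbox{\rm rev}_{k'}N(\la)\in\FF[\la]^{n\times(m+n)}$ are full-row-rank minimal bases with $m+n=(m+n)$ satisfying the product-zero condition, so Definition~\ref{dual-m-b} makes them dual minimal bases, and both have full-Sylvester-rank, as required.

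The only step needing genuine care is the index bookkeeping in the Sylvester block-reversal identity; everything else is a direct appeal to results already established in the excerpt. I would make explicit where the hypothesis $t=0$ is used, since it is essential on two counts: it lets Theorem~\ref{thm.dualfullSylvrank} guarantee that $N(\la)$ is full-Sylvester-rank, and it forces $N(\la)$ to have a single common row degree $k'$, so that $\mbox{\rm rev}_{k'}N(\la)$ is the natural partner of $\mbox{\rm rev}_d M(\la)$. When $t>0$ the row degrees of $N(\la)$ are not all equal, $N(\la)$ is not full-Sylvester-rank, and this symmetry fails — consistent with the theorem being stated only for $t=0$.
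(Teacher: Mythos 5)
Your proof is correct, and it takes a genuinely different route from the paper's. The paper's own proof is two lines: it invokes an external result (Theorem 2.7 of the block Kronecker reference) asserting that reversals of dual minimal bases with constant row degrees are again dual minimal bases with the same row degrees, and then reads off full-Sylvester-rank from Theorem \ref{thm.2necsuffullSylvRank}, since the right minimal indices of $\mbox{\rm rev}_d M(\la)$ are the row degrees of $\mbox{\rm rev}_{k'}N(\la)$ (all equal to $k'$) and vice versa. You instead prove everything from the Sylvester-matrix structure itself: your block-reversal identity $S_k(\mbox{\rm rev}_d P)=(J_{k+d}\otimes I_p)\,S_k(P)\,(J_k\otimes I_q)$ is correct (the index check $r'-c'=d-(r-c)$ is exactly right), it immediately gives rank-preservation and hence invariance of full-Sylvester-rank under reversal at the same grade, and then Theorem \ref{thm.propsfullSylvrank}-(b), Theorem \ref{thm.dualfullSylvrank}, and the direct substitution $\la^{d+k'}M(1/\la)N(1/\la)^T\equiv 0$ finish the argument. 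What your route buys is self-containedness --- no appeal to the external reference --- plus a reusable structural fact: full-Sylvester-rank is \emph{always} preserved by reversal at the same grade, even when $t>0$ (in which case the theorem still fails, but only because $\mbox{\rm rev}_{k'}N(\la)$ acquires zero rows at $\la=0$ and ceases to be a minimal basis, as you correctly isolate). What the paper's route buys is brevity and an explicit identification of the complete eigenstructure of the reversed matrices. Your accounting of where $t=0$ is used is also accurate on both counts.
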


\begin{proof}
Since $t=0$, all the row degrees of $N(\la)$ are equal to $k'$ (recall Theorem \ref{thm.2necsuffullSylvRank}). Then, \cite[Theorem 2.7]{blockKron} implies that $\mbox{\rm rev}_d M(\la)$ and $\mbox{\rm rev}_{k'} N(\la)$ are dual minimal bases with all their row degrees equal, respectively, to $d$ and $k'$. The fact that both have full-Sylvester-rank then follows from Theorem \ref{thm.2necsuffullSylvRank}, since the right minimal indices of $\mbox{\rm rev}_d M(\la)$ (resp. $\mbox{\rm rev}_{k'} N(\la)$) are the row degrees of $\mbox{\rm rev}_{k'} N(\la)$ (resp. $\mbox{\rm rev}_d M(\la)$).
\end{proof}

\section{The classical rank conditions for a minimal basis revisited} \label{sec.classrevisited} In this section we return to the classical characterization of a minimal basis given in Theorem \ref{minbasis_th}, originally presented in \cite{For75}, in the case of minimal bases over $\mathbb{C}$ that are robust under arbitrary perturbations with the same or smaller degree, which according to Theorem \ref{thm.smoothminbases} are those whose row degrees are all equal. These minimal bases include all polynomial matrices with full-Sylvester-rank and also other polynomial matrices, as it is explained in Remark \ref{rem.otherminbases}. More precisely, our goal in this section is to prove that in the case of robust minimal bases all of the infinitely many constant matrices whose ranks are involved in Theorem \ref{minbasis_th} have minimum singular values bounded below by a common number determined by one of the Sylvester matrices of the considered minimal basis, and, so, none of such matrices is extremely closed to be rank deficient. This is proved in the following theorem.

\begin{theorem} \label{thm.classrevisit} Let $M(\la) = M_0 + M_1 \la + \cdots + M_d \la^d \in \CC[\la]^{m \times (m+n)}$ be a minimal basis with $\rank M_d =m$, let $S_k$ be its Sylvester matrices for $k=1, 2, \ldots$, and let $d'$ be the largest right minimal index of $M(\la)$ or, equivalently, let $d'$ be the degree of any minimal basis dual to $M(\la)$. Then
\[
\sigma_{(d+d')m}(S_{d'}) \leq  \inf_{\la_0\in \CC} \sigma_m (M(\la_0)) \quad \mbox{\rm and} \quad \sigma_{(d+d')m}(S_{d'}) \leq \sigma_m (M_d) \,.
\]

\end{theorem}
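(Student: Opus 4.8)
The plan is to derive both inequalities from the standard variational description of the smallest singular value of a matrix with (weakly) more columns than rows: for $A\in\CC^{p\times q}$ with $p\le q$ one has $\sigma_p(A)=\min\{\,\|u^*A\|_2 : u\in\CC^p,\ \|u\|_2=1\,\}$, which is immediate from the SVD. In particular, since $M(\la_0),M_d\in\CC^{m\times(m+n)}$ and $m\le m+n$, the two right-hand sides are $\sigma_m(M(\la_0))=\min_{\|u\|_2=1}\|u^*M(\la_0)\|_2$ and $\sigma_m(M_d)=\min_{\|u\|_2=1}\|u^*M_d\|_2$. So it will suffice, for each unit vector $u\in\CC^m$ (and each $\la_0\in\CC$ in the first case), to exhibit a nonzero $y$ in the row space of $S_{d'}$ with $\|y^*S_{d'}\|_2/\|y\|_2$ no larger than $\|u^*M_d\|_2$ (resp.\ $\|u^*M(\la_0)\|_2$).

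Before that I would record two preliminary facts. Since $M(\la)$ is a minimal basis with $\rank M_d=m$, all its row degrees equal $d$, so by Theorem \ref{thm:dualbasis} its $n$ right minimal indices $\varepsilon_j$ satisfy $\sum_j\varepsilon_j=md$; hence $d'=\max_j\varepsilon_j\ge md/n\ge1$, so $S_{d'}$ is defined. Moreover, Corollary \ref{cor2:Mdfull} tells us that $d'$ is the smallest index $k$ for which $S_k=S_k(M)$ has full row rank $(d+d')m$; in particular $(d+d')m\le d'(m+n)$, so $S_{d'}$ has exactly $(d+d')m$ singular values, $\sigma_{(d+d')m}(S_{d'})$ is its smallest one, and it equals $\min\{\,\|y^*S_{d'}\|_2 : y\in\CC^{(d+d')m},\ \|y\|_2=1\,\}$.

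For the bound by $\sigma_m(M_d)$ the test vector is immediate: given a unit $u\in\CC^m$, let $y\in\CC^{(d+d')m}$ be the block vector whose last block of size $m$ equals $u$ and whose remaining blocks are zero. Reading off the block structure of $S_k$ from \eqref{eq:Sylvester}, only the last block column (the one containing $M_d$) contributes, so $y^*S_{d'}=[\,0\ \cdots\ 0\ \ u^*M_d\,]$, whence $\sigma_{(d+d')m}(S_{d'})\le\|u^*M_d\|_2$; minimizing over $u$ gives the second inequality. For the bound by $\inf_{\la_0}\sigma_m(M(\la_0))$ I would use a block-Vandermonde test vector: fix $\la_0\in\CC$ and a unit $u\in\CC^m$, and let $y\in\CC^{(d+d')m}$ have $r$-th block $\overline{\la_0}^{\,r-1}u$ for $r=1,\dots,d'+d$. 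Using that the $(r,j)$ block of $S_{d'}$ is $M_{r-j}$ when $0\le r-j\le d$ and zero otherwise, the $j$-th block of $y^*S_{d'}$ ($j=1,\dots,d'$) equals $\sum_{i=0}^{d}y_{i+j}^*M_i=\la_0^{\,j-1}u^*\bigl(\sum_{i=0}^{d}\la_0^{\,i}M_i\bigr)=\la_0^{\,j-1}u^*M(\la_0)$, so $y^*S_{d'}=[\,u^*M(\la_0)\ \ \la_0u^*M(\la_0)\ \cdots\ \la_0^{\,d'-1}u^*M(\la_0)\,]$. Therefore
\[
\sigma_{(d+d')m}(S_{d'})\ \le\ \frac{\|y^*S_{d'}\|_2}{\|y\|_2}\ =\ \left(\frac{\sum_{j=0}^{d'-1}|\la_0|^{2j}}{\sum_{j=0}^{d'+d-1}|\la_0|^{2j}}\right)^{1/2}\|u^*M(\la_0)\|_2\ \le\ \|u^*M(\la_0)\|_2 ,
\]
the last step using $d\ge1$. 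Minimizing over $u$ gives $\sigma_{(d+d')m}(S_{d'})\le\sigma_m(M(\la_0))$ for every $\la_0\in\CC$, and hence for the infimum.

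There is no serious obstacle here; the argument is essentially a one-idea computation. The only points requiring care are verifying the two identities for $y^*S_{d'}$ directly from the definition \eqref{eq:Sylvester} of the Sylvester matrix, and confirming $(d+d')m\le d'(m+n)$ so that $\sigma_{(d+d')m}(S_{d'})$ is genuinely the smallest singular value of $S_{d'}$ — this is precisely the step that uses the hypothesis that $M(\la)$ is a minimal basis with $\rank M_d=m$, via Corollary \ref{cor2:Mdfull}. The choice of the block-Vandermonde vector $y_r=\overline{\la_0}^{\,r-1}u$ is the one idea that makes the first estimate fall out cleanly.
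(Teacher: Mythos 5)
Your proposal is correct and follows essentially the same route as the paper: both inequalities come from the variational characterization $\sigma_{(d+d')m}(S_{d'})=\min_{\|y\|_2=1}\|y^*S_{d'}\|_2$, with the zero-padded last-block vector for the $M_d$ bound and the block-Vandermonde vector $y^*=[\,u^*\ \la_0 u^*\ \cdots\ \la_0^{d+d'-1}u^*\,]$ (the paper's $[\,1\ \la_0\ \cdots\ \la_0^{d+d'-1}\,]\otimes u_0^*$ applied to the identity \eqref{conv}) for the $M(\la_0)$ bound, yielding the identical ratio of geometric sums. The only cosmetic difference is that you minimize over all unit $u$ while the paper plugs in the left singular vector directly.
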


\begin{proof}
From Corollary \ref{cor2:Mdfull}, we obtain that $S_{d'}$ has full row rank. Therefore, its smallest singular value is larger than zero, i.e., $\sigma_{(d+d')m}(S_{d'}) >0$. We use in this proof the well known fact \cite{HoJo, stewart-sun} that any matrix $A\in \CC^{p \times q}$ with $p \leq q$ satisfies
\begin{equation} \label{eq.minsingvalgen}
\sigma_p (A) = \min_{0 \ne x \in \CC^{p}} \frac{\|A^* x \|_2}{\|x\|_2} =
\min_{0 \ne x \in \CC^{p}} \frac{\|x^* A \|_2}{\|x\|_2} \, .
\end{equation}
This result applied to $A = S_{d'}$ immediately implies that $\sigma_{(d+d')m}(S_{d'}) \leq \sigma_m (M_d)$, since the last block row of $S_{d'}$ is $[0 \; M_d]$ and one can choose in \eqref{eq.minsingvalgen} vectors $x$ with entries not corresponding to this last block row equal to zero. To prove the first inequality in Theorem \ref{thm.classrevisit} note that the polynomial matrix $M(\la)$ satisfies the following equality between polynomial matrices:
\begin{equation} \label{conv} \left[ \begin{array}{ccccc} I & I \la & I \la^2 & \ldots & I \la^{d+d'-1} \end{array}\right]
   \underbrace{\left[ \begin{array}{ccccc} M_0 \\ M_1 & M_0 \\
   \vdots & M_1 &  \ddots \\
   M_d & & \ddots & M_0 \\
   0 & M_d & & M_1\\
   \vdots & \ddots & \ddots & \vdots \\
   0 & \ldots & 0 & M_d
   \end{array}\right]}_{d' \; \mathrm{blocks}} =
   M(\la) \left[ \begin{array}{ccccc} I & I\la & I\la^2 & \ldots & I\la^{d'-1} \end{array}\right],
\end{equation}
where the identity matrices in the left hand side are equal to $I_m$ and in the right hand side are equal to $I_{m+n}$. Theorem \ref{minbasis_th} implies that $\sigma_{0} := \sigma_m (M(\la_0)) >0$ for any $\la_0 \in \CC$. Let $u_0\in \CC^m$ and $v_0\in \CC^{(m+n)}$ be left and right singular vectors of $M(\la_0)$ corresponding to $\sigma_0$, that is $\|u_0\|_2=\|v_0\|_2=1$ and  $u_0^* \, M(\la_0)=\sigma_0 \, v_0^*$. Then it follows from \eqref{conv} that
$$ (\left[ \begin{array}{ccccc} 1 & \la_0 & \la^2_0 & \ldots & \la^{d+d'-1}_0 \end{array}\right] \otimes u_0^*)
   \underbrace{\left[ \begin{array}{ccccc} M_0 \\ M_1 & M_0 \\
   \vdots & M_1 &  \ddots \\
   M_d & & \ddots & M_0 \\
   0 & M_d & & M_1\\
   \vdots & \ddots & \ddots & \vdots \\
   0 & \ldots & 0 & M_d
   \end{array}\right]}_{d' \; \mathrm{blocks}} =
   \sigma_0(\left[ \begin{array}{ccccc} 1 & \la_0 & \la^2_0 & \ldots & \la^{d'-1}_0 \end{array}\right] \otimes v_0^*).
$$
From this last equality and \eqref{eq.minsingvalgen} applied to $A=S_{d'}$, we get
$$
\sigma_{(d+d')m}(S_{d'}) \le \sigma_0
\sqrt{\frac{\sum_{i=1}^{d'} |\la_0|^{2(i-1)} }{\sum_{i=1}^{d+d'}|\la_0|^{2(i-1)}}}\le \sigma_0 = \sigma_m (M(\la_0)).
$$
Since this holds for all $\la_0 \in \CC$, the result is proved.
\end{proof}

Theorem \ref{thm.classrevisit} is consistent with the robustness result proved in Theorem \ref{thm.smoothminbases}-(b) that guarantees that all polynomial matrices in a neighborhood of a minimal basis $M(\la)$ with all its row degrees equal are also minimal bases, as long as we restrict to the space of polynomial matrices with degree less than or equal to $\deg (M)$. The point is that if $\inf_{\la_0\in \CC} \sigma_m (M(\la_0)) =0$ (note that one can think this might happen although $\sigma_m (M(\mu)) >0$ for each $\mu$), then it would be possible to construct an $\widetilde{M} (\la)$ as close as we want to $M(\la)$ with $\sigma_m (\widetilde{M} (\mu)) =0$ for some particular $\mu$ and such $\widetilde{M} (\la)$ would not be a minimal basis by Theorem \ref{minbasis_th}.

\section{Applications to backward error analyses} \label{sec.applications} In this section, we discuss  very briefly two concrete applications of some of the results in this paper and develop with detail a third application. The goals of this third application are to illustrate, first, how the results in this paper allow us to establish a framework for performing backward error analyses of polynomial eigenvalue problems solved by applying backward stable algorithms to strong linearizations or $\ell$-ifications, and, second, that considerable nontrivial work is still needed to analyze the behaviour of particular linearizations or $\ell$-ifications within this framework.

The first application is to extend the backward error analysis performed in \cite[Section 6]{blockKron} for polynomial eigenvalue problems solved via the block Kronecker linearizations introduced in \cite[Section 5]{blockKron} to the much more general class of strong block minimal bases (SBMB) linearizations of polynomial matrices introduced in \cite[Section 3]{blockKron}, whose construction is based on polynomial matrices with full-Sylvester-rank of degree $1$ whose right minimal indices are all equal. This is a challenging problem with very interesting applications, since these SBMB linearizations include, among many others, strong linearizations of polynomial matrices expressed in non-monomial bases  \cite{lawrence-perez-cheby,robol}, all Fiedler-like linearizations (modulo permutations) published so far in the literature \cite{buenoREU}, as well as some interesting vector spaces of potential strong linearizations of polynomial matrices \cite{fassbendersaltenberger}.

A second application is related to the fact that the SBMB and block Kronecker linearizations of polynomial matrices recently introduced in \cite{blockKron} can be extended to SBMB and block Kronecker {\em $\ell$-ifications} of polynomial matrices \cite{BMBelifications}, which are constructed by using polynomial matrices with full-Sylvester-rank of degree $\ell$ and whose right minimal indices are all equal. The reader is referred to \cite{DDM,DDV-l-ifications} for formal definitions of the concept of an $\ell$-ification of a polynomial matrix. In this paper it is enough to know that an $\ell$-ification of a polynomial matrix $P(\la)$ is another polynomial matrix of degree $\ell$ that has the same (finite and infinite) elementary divisors, the same number of left, and the same number of right minimal indices as $P(\la)$. The results in \cite{BMBelifications} pose naturally the question whether or not the backward error analysis for block Kronecker linearizations in \cite[Section 6]{blockKron} can be extended to the classes of $\ell$-ifications introduced in \cite{BMBelifications}. This seems to be a hard problem whose potential solution will be related for sure to the results in this paper.

The third application in this section is to perform a backward error analysis of polynomial eigenvalue problems solved by applying any backward stable algorithm to the strong $\ell$-fications introduced in \cite{DDV-l-ifications}, which include also many interesting strong linearizations when $\ell =1$. We develop this application with detail and for this purpose it is convenient to revise the most important results from \cite{DDV-l-ifications}. Theorem \ref{thm.stronglifications} was proved in \cite[Theorems 4.1 and 4.5]{DDV-l-ifications}. The statement is essentially the one in \cite{DDV-l-ifications} although the notation has been adapted to fit the one used in previous sections of this paper. Theorem \ref{thm.stronglifications} is valid in an arbitrary field, while the remaining results in this section are valid only when $\FF = \RR$ or $\FF = \CC$.
\begin{theorem} \label{thm.stronglifications} Let
\begin{equation} \label{eq.ilification}
L(\la) = \left[ \begin{array}{c} K(\la) \\ M(\la) \end{array} \right] \in \FF[\la]_\ell^{(p+m) \times (m+n)}
\end{equation}
be a matrix polynomial of degree $\ell$ such that $M(\la) \in \FF[\la]^{m\times (m+n)}$ is a minimal basis with all its row degrees equal to $\ell$ and such that the row degrees of any minimal basis dual to $M(\la)$ are all equal to $k' = m \ell/n$, i.e., all the right minimal indices of $M(\la)$ are equal. If $N(\la) \in \FF[\la]^{n\times (m+n)}_{k'}$ is any minimal basis dual to $M(\la)$, then $L(\la)$ is a strong $\ell$-ification of the polynomial matrix
\begin{equation} \label{eq.invellif}
P(\la) = K(\la) N(\la)^T \in \FF[\la]^{p \times n} \, ,
\end{equation}
considered as a polynomial of degree less than or equal to $\ell + k'$.
In addition, the right minimal indices of $L(\la)$ are obtained from the right minimal indices of $P(\la)$ by adding to each of them $k'$, while the left minimal indices of $L(\la)$ are equal to those of $P(\la)$.
\end{theorem}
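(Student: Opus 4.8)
The plan is to reduce $L(\la)$ by unimodular equivalence to a block matrix having $P(\la)$ in one block and an identity in the other, and then to read off the finite and infinite elementary divisors, the minimal indices, and the strong $\ell$-ification property from this reduction together with the Index Sum Theorem. First I would construct a \emph{unimodular completion} of the dual pair $(M,N)$: since $M(\la)$ is a minimal basis, $M(\la_0)$ has full row rank for every $\la_0\in\overline\FF$ by Theorem \ref{minbasis_th}, so $M(\la)$ has Smith form $[\,I_m\ \ 0\,]$; combining this with the standard fact that a minimal basis of $\mathcal{N}_r(M)$ generates the $\FF[\la]$-module of polynomial vectors in $\mathcal{N}_r(M)$, one obtains polynomial matrices $\widehat N(\la)\in\FF[\la]^{m\times(m+n)}$ and $\widehat M(\la)\in\FF[\la]^{n\times(m+n)}$ with
\[
\begin{bmatrix} M(\la)\\ \widehat M(\la)\end{bmatrix}\bigl[\,\widehat N(\la)^T\ \ N(\la)^T\,\bigr]=I_{m+n},
\]
the left factor being unimodular; equivalently $M\widehat N^T=I_m$, $MN^T=0$, $\widehat M\widehat N^T=0$, $\widehat M N^T=I_n$. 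Then
\[
\begin{bmatrix} I_p & -K(\la)\widehat N(\la)^T\\ 0 & I_m\end{bmatrix}L(\la)\bigl[\,\widehat N(\la)^T\ \ N(\la)^T\,\bigr]=\begin{bmatrix} 0 & P(\la)\\ I_m & 0\end{bmatrix},
\]
so $L(\la)$ is unimodularly equivalent, after an obvious permutation, to $I_m\oplus P(\la)$.

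From this reduction the finite elementary divisors of $L(\la)$ and $P(\la)$ coincide and $\rank L=m+\rank P$, hence $L$ and $P$ have the same number of left and of right minimal indices. For the infinite part I would run the same reduction on $\rev_\ell L(\la)=\begin{bmatrix}\rev_\ell K(\la)\\ \rev_\ell M(\la)\end{bmatrix}$: since all row degrees of $M(\la)$ equal $\ell$, \cite[Thm. 2.7]{blockKron} guarantees that $\rev_\ell M(\la)$ and $\rev_{k'}N(\la)$ are again dual minimal bases, with all row degrees $\ell$ and $k'$ respectively, so the argument above yields $\rev_\ell L(\la)$ unimodularly equivalent to $I_m\oplus\bigl(\rev_\ell K(\la)\bigr)\bigl(\rev_{k'}N(\la)\bigr)^T=I_m\oplus\rev_{\ell+k'}P(\la)$; equating the elementary divisors at $0$ then identifies the infinite elementary divisors of $L(\la)$ (grade $\ell$) and of $P(\la)$ (grade $\ell+k'$). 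Together with the finite part and the rank equality, this proves that $L(\la)$ is a strong $\ell$-ification of $P(\la)$ regarded with grade $\ell+k'$.

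For the right minimal indices, the reduction gives $\mathcal{N}_r(L)=N(\la)^T\,\mathcal{N}_r(P)$. If the columns of $Z(\la)$ form a minimal basis of $\mathcal{N}_r(P)$ with minimal indices $\varepsilon_1,\dots,\varepsilon_s$, I would verify with Theorem \ref{minbasis_th} that the columns of $N(\la)^T Z(\la)$ form a minimal basis of $\mathcal{N}_r(L)$: they span $\mathcal{N}_r(L)$ and have pointwise full column rank because $N(\la_0)^T$ has full column rank $n$ (Theorem \ref{minbasis_th} applied to $N$) and $Z(\la_0)$ has full column rank; and since all row degrees of $N(\la)$ equal $k'$, the coefficient of $\la^{k'+\varepsilon_j}$ in the $j$th column of $N(\la)^T Z(\la)$ is $N_{hr}^T$ times the $j$th column of $Z_{hc}$, so the highest-column-degree coefficient matrix is $N_{hr}^T Z_{hc}$, which has full column rank because $N_{hr}$ has full row rank and $Z_{hc}$ has full column rank. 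Hence the right minimal indices of $L(\la)$ are $\varepsilon_1+k',\dots,\varepsilon_s+k'$.

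The left minimal indices are the delicate point, because the transformations above are not degree preserving on left null vectors and none of the explicit bases they produce is a minimal basis of $\mathcal{N}_\ell(L)$. Here I would argue indirectly. Since the row space of $M(\la)$ equals $\mathcal{N}_\ell(N(\la)^T)$, a row vector $y_1(\la)^T$ occurs as the first block of some $[\,y_1^T\ y_2^T\,]\in\mathcal{N}_\ell(L)$ exactly when $y_1^T K$ lies in the row space of $M$, i.e. when $y_1^T P=y_1^T(KN^T)=0$, and then $y_2^T$ is the unique solution of $y_2^T M=-y_1^T K$; thus $(y_1^T,y_2^T)\mapsto y_1^T$ is an $\FF(\la)$-linear isomorphism $\mathcal{N}_\ell(L)\to\mathcal{N}_\ell(P)$. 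Pick a minimal basis $\{(y_1^{(i)T},y_2^{(i)T})\}$ of $\mathcal{N}_\ell(L)$ with left minimal indices $\gamma_1^L\le\cdots\le\gamma_q^L$; then $\{y_1^{(i)T}\}$ is a polynomial basis of $\mathcal{N}_\ell(P)$ with $\deg y_1^{(i)}\le\gamma_i^L$. Applying the Index Sum Theorem \cite[Thm. 6.5]{DDM} to $L(\la)$ (grade $\ell$) and to $P(\la)$ (grade $\ell+k'$) and subtracting, and using the equality of finite and infinite elementary divisors, $\rank L=m+\rank P$, the right-minimal-index formula just proved, and the relation $nk'=m\ell$ (which holds because the $n$ row degrees of $N(\la)$ all equal $k'$ and sum to $m\ell$ by Theorem \ref{thm:dualbasis}), one gets $\sum_i\gamma_i^L=\sum(\text{left minimal indices of }P)$. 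Hence the polynomial basis $\{y_1^{(i)T}\}$ of $\mathcal{N}_\ell(P)$ has minimal possible degree sum, so it is a minimal basis of $\mathcal{N}_\ell(P)$; then $\deg y_1^{(i)}\le\gamma_i^L$ termwise together with equal total sum forces $\deg y_1^{(i)}=\gamma_i^L$ for all $i$, i.e. the left minimal indices of $L(\la)$ coincide with those of $P(\la)$. The main obstacle is precisely this last chain: the actual values of the left minimal indices cannot be read off directly, and must be pinned down by the global degree bookkeeping of the Index Sum Theorem combined with the right-index result, with a little extra care in degenerate cases (for instance $p=0$, or trivial $\mathcal{N}_r(P)$ or $\mathcal{N}_\ell(P)$), where the same formulas hold with empty blocks.
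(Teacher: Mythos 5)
The paper does not actually prove this theorem: it imports it verbatim from \cite[Theorems 4.1 and 4.5]{DDV-l-ifications}, so there is no internal proof to compare against. Your argument is correct, and it is essentially the argument used in that reference and in \cite[Section 3]{blockKron}: the unimodular completion $\left[\begin{smallmatrix} M \\ \widehat M\end{smallmatrix}\right]\bigl[\,\widehat N^T\ N^T\,\bigr]=I_{m+n}$ reduces $L(\la)$ to $I_m\oplus P(\la)$, the same reduction applied to $\rev_\ell L$ (using \cite[Thm.~2.7]{blockKron} and the identity $(\rev_\ell K)(\rev_{k'}N)^T=\rev_{\ell+k'}P$) handles the infinite elementary divisors, the explicit basis $N(\la)^TZ(\la)$ gives the right minimal indices, and the left minimal indices are pinned down by the Index Sum Theorem together with $nk'=m\ell$. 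All the individual steps check out: the full-column-rank and column-reducedness verifications for $N^TZ$ are right, the isomorphism $\mathcal{N}_\ell(L)\to\mathcal{N}_\ell(P)$, $(y_1^T,y_2^T)\mapsto y_1^T$, is correct because the row space of $M$ equals $\mathcal{N}_\ell(N^T)$ by a dimension count and $M$ has full row rank, and the termwise-inequality-plus-equal-sums squeeze at the end is valid. The only point you pass over quickly is the existence of the unimodular completion whose inverse has $N^T$ as its last $n$ columns; this needs both that $M$ has Smith form $[I_m\ 0]$ and that the prescribed minimal basis $N^T$ differs from the corresponding block of any completion's inverse by a unimodular right factor (Forney's ``minimal bases generate the polynomial module'' property), which you do name, so the gap is only one of detail, not of substance.
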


Observe that since $k' = m \ell/n$ must be an integer only some combinations of $m$, $n$, and $\ell$ are allowed in Theorem \ref{thm.stronglifications}. Recall also that the value $k'$ of the row degrees of $N(\la)$ is fixed to be precisely $k' = m \ell/n$ by Theorem \ref{thm:dualbasis} and the requirements that all the row degrees of $M(\la)$ are equal to $\ell$ and that all the row degrees of $N(\la)$ are equal. Theorem \ref{thm.stronglifications} establishes a wide framework for constructing strong $\ell$-ifications of matrix polynomials. In particular, this framework includes the Frobenius-like strong $\ell$-ifications introduced for first time in \cite[Section 5.2]{DDM}. Moreover, in \cite[Section 4]{DDV-l-ifications}, it is explained how to determine $K(\la)$ from \eqref{eq.invellif} when $P(\la)$ and $N(\la)$ are given and some interesting examples of strong $\ell$-ifications are presented there.

It is interesting to emphasize that with the new results introduced in this paper {\em the polynomial matrix $M(\la)$ with the properties in Theorem \ref{thm.stronglifications} is just a matrix with full-Sylvester-rank with $t=0$, i.e., with its right minimal indices all equal to $k'$}, which follows immediately from Theorem \ref{thm.2necsuffullSylvRank} and the fact that $k'$ is an integer. This simple observation allows us to prove very easily the perturbation result for strong $\ell$-ifications presented in Theorem \ref{thm.pertstronglifications} by using Theorem \ref{thm.perturbdualbasis}-(c). Recall in this context that $S_1 (Q)$ denotes the first Sylvester matrix of the polynomial matrix $Q(\la)$ and that $\|S_1 (Q)\|_F$ and $\|S_1 (Q)\|_2$ denote the corresponding Frobenius and spectral norms, respectively.

\begin{theorem} \label{thm.pertstronglifications} Let
\[
L(\la) = \left[ \begin{array}{c} K(\la) \\ M(\la) \end{array} \right] \in \FF[\la]_\ell^{(p+m) \times (m+n)}
\]
be a strong $\ell$-ification of the polynomial matrix
$
P(\la) = K(\la) N(\la)^T \in \FF[\la]^{p \times n} \, ,
$
with the properties and notation stated in Theorem \ref{thm.stronglifications}. If
\[
\Delta L(\la) = \left[ \begin{array}{c} \Delta K(\la) \\ \Delta M(\la) \end{array} \right] \in \FF[\la]_\ell^{(p+m) \times (m+n)}
\]
is another polynomial matrix of degree at most $\ell$ partitioned as $L(\la)$ and such that
\begin{equation} \label{eq.1condl-ification}
\|S_1 (\Delta M) \|_2 < \frac{1}{2} \cdot \theta_1 (M) \cdot \frac{\sigma_n (N_{hr})}{\|S_1 (N)\|_F},
\end{equation}
where $\theta_1 (M)$ is defined as in Theorem \ref{thm.perturbdualbasis}-(c) with $d$ replaced by $\ell$ and $N_{hr}$ is the highest-row-degree coefficient matrix of $N(\la)$, then $L(\la)+ \Delta L(\la)$ is a strong $\ell$-ification of a polynomial matrix $P(\la) + \Delta P(\la)$ of degree less than or equal to $\ell + k'$ such that
\begin{equation} \label{eq.1finalbound}
\frac{\|S_1(\Delta P)\|_F}{\|S_1(P)\|_F} \leq \, \min\{\sqrt{k' +1} \, ,\, \sqrt{\ell +1} \} \, \, C_{P,L} \,\, \frac{\|S_1(\Delta L)\|_F}{\|S_1(L)\|_F},
\end{equation}
with
\begin{equation} \label{eq.2finalbound}
C_{P,L} = \frac{\|S_1(L)\|_F}{\|S_1(P)\|_F} \, \|S_1(N)\|_F \,
\left( 1
+ \frac{2 \, \sqrt{k' +1}}{\sigma_{(k' +1 +\ell)m} (S_{k' +1} (M))} \, (\|S_1(K)\|_F + \|S_1(\Delta K)\|_F)\, \right) \,.
\end{equation}
In addition, the right minimal indices of $L(\la) + \Delta L(\la)$ are obtained from the right minimal indices of $P(\la) + \Delta P(\la)$ by adding to each of them $k'$, while the left minimal indices of $L(\la) + \Delta L(\la)$ are equal to those of $P(\la) + \Delta P(\la)$.
\end{theorem}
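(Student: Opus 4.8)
The plan is to reduce everything to Theorems \ref{thm.stronglifications} and \ref{thm.perturbdualbasis}-(c) together with elementary manipulations of Sylvester matrices. The key preliminary observation, already made in the paragraph preceding the statement, is that a polynomial matrix $M(\la)$ with the properties required in Theorem \ref{thm.stronglifications} is exactly a full-Sylvester-rank matrix with $t=0$ (by Theorem \ref{thm.2necsuffullSylvRank}, using that $k'=m\ell/n$ is an integer), and that $k'$ and $t$ in \eqref{eq.ktprime} depend only on $m$, $n$, $\ell$, so the perturbed matrix $\widetilde M(\la):=M(\la)+\Delta M(\la)$ again has this same $k'$ and $t=0$.

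First I would apply Theorem \ref{thm.perturbdualbasis}-(c) with $d$ replaced by $\ell$: hypothesis \eqref{eq.1condl-ification} is precisely \eqref{eq.dualpert1}, so it yields that $\widetilde M(\la)$ has full-Sylvester-rank and that there is a minimal basis $\widetilde N(\la)\in\FF[\la]^{n\times(m+n)}_{k'}$ dual to $\widetilde M(\la)$, all of whose row degrees equal $k'$, with $\|S_1(\Delta N)\|_F\le\frac{2}{\theta_2(M)}\|S_1(\Delta M)\|_2\,\|S_1(N)\|_F$, where $\Delta N:=\widetilde N-N$ and, by the definition of $\theta_2(M)$ in part (c), $\frac{2}{\theta_2(M)}=\frac{2\sqrt{k'+1}}{\sigma_{(k'+1+\ell)m}(S_{k'+1}(M))}$. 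Since $\widetilde M(\la)$ has full-Sylvester-rank with $t=0$, Theorem \ref{thm.propsfullSylvrank}-(b) gives $\rank\widetilde M_\ell=m$ and Theorem \ref{thm.2necsuffullSylvRank} gives that all right minimal indices of $\widetilde M(\la)$ equal $k'$; hence $L(\la)+\Delta L(\la)=\left[\begin{array}{c}K+\Delta K\\ \widetilde M\end{array}\right]$ satisfies the hypotheses of Theorem \ref{thm.stronglifications} with the same $k'$. Applying that theorem with the specific dual basis $\widetilde N(\la)$ shows that $L(\la)+\Delta L(\la)$ is a strong $\ell$-ification of $\widetilde P(\la):=(K(\la)+\Delta K(\la))\,\widetilde N(\la)^T$, considered of grade $\ell+k'$, and that its right and left minimal indices are related to those of $\widetilde P(\la)$ as claimed. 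Setting $\Delta P:=\widetilde P-P$ then leaves only the norm bound \eqref{eq.1finalbound}--\eqref{eq.2finalbound} to prove.

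For the norm bound I would use the Sylvester-matrix-of-a-product identity, which follows at once from the convolution structure of \eqref{eq:Sylvester}: if $A(\la)B(\la)=C(\la)$ with $B$ of grade $s$ and $A$ of grade $a$, then $S_1(C)=S_{s+1}(A)\,S_1(B)$ with $C$ of grade $a+s$. This gives $S_1(P)=S_{k'+1}(K)\,S_1(N^T)$, and, expanding $\Delta P=(K+\Delta K)(N+\Delta N)^T-KN^T=\widetilde K\,\Delta N^T+\Delta K\,N^T$ with $\widetilde K:=K+\Delta K$,
\[ S_1(\Delta P)=S_{k'+1}(\widetilde K)\,S_1(\Delta N^T)+S_{k'+1}(\Delta K)\,S_1(N^T), \]
together with the transposed identity $S_1(\Delta P^T)=S_{\ell+1}(\Delta N)\,S_1(\widetilde K^T)+S_{\ell+1}(N)\,S_1(\Delta K^T)$. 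Now I would estimate the two summands with $\|AB\|_F\le\|A\|_2\|B\|_F$, using Lemma \ref{C} in the form $\|S_k(Q)\|_2\le\sqrt{k}\,\|S_1(Q)\|_2\le\sqrt{k}\,\|S_1(Q)\|_F$, the equality $\|S_k(Q)\|_F=\|S_k(Q^T)\|_F$, the elementary $\|S_1(K)\|_F^2+\|S_1(M)\|_F^2=\|S_1(L)\|_F^2$ (so $\|S_1(\Delta K)\|_F$ and $\|S_1(\Delta M)\|_2$ are $\le\|S_1(\Delta L)\|_F$), the triangle inequality $\|S_1(\widetilde K)\|_F\le\|S_1(K)\|_F+\|S_1(\Delta K)\|_F$, and the bound on $\|S_1(\Delta N)\|_F$ from the previous paragraph. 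The direct identity produces an outer factor $\sqrt{k'+1}$ and the transposed one an outer factor $\sqrt{\ell+1}$, in front of $\|S_1(N)\|_F\,\|S_1(\Delta L)\|_F\left(1+\frac{2\sqrt{k'+1}}{\sigma_{(k'+1+\ell)m}(S_{k'+1}(M))}(\|S_1(K)\|_F+\|S_1(\Delta K)\|_F)\right)$; taking the smaller gives the factor $\min\{\sqrt{k'+1},\sqrt{\ell+1}\}$, and then dividing by $\|S_1(P)\|_F$ and inserting the trivial factor $\|S_1(L)\|_F/\|S_1(L)\|_F$ rearranges the estimate into \eqref{eq.1finalbound} with $C_{P,L}$ as in \eqref{eq.2finalbound}. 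The minimal-index statement is already contained in the second paragraph, being the last assertion of Theorem \ref{thm.stronglifications} applied to $L+\Delta L$ and $\widetilde P$.

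I expect the main obstacle to be bookkeeping rather than conceptual: keeping the grades of all the Sylvester matrices consistent across the product identities (the blocks of $K,\Delta K$ at grade $\ell$, those of $N,\Delta N$ at grade $k'$, the products at grade $\ell+k'$), and choosing, for each of the two summands of $S_1(\Delta P)$, which norm ($\|\cdot\|_2$ versus $\|\cdot\|_F$) and which orientation (the direct or the transposed identity) to use so that the constant comes out exactly as stated, in particular so that the additive term $1$ inside $C_{P,L}$ carries no dependence on $K$ while the other term carries $\|S_1(K)\|_F+\|S_1(\Delta K)\|_F$. The edge cases $k'=1$ or the two-term forms of $\theta_1,\theta_2$ are irrelevant here because $t=0$, so Theorem \ref{thm.perturbdualbasis}-(c) applies uniformly.
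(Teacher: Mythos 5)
Your proposal is correct and follows essentially the same route as the paper: apply Theorem \ref{thm.perturbdualbasis}-(c) to get the perturbed dual basis $\widetilde N(\la)$ with the bound \eqref{eq.dualpert2}, apply Theorem \ref{thm.stronglifications} to $L+\Delta L$ with that basis, decompose $\Delta P=(K+\Delta K)\Delta N^T+\Delta K\,N^T$, and absorb $\|S_1(\Delta K)\|_F/\|S_1(\Delta L)\|_F\le 1$ and $\|S_1(\Delta M)\|_F/\|S_1(\Delta L)\|_F\le 1$ at the end. The only (harmless) difference is that the paper outsources the product estimate with the factor $\min\{\sqrt{k'+1},\sqrt{\ell+1}\}$ to \cite[Lemma 2.16]{blockKron}, whereas you rederive it directly from the identity $S_1(AB)=S_{s+1}(A)S_1(B)$ and Lemma \ref{C}, which is a valid self-contained substitute.
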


\begin{proof} The condition \eqref{eq.1condl-ification} is just \eqref{eq.dualpert1} in Theorem \ref{thm.perturbdualbasis}. Therefore, according to Theorem \ref{thm.perturbdualbasis}-(c,) the submatrix $M(\la) + \Delta M(\la) \in \FF[\la]_\ell^{m \times (m+n)}$ of $L(\la) + \Delta L(\la)$ has full-Sylvester-rank, has parameter $t=0$ (where $t$ and $k'$ are defined as in \eqref{eq.ktprime} with $d$ replaced by $\ell$), i.e., all the row degrees of any minimal basis dual to $M(\la) + \Delta M(\la)$ are equal to $k' = m \ell / n$, and there exists one of such dual minimal bases $N(\la) + \Delta N(\la)$ that satisfies \eqref{eq.dualpert2}. Thus, Theorem \ref{thm.stronglifications} applied to $L(\la) + \Delta L(\la)$ implies that $L(\la) + \Delta L(\la)$ is a strong $\ell$-ification of the polynomial matrix
\begin{equation} \label{eq.polypertl-ification}
P(\la) + \Delta P(\la) = (K(\la) + \Delta K(\la)) \, (N(\la) + \Delta N(\la))^T \,
\end{equation}
and that the relationship between the minimal indices of $L(\la) + \Delta L(\la)$ and $P(\la) + \Delta P(\la)$ is the one in the statement. From \eqref{eq.polypertl-ification}, we get
\begin{equation} \label{eq.2polypertl-ification}
\Delta P(\la) = \Delta K(\la) \, N(\la)^T
+ K(\la) \, \Delta N(\la)^T
+ \Delta K(\la) \, \Delta N(\la)^T \, .
\end{equation}
In the rest of the proof, we bound $\|S_1(\Delta P)\|_F$ for $\Delta P(\la)$ in
\eqref{eq.2polypertl-ification} in order to get \eqref{eq.1finalbound} and \eqref{eq.2finalbound}. To this purpose note that for any polynomial matrix $Q(\la)$ the norm $\| S_1(Q) \|_F$ coincides with the norm $\| Q(\la) \|_F$ introduced in \cite[Definition 2.15]{blockKron}. Therefore, we can use the inequalities in \cite[Lemma 2.16]{blockKron} which, together with \eqref{eq.dualpert2} in the case (c) in Theorem \ref{thm.perturbdualbasis} and the inequality $\|S_1 (\Delta M)\|_2 \leq \|S_1 (\Delta M)\|_F$, lead directly to \eqref{eq.1finalbound} with $C_{P,L}$ replaced by
\begin{align*}
\widehat{C}_{P,L} & = \frac{\|S_1(L)\|_F}{\|S_1(P)\|_F} \, \|S_1(N)\|_F \,
\left( \frac{\|S_1 (\Delta K)\|_F}{\|S_1 (\Delta L)\|_F} \right. \\
& \phantom{=} \left.
+ \frac{2 \, \sqrt{k' +1}}{\sigma_{(k' +1 +\ell)m} (S_{k' +1} (M))} \, \frac{\|S_1 (\Delta M)\|_F}{\|S_1 (\Delta L)\|_F}\, (\|S_1(K)\|_F + \|S_1(\Delta K)\|_F)\, \right) \, .
\end{align*}
Finally, the trivial inequalities $\|S_1 (\Delta K)\|_F /\|S_1 (\Delta L)\|_F \leq 1$ and $\|S_1 (\Delta M)\|_F /\|S_1 (\Delta L)\|_F \leq 1$ prove the result.
\end{proof}

Theorem \ref{thm.pertstronglifications} establishes a general framework to analyze when the solution of the complete polynomial eigenvalue problem for $P(\la)$ (including minimal indices) through the use of a backward stable algorithm applied to any of its strong $\ell$-ifications described in Theorem \ref{thm.stronglifications} is backward stable from the polynomial point of view. Such backward stability will be guaranteed whenever $C_{P,L} \approx 1$, or is at least moderate. Clearly, a detailed analysis of $C_{P,L}$ is only possible for specific strong $\ell$-ifications for which $K(\la)$, $M(\la)$, and $N(\la)$ are known. In particular, to determine lower bounds for $\sigma_{(k' +1 +\ell)m} (S_{k' +1} (M))$ will be a necessary challenging task to study the backward stability of specific strong $\ell$-ifications which is beyond the scope of this paper.

\section{Conclusions} \label{sect.conclusions} This paper introduces a new characterization of those polynomial matrices which are minimal bases in terms of the ranks of a finite number of the Sylvester matrices of the considered polynomial matrix.
This characterization is applied to prove in a rigorous sense that polynomial matrices are generically minimal bases with very specific properties encoded in the concept of polynomial matrices with full-Sylvester-rank, which are carefully studied in this paper. In addition, the new characterization permits us to give a necessary and sufficient condition for a minimal basis to be robust under perturbations, which is just that the leading matrix coefficient has full rank, and to determine finite and simple estimations of the size of its robustness neighborhoods. Such results are particularly interesting in the case of full-Sylvester-rank polynomial matrices, since in this case they allow us to study in a very precise way how any minimal basis dual to a given minimal basis with full-Sylvester-rank changes when the given basis is perturbed. Finally some applications of the results of this paper are discussed with detail. We believe that they are just a very few among many other potential applications of this work.


\bibliographystyle{plain}

\begin{thebibliography}{10}

\bibitem{amparanjointrat}
A. Amparan, F. M. Dopico, S. Marcaida, and I. Zaballa,
\newblock Strong linearizations of rational matrices,
\newblock submitted. Available as MIMS-eprint 2016.51, Manchester Institute for Mathematical Sciences, The University of Manchester, UK, October 2016.

\bibitem{AndJuryIEEE76}
B.D.O.~Anderson and E.I. Jury,
\newblock Generalized Bezoutian and Sylvester matrices in multivariable linear control,
\newblock IEEE Trans. Aut. Contr., Vol.AC-21(4) (1976) 551-556.

\bibitem{antoniou-min-bases}
E. N. Antoniou, A. I. G. Vardulakis, and S. Vologiannidis,
\newblock Numerical computation of minimal polynomial bases: A generalized resultant approach,
\newblock Linear Algebra Appl. 405 (2005) 264--278.


\bibitem{BKAK}
R.R.~Bitmead, S.-Y.~Kung, B.D.O.~Anderson, and T.~Kailath,
\newblock Greatest common divisors via generalized Sylvester and Bezout matrices,
\newblock IEEE Trans. Aut. Contr., Vol.AC-23(6) (1978) 1043-1047.


\bibitem{buenoREU}
M. I. Bueno, F. M. Dopico, J. P\'erez, R. Saavedra, and B. Zykoski,
\newblock A unified approach to Fiedler-like pencils via strong block minimal bases pencils,
\newblock submitted.  Available  as  arXiv  preprint
arXiv:1611.07170v1, 2016.

\bibitem{DDM}
F.~{De~Ter\'{a}n}, F.~M.~Dopico, and D.~S.~Mackey,
\newblock Spectral equivalence of matrix polynomials and the index sum theorem,
\newblock Linear Algebra Appl. 459 (2014) 264--333.

\bibitem{DDMV}
F.~{De~Ter\'{a}n}, F.~M.~Dopico, D.~S.~Mackey, and P.~Van Dooren,
\newblock Polynomial zigzag matrices, dual minimal bases, and the realization of completely singular polynomials,
\newblock Linear Algebra Appl. 488 (2016) 460--504.


\bibitem{DDV}
F.~{De~Ter\'{a}n}, F.~M.~Dopico, and P.~Van Dooren,
\newblock Matrix polynomials with completely prescribed eigenstructure,
\newblock SIAM J. Matrix Anal. Appl. 36 (2015) 302--328.

\bibitem{DDV-l-ifications}
F.~{De~Ter\'{a}n}, F.~M.~Dopico, and P.~Van Dooren,
\newblock Constructing strong $\ell$-ifications from dual minimal bases,
\newblock Linear Algebra Appl. 495 (2016) 344--372.

\bibitem{blockKron}
F. M. Dopico, P. W. Lawrence, J. P\'erez, and P. Van Dooren,
\newblock Block Kronecker linearizations of matrix polynomials and their backward errors,
\newblock submitted. Available as MIMS EPrint 2016.34, Manchester
Institute for Mathematical Sciences, The University of Manchester, UK, May 2016.

\bibitem{BMBelifications}
F. M. Dopico, J. P\'erez, and P. Van Dooren,
\newblock Block minimal bases $\ell$-ifications of polynomial matrices,
\newblock in preparation, 2016.

\bibitem{fassbendersaltenberger}
H. Fassbender and P. Saltenberger,
\newblock Block-Kronecker ansatz spaces for matrix polynomials,
\newblock available as arXiv preprint arXiv:1610.05988.

\bibitem{For75}
G.~D.~Forney,
\newblock Minimal bases of rational vector spaces, with applications to multivariable linear systems,
\newblock SIAM J. Control 13 (1975) 493--520.

\bibitem{Gan59}
F.~R.~Gantmacher,
\newblock The Theory of Matrices, Vol. {I} and {II} (transl.),
\newblock Chelsea, New York, 1959.

\bibitem{GohbergLancasterRodman09}
I.~Gohberg, P.~Lancaster, and L.~Rodman,
\newblock Matrix Polynomials,
\newblock SIAM Publications, 2009.
\newblock Originally published: Academic Press, New York, 1982.

\bibitem{HoJo}
R.A.~Horn and C.R.~Johnson,
\newblock Topics in Matrix Analysis,
\newblock Cambridge University Press, Cambridge 1991.

\bibitem{Kai80}
T.~Kailath,
\newblock Linear Systems,
\newblock Prentice Hall, Englewood Cliffs, NJ, 1980.

\bibitem{lawrence-perez-cheby}
P. W. Lawrence and J. P\'erez,
\newblock Constructing  strong  linearizations  of  matrix  polynomials  expressed in the Chebyshev bases,
\newblock submitted. Available as MIMS EPrint 2016.12, Manchester Institute for Mathematical Sciences, The University of Manchester, UK, 2016.

\bibitem{lawrence-vanbarel-vandooren}
P. W. Lawrence, M. Van Barel, and P. Van Dooren,
\newblock Backward error analysis of polynomial eigenvalue problems solved by linearizations,
\newblock SIAM J. Matrix Anal. Appl. 37 (2016) 123--144.

\bibitem{stewart-sun}
G. W. Stewart and J. G. Sun,
\newblock Matrix  Perturbation  Theory,
\newblock Academic Press, Inc., Boston, MA, 1990.

\bibitem{tissuer-back-err} F. Tisseur,
\newblock Backward error and condition of polynomial eigenvalue problems,
\newblock Linear Algebra Appl. 309 (2000) 339--361.

\bibitem{robol}
L.  Robol,  R.  Vandebril,  and  P.  Van  Dooren,
\newblock A  framework  for  structured  linearizations  of  matrix  polynomials  in  various  bases,
\newblock to appear in SIAM J. Matrix Anal. Appl.  Available  as  arXiv  preprint
arXiv:1603.05773, 2016.


\bibitem{wolovich}
W. A. Wolovich,
\newblock Linear Multivariable Systems,
\newblock Applied Mathematical Sciences Series Vol.11, Springer-Verlag, New York, 1974.


\end{thebibliography}

\end{document}